\theoremstyle{plain}
\newtheorem{theo}{Theorem}[section]
\newtheorem{pro}[theo]{Proposition}
\newtheorem{lemm}[theo]{Lemma}
\newtheorem{cor}[theo]{Corollary}
\newtheorem{rem}[theo]{Remark}
\newtheorem{deff}[theo]{Definition}
\newcommand{\avert}[1]{-\hskip-0.41cm\int_{#1}}
\numberwithin{equation}{section}
\begin{document} 
\allowdisplaybreaks

\title[$L^p$ estimates for magnetic Schr\"odinger operators]{Maximal inequalities and Riesz transform estimates on $L^p$ spaces for magnetic Schr\"odinger operators I}
\author{Besma Ben Ali}
\address{ B. Ben Ali
\\
Université de Paris-Sud, UMR du  CNRS  8628
\\
91405 Orsay Cedex, France} \email{besma.ben-ali@math.u-psud.fr}
\subjclass[2000]{35J10; 42B20; 58G25; 81Q10; 81V10}
\keywords
{Schr\"odinger operators, maximal inequalities, Riesz transforms, Fefferman-Phong inequality, reverse H\"older estimates, magnetic field}

\begin{abstract}
The paper concerns the  magnetic Schr\"odinger operator $H(\textbf{a},V)=\sum_{j=1}^{n}(\frac{1}{i}\frac{\partial}{\partial
x_{j}}-a_{j})^{2}+V $ on $\mathbb{R}^n$. Under certain conditions, given in terms of the reverse H\"older inequality on the magnetic field and the electric potential, we prove some $L^p$ estimates on the Riesz transforms of $H$ and we establish some related maximal inequalities.

\end{abstract}
\date{}
\maketitle

\begin{quote}
{\tableofcontents}
\end{quote}
\vline

\section{Introduction}
Consider the Schr\"odinger operator with magnetic field
\begin{equation}\label{def}H(\textbf{a},V)=\sum_{j=1}^{n}(\frac{1}{i}\frac{\partial}{\partial
x_{j}}-a_{j})^{2}+V \, \textrm{in}\, \, \mathbb{R}^{n},\qquad
n\geq 2,
 \end{equation}
  where $\textbf{a}=(a_{1},a_{2},\cdots,a_{n}):
\mathbb{R}^{n}\rightarrow \mathbb{R}^{n}$ is the magnetic potential and $V:\mathbb{R}^{n}\rightarrow \mathbb{R}$ is the electric potential.
Let \begin{equation}B(x) = curl\, \textbf{a}(x)= (b_{jk}(x))_{1\leq j,k\leq n}\end{equation}
 be the magnetic field generated by $\textbf{a}$, where
 \begin{equation}
 b_{jk} = \frac{\partial a_{j}}{\partial x_{k}}- \frac{\partial a_{k}}{\partial x_{j}}.
 \end{equation} 
 We will assume that $\textbf{a}\in L^{2}_{loc}(\mathbb{R}^n)^n$ and $V\in L^{1}_{loc}(\mathbb{R}^n), \,\, V\geq 0$.
Let
 \begin{equation} L_{j}=
\frac{1}{i}\frac{\partial}{\partial x_{j}}-a_{j} \qquad
\textrm{for}\qquad 1\leq j\leq n, 
 \end{equation} 
  %$$ L^{\star}_{j}=-\frac{1}{i}\frac{\partial}{\partial x_{j}}-a_{j} \qquad\textrm{pour}\qquad 1\leq j\leq n,$$
 Set $L=(L_{1},\ldots, L_{n})$ and $
|L u(x)|=\big{(} \sum_{j=1}^{n} |L_{j}u(x)|^{2}\big{)}^{1/2}.$
 
Note that $ L^{\star}_{j}=L_{j}$ for all $1\leq j\leq n,$ and let
$$L^{\star}=(L^{\star}_{1},\ldots, L^{\star}_{n})^{T}.$$%$$=\bordermatrix{L_{1}\cr       . \cr   .         \cr       .  \cr         L_{n} \cr}.$$
We define the form $\mathcal{Q}$ by
\begin{equation}\mathcal{Q}(u,v)= \sum_{k=1}^{n} \int_{\mathbb{R}^n}
L_{k} u . \overline{L_{k} v} dx+\int_{\mathbb{R}^n}V  u . \bar{ v} dx ,
\end{equation} 
with domain $\mathcal{D}(\mathcal{Q})=\mathcal{V}\times \mathcal{V}$ where $$\mathcal{V} = \{u\in
L^{2}, L_{k}u \in L^{2} \,\, \textrm{for } k=1,\ldots,n \,\,\textrm{and}\, \,  \sqrt{V} u \in L^{2}\}.$$
 We denote $H(\textbf{a},V)=H$, the self-adjoint operator on $L^2(\mathbb{R}^n)$ associated to this symmetric and closed form.
 
 %L'inégalité (Kato-Simon):
 %\begin{equation}\label{eq:KS}| (H(\textbf{a},V)+\lambda)^{-1} f| \leq (-\Delta+ \lambda)^{-1}|f|; \qquad \forall f\in L^{2}(\mathbb{R}^n),\,\forall \lambda >0,
 %\end{equation}
 %permet de montrer que $H(\textbf{a},V)$ est l'extension auto-adjointe sur $L^{2}(\mathbb{R}^{n})$ de l'opérateur donné par \ref{def} sur $C_{0}^{\infty}(\mathbb{R}^n)$. Il n'y a donc pas d'ambiguité à utiliser la m\^eme notation.
 %voir avron/herbst/simon% \\
  The domain of $H $ is given by:
 $$\mathcal{D}(H)=\{u \in \mathcal{D}(\mathcal{Q}), \exists v\in L^{2} \,\, \textrm{so that}\,\, \mathcal{Q}(u,\phi)=\int_{\mathbb{R}^n} v\bar{\phi} dx, \, \forall \phi \in \mathcal{D}(\mathcal {Q}) \}.$$

 The operators $L_{j} H(\textbf{a},V)^{-1/2}$ are called the Riesz transforms associated with $H(\textbf{a},V)$.
 We know that
 \begin{equation}\label{LL2} 
 \sum^{n}_{j=1}\| L_{j} u\|^{2}_{2}+ \|V^{1/2}u\|^{2}_{2} = \| H(\textbf{a},V)^{1/2} u\|^{2}_{2}, \qquad \forall u \in \mathcal{D}(\mathcal{Q}) =\mathcal{D}(H(\textbf{a},V)^{1/2}).
 \end{equation}
 Hence, the operators $L_{j} H(\textbf{a},V)^{-1/2}$ are bounded on $L^{2}(\mathbb{R}^{n})$, for all $j=1,\ldots,n$.
 %RESULTATS ??? SHEN/ OUHABAZ%

 The aim of this paper is to establish the $L^p$  boundedness of the operators $L_{j} H(\textbf{a},V)^{-1/2}$ and $V^{\frac{1}{2}}H(\textbf{a},V)^{-\frac{1}{2}}$. % sans champ magnétique, a été le sujet de plusieurs  travaux: on peut citer [HN], [GB] et [Zh].
  In the presence of the magnetic field, the only known result is that these operators are of weak type (1.1) and hence, by interpolation, are  $L ^p$ bounded for all $ 1 <p \leq  2$.
This result was proved by Sikora using  the finite speed propagation property \cite{Sik}. Independantly, Duong, Ouhabaz and Yan have proved the same result using another method.

Many authors have been interested in the study of the Riesz transforms of $H(\textbf{a},V)$ in the case when the magnetic potential $\textbf{a}$ is absent, i.e $LH(\textbf{a},V)^{-\frac{1}{2}}=\nabla(-\Delta+V)^{-\frac{1}{2}}$. We mention the works of Helffer-Nourrigat \cite{HNW}, Guibourg\cite{Gui2} and Zhong \cite{Z}, in which they considered the case of polynomial  potentials. A generalization of their results was given by Shen \cite{Sh1}, he proved the $L^p$ boundedness of Riesz transforms of Schr\"odinger operators with electric potential contained in certain reverse H\"older classes. Auscher and I improved this result in \cite{AB}, using a different approach based on local estimates. Note that this approach can be extended to more general spaces for instance some Riemannian manifolds and Lie groups( see \cite{BB}).
The main purpose of this work is to find some sufficient conditions on the electric potential and the magnetic field, for which the Riesz transforms of $H(\textbf{a},V)$ are $L^p$ bounded for a range $p>2$. Many arguments follow those of \cite{AB}, the contribution of the magnetic field will be controlled by introducing an auxiliary function $m(.,|B|)$.

 Note that, because of the gauge invariance of the operator $H(\textbf{a},V)$ and the nature of the $L^{p}$ estimates, any quantitative condition should be imposed on magnetic field $B$ , not directly on $a$.

 This article also aims to establish some maximal inequalities related to the $L^p$ behaviour of $L_{j}L_{k} H(\textbf{a},V)^{-1}$, $V H(\textbf{a},V)^{-1}$ and other operators called the second order Riesz transforms. The only known result for a range $p>2$ is given by Shen in \cite{Sh4}. He generalized the $L^2$ estimate proved by Guibourg in \cite{Gui1} for polynomial potentials. Estimates on these operators are of great interest in the study of spectral theory of $H(\textbf{a},V)$. 
 In this paper our assumptions on potentials will be given in terms of reverse H\"older inequality. Let recall the definition of these weight classes:
  
 \begin{deff}
 Let $\omega \in L^q_{loc}(\mathbb{R}^n)$,  $\omega>0$ almost everywhere, $\omega\in RH_{q}$, $1<q\le\infty$, the class of the reverse H\"older weights with exponent $q$, if there exists a constant $C$ such that  for any cube $Q$ of $\mathbb{R}^n$,

 \begin{equation}\label{2RHclass}
 \Big{(} \avert{Q} {\omega^{q}}(x)dx \Big{)}^{1/q}\leq C\Big{(}\avert{Q} \omega (x)dx \Big{)}.
 \end{equation}
If $q=\infty$, then the left hand side is the essential supremum on $Q$. 
The smallest $C$ is called the $RH_{q}$ constant of $\omega$.
 \end{deff}
\textit{A note about notations:}
Throughout this paper we will use the following notation $\avert {Q}\omega=\frac{1}{|Q|}\int_{Q}\omega.$
$C$ and $c$ denote constants. As usual, $\lambda Q$ is the cube co-centered with $Q$ with sidelength $\lambda$ times that of $Q$.

We give the definition of an auxiliary function introduced by Shen in \cite{Sh1}
 \begin{deff}
 
 Let $\omega\in L^{1}_{loc}(\mathbb{R}^{n})$, $\omega\geq 0$, for $x\in \mathbb{R}^n$, the function $m(x,\omega)$ is defined by:
 \begin{equation} \frac{1}{m(x,\omega)}= \sup \left\{ r>0: \frac{r^2}{| Q(x,r)|} \int_{Q(x,r)} \omega(y) dy \leq 1\right\}.
 \end{equation}
 \end{deff}
  
 %\begin{rem}Dans le cas where $w\in RH_{\infty}$, $\exists C>0, w(x)\leq C\,m(x,w), \forall x\in \mathbb{R}^{n}.$
 %\end{rem}
 
 We now state our main result :
 \begin{theo}\label{th:1a} Let $\textbf{a}\in L^{2}_{loc}(\mathbb{R}^{n})^{n}$. Also assume the following conditions \begin{equation}\label{eq:shen}
 \left\{\begin{array}{ll}
 | B |  \in RH_{n/2}\\ | \nabla B |\leq c\, m(.,| B|)^{3},
  \end{array}
  \right.
 \end{equation}
 where $| B |=\sum_{j,k} | b_{jk} |$ and $\nabla=(\frac{\partial}{\partial x_{1}},\ldots,\frac{\partial}{\partial x_{n}})$ . Then, for all $1<p<\infty$, there exists a constant $C_{p}>0$, such that
 \begin{equation}
 \label{eq:Lp}  \| LH(\textbf{a},0)^{-1/2}(f)\|_{p}\leq C_{p} \| f\|_{p},
 \end{equation}
 for any $f\in C_{0}^{\infty}(\mathbb{R}^{n})$,\\ and
 $$|\{x\in \mathbb{R}^n\, ; \,  |L f(x)| > \alpha\}| \le \frac{C_{1}}{\alpha} \|H(\textbf{a},0)^{1/2}f\|_{1}.$$
 for $\alpha>0$ and all $f\in C_{0}^{\infty}(\mathbb{R}^{n})$ if $p=1$.

 \end{theo}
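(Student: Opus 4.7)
The range $1 < p \leq 2$ is already covered: the weak-type $(1,1)$ bound of Sikora \cite{Sik} combined with the $L^{2}$ identity \eqref{LL2} gives it by Marcinkiewicz interpolation. So my plan focuses on $p > 2$, following the local-estimates strategy of \cite{AB}, with the magnetic terms handled through the critical scale $\rho(x) = 1/m(x,|B|)$.

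The strategy is to reduce the $L^{p}$ bound for the sublinear operator $T = L H(\textbf{a},0)^{-1/2}$ to the following \emph{local reverse Hölder inequality}: for every cube $Q \subset \mathbb{R}^{n}$ and every (weak) solution $u$ of $Hu = 0$ on a suitable dilate of $Q$,
$$\Big(\aver{Q}|Lu|^{q}\Big)^{1/q} \leq C \Big(\aver{2Q}|Lu|^{2}\Big)^{1/2}$$
for every $q < \infty$. Combined with the $L^{2}$ boundedness coming from \eqref{LL2}, this is exactly what feeds a Shen-type good-$\lambda$ machinery (or the Calderón-Zygmund framework of Auscher-Coulhon-Duong-Hofmann) to upgrade $L^{2}$ to $L^{p}$ for every $2 < p < \infty$. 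The hypothesis $|B| \in RH_{n/2}$ plays two complementary roles: it provides the Fefferman-Phong lower bound controlling $\|V^{1/2} u\|_{2}^{2}$ by averages of $|B|\, |u|^{2}$, and it guarantees that $m(\cdot,|B|)$ is slowly varying, so that the whole space can be covered by cubes at the critical scale with uniformly controlled overlap.

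The analytic heart of the argument is the reverse Hölder bound above, which I would prove by splitting according to the critical scale. On cubes $Q(x_{0},r)$ with $r \leq \rho(x_{0})$ the magnetic field is a bounded perturbation of the flat Laplacian at this scale; Caccioppoli estimates for $Lu$ followed by Moser iteration give $L^{q}$--$L^{2}$ reverse Hölder, provided one can close the estimates against the commutator errors. On larger cubes, the slow variation of $m(\cdot,|B|)$ reduces matters to a covering by critical cubes together with a Fefferman-Phong inequality controlling $|Lu|$ by $m(\cdot,|B|)\, |u|$.

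The main obstacle is the commutator bookkeeping in the small-scale case. Since $[L_{j},L_{k}] = -i b_{jk}$, applying $L_{j}$ to $Hu = 0$ produces an equation for $Lu$ with a first-order coefficient proportional to $B$ and a zeroth-order coefficient proportional to $\nabla B$. To push the resulting subsolution-type inequality for $|Lu|^{2}$ through Moser iteration, both error terms must be subcritical at the scale $\rho$: the natural scaling forces a first-order coefficient to be $O(m^{2})$ and a zeroth-order one to be $O(m^{3})$, which is precisely what the hypothesis $|\nabla B| \leq c\, m(\cdot,|B|)^{3}$ (together with $|B| \in RH_{n/2}$, which controls $|B|$ by $m^{2}$ in averaged sense) is designed to give. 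Verifying that the resulting lower-order perturbations can be genuinely absorbed after iteration, uniformly in $x_{0}$, is where the bulk of the technical work lies; everything else is then a standard combination of the Auscher-Ben Ali extrapolation with the covering argument provided by the $RH_{n/2}$ assumption.
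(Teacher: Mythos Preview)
Your outline has two genuine gaps.

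\textbf{The reverse H\"older inequality you aim for is too strong.} Differentiating $Hu=0$ via $[L_{k},H_{0}]=b_{kj}L_{j}-\partial_{j}b_{kj}$ produces a zeroth-order term in $u$, so any iteration of the type you sketch leaves a contribution from $|u|$ on the right that cannot be absorbed into $|Lu|^{2}$ alone. The inequality the paper actually proves (Lemma~\ref{th:RH1}) is
\[
\Big(\avert{Q}|Lu|^{q}\Big)^{1/q}\le C\Big(\avert{3Q}|Lu|^{2}+|m(\cdot,|B|)\,u|^{2}\Big)^{1/2},
\]
and the extra term $m(\cdot,|B|)\,u$ is not a technicality one can hope to drop. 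Your remark that on large cubes Fefferman--Phong ``controls $|Lu|$ by $m(\cdot,|B|)|u|$'' goes the wrong way and would not help even if reversed: what one actually gets is $R^{-2}\int|u|^{2}\lesssim\int|Lu|^{2}+|B||u|^{2}$, which again reintroduces a lower-order term.

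\textbf{The Shen/ACDH criterion cannot be applied directly to $T=LH^{-1/2}$.} For $f$ supported away from $4Q$, the function $H^{-1/2}f$ is \emph{not} a weak solution of $Hu=0$ on $4Q$, so the local reverse H\"older for solutions does not plug into the criterion for this $T$. The paper handles this by a reduction (Lemma~\ref{red}): the $L^{p}$ boundedness of $LH^{-1/2}$ is shown to be equivalent to that of $LH^{-1}L^{\star}$ and $LH^{-1}m(\cdot,|B|)$, operators for which $u=H^{-1}L^{\star}\textbf{F}$ (resp.\ $H^{-1}m(\cdot,|B|)f$) \emph{is} a local solution away from the support. Feeding Lemma~\ref{th:RH1} into Theorem~\ref{2theor:shen} then works, with the auxiliary operator $S\textbf{F}=\big(M(|m(\cdot,|B|)H^{-1}L^{\star}\textbf{F}|^{2})\big)^{1/2}$ absorbing the extra term; its $L^{p}$ boundedness is Theorem~\ref{th:mH}. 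The reduction itself relies on the reverse estimate $\|H^{1/2}f\|_{p}\lesssim\|Lf\|_{p}+\||B|^{1/2}f\|_{p}$ (Theorem~\ref{th:1b}), whose proof requires a Calder\'on--Zygmund decomposition adapted to the magnetic setting via the Iwatsuka gauge transform (Lemma~\ref{th:CZ1}). None of this machinery appears in your plan, and without it the argument does not close.
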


The conditions \eqref{eq:shen}, which are dilation invariant, are used by Shen in \cite{Sh4} to study the operators $ L_{j}L_{k} H (a, V) ^{-1} $.
Note that these conditions mean that the value of $|B|$ do not fluctuate too much on the average and $|\nabla B|$ is uniformly bounded in the scale ${m(x, |B|)}^{-1}$. It is clear that the hypothesis of Theorem \ref{th:1a} is satisfied if the magnetic potentials $a_{j}(x)$ are polynomials. 
  	
Once the estimates for the pure magnetic Schr\"odinger operator  $ H (\textbf{a}, 0) $ is established, we will proceed onto the second part of our work. We then add the positive electric potential  $ V \in RH_q $, with $ q> $ 1, while keeping the same conditions on $B $ and get the following theorem:% On montre qu'il existe $\epsilon>0$, tel que $\forall j=1,\ldots,n$, les opérateurs $L_{j}H(\textbf{a},0)^{-1/2}$ sont $L^p$ bornés pour $1<p< q^{\star}+\epsilon$, et faiblement bornés (1,1).
  \begin{theo}\label{th:2a} Let $\textbf{a}\in L^{2}_{loc}(\mathbb{R}^{n})^{n}$, $V\in RH_{q}$, $1<q\leq \infty$. Also assume that the magnetic field $B$ satisfies the conditions \eqref{eq:shen}. 
 
  Then, there exists an $ \epsilon>0$ depending on the reverse H\"older constant $RH_q$ of $V$, such that, for every $1<p< sup(2q,q^{\star})+\epsilon$, there exists a constant $C_{p}>0$, such that
 \begin{equation}\label{Lp}\| LH(\textbf{a},V)^{-1/2}(f)\|_{p}\leq C_{p} \| f\|_{p},
 \end{equation}
 for any $f\in C_{0}^{\infty}(\mathbb{R}^{n}).$
 Here, $q^*= qn/(n-q)$ is the Sobolev exponent of $q$ if $q<n$, and $q^{\star}=\infty$ if $q\geq n$.

 \end{theo}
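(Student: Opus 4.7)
The strategy adapts the local-estimates approach of \cite{AB} to the magnetic setting, taking Theorem \ref{th:1a} as a black box for the magnetic contribution and inserting the electric potential $V\in RH_q$ through Shen's $L^p$ extrapolation lemma. The identity \eqref{LL2} already gives the $L^2$ boundedness of $T := LH(\textbf{a},V)^{-1/2}$, so it is enough to prove a weak reverse H\"older estimate for $Tf$ off cubes; Shen's lemma (as used in \cite{AB}) then delivers the $L^p$ bound in the stated range automatically, and interpolation with the weak $(1,1)$ bound of Sikora and of Duong--Ouhabaz--Yan recalled in the introduction fills in $1<p\leq 2$.

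The heart of the argument is the following local reverse H\"older inequality for weak solutions of the equation: if $H(\textbf{a},V)u=0$ weakly on $2Q$, then for some exponent $p_{0}\in \bigl(2,\ \sup(2q, q^{\star})\bigr)$ and some constant $C$ independent of $Q$,
\begin{equation}
\Big(\avert{Q} |Lu|^{p_{0}}\,dx\Big)^{1/p_{0}} \leq C \Big(\avert{2Q} \bigl(|Lu|^{2}+V|u|^{2}\bigr)\,dx\Big)^{1/2}.
\end{equation}
I would first establish a Caccioppoli-type inequality by testing the equation against $\eta^{2}u$ for a cut-off $\eta$ supported in $2Q$, which gives $\int (|Lu|^{2}+V|u|^{2})\eta^{2} \leq C\int |u|^{2} |\nabla \eta|^{2}$. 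Gehring's lemma then self-improves the integrability of $|Lu|^{2}$; the $RH_{q}$ hypothesis on $V$ pushes the exponent up to roughly $2q$, while the Fefferman--Phong inequality for $V$ combined with Sobolev embedding produces the competing exponent $q^{\star}$. The $\epsilon$ in the statement is precisely the Gehring self-improvement gain, depending only on $n$, $q$, and the $RH_{q}$ constant of $V$.

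The magnetic field enters through the commutator $[L_{j},L_{k}]=-i\,b_{jk}$, which generates a term of the form $\int b_{jk}\,u\,\bar{u}\,\eta^{2}$ when integrating by parts in the Caccioppoli step. The hypothesis $|B|\in RH_{n/2}$, together with the scale condition $|\nabla B|\leq c\,m(\cdot,|B|)^{3}$ and the auxiliary function $m(\cdot,|B|)$, allow this term to be absorbed into $\int |Lu|^{2}\eta^{2}$ on a slightly enlarged cube---this is precisely the mechanism already exploited in the proof of Theorem \ref{th:1a}. The diamagnetic inequality $|Lu|\geq |\nabla|u||$ provides the bridge from the magnetic setting to the non-magnetic Fefferman--Phong machinery of \cite{AB}, so that the electric and magnetic contributions can be combined cleanly in the self-improvement step.

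The main obstacle is carrying out the Gehring self-improvement uniformly in $Q$ while juggling the two natural scales $m(\cdot, V)^{-1}$ and $m(\cdot, |B|)^{-1}$. On cubes where one scale is much smaller than the other, one of the two potentials looks essentially constant while the other fluctuates, and the two regimes require separate comparison arguments. This is exactly the role of the rigid hypothesis \eqref{eq:shen}: it forces $|B|$ to behave like a smooth weight at scale $m(\cdot,|B|)^{-1}$, which makes the magnetic contribution compatible with the electric perturbation scheme of \cite{AB} and lets the reverse H\"older inequality be obtained with constants independent of $Q$.
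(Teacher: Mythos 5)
Your high-level skeleton is right — local reverse H\"older estimates for weak solutions of $H(\textbf{a},V)u=0$, Shen's $L^p$ criterion (Theorem \ref{2theor:shen}), and interpolation with the known weak $(1,1)$ bound for $p\le 2$ — but several of the key mechanisms you propose are either incorrect or insufficient. First, the local estimate you write, $\big(\avert{Q}|Lu|^{p_0}\big)^{1/p_0}\le C\big(\avert{2Q}(|Lu|^2+V|u|^2)\big)^{1/2}$, cannot hold as stated: the magnetic contribution does not vanish on the right-hand side, and the paper's estimates (Propositions \ref{lemm:5} and \ref{lemm:6}) necessarily carry an additional $|m(\cdot,|B|)\,u|^2$ term. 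Controlling that extra term is a nontrivial part of the argument (Theorem \ref{th:mmh} and Corollary \ref{cormu}); without it, the inequality fails on cubes where $R^2\avert{Q}|B|$ is large. Second, Gehring self-improvement only buys a small $\epsilon$ gain on an exponent you already have; it cannot bridge from $p=2$ all the way to $\sup(2q,q^\star)$. In the paper those exponents arise from two distinct mechanisms: the $2q$ comes from the weighted mean-value inequality for nonnegative subharmonic functions against $RH_q$ weights (Lemma \ref{2th:sousharm}) applied to $|u|^2$ and $V$, and the $q^\star$ comes from a Hardy--Littlewood--Sobolev bound applied to the representation of $Lw$ against the fundamental solution kernel $\Gamma_0$ (with $u=v+w$, $v$ a solution of the pure magnetic problem on $2Q$). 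The $\epsilon$ in the statement is indeed self-improvement of $RH_q$, but it is a cosmetic last step, not the engine.

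Third, there is no commutator term $\int b_{jk}u\bar u\,\eta^2$ in the Caccioppoli computation: testing $H(\textbf{a},V)u=0$ against $\eta^2\bar u$ produces only $\int|Lu|^2\eta^2+\int V|u|^2\eta^2=2\,\mathrm{Re}\,\frac{1}{i}\sum_j\int(L_ju)(\partial_j\eta)\eta\,\bar u$; the commutator $[L_j,L_k]=-ib_{jk}$ is relevant for the second-order maximal estimates (Theorem \ref{Maxshen}), not here. The magnetic field instead enters through the gauge transform in the Calder\'on--Zygmund decomposition (Lemma \ref{th:CZ1}) and through the auxiliary weight $m(\cdot,|B|)$. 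Finally, you skip the reduction that makes the weak-solution estimate usable at all: $u=H^{-1/2}f$ is not a weak solution away from the support of $f$, so one must replace $T=LH^{-1/2}$ by $LH^{-1}L^\star$ and $LH^{-1}V^{1/2}$ via duality, and this reduction requires the reverse inequality $\|H^{1/2}f\|_p\lesssim \|Lf\|_p+\||B|^{1/2}f\|_p+\|V^{1/2}f\|_p$ of Theorem \ref{th:2b}, itself obtained from the CZ decomposition. Without this step, the reverse H\"older estimate for weak solutions never connects to $\|LH^{-1/2}f\|_p$.
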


Taking into account the conditions on the electric potential, and persuing step-by-step the proof of Theorem \ref{th:1a}, we get the following result
 \begin{theo}\label{Lpshen} %Sous les conditions \eqref{eq:shen} du théorème \ref{th:1a},
Let $\textbf{a}\in L^{2}_{loc}(\mathbb{R}^{n})^{n}$, $V\in L^{1}_{loc}(\mathbb{R}^{n})$ and $V\geq 0$ a.e on  $\mathbb{R}^{n}$. Also assume that there exist two positive constants $c>0$ and $C>0$ such that:
 \begin{equation}\label{eq:Shen1}
 \left\{\begin{array}{ll}
 | B | + V \in RH_{n/2},\\V\leq C\, {m(.,|B|+V)}^{2},\\ | \nabla B |\leq c \,m(.,| B|+V)^{3}.
  \end{array}
  \right.
 \end{equation}
Then \eqref{Lp} is satisfied for all $1<p<\infty$. 
 \end{theo}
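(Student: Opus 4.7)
The strategy is to repeat the proof of Theorem~\ref{th:1a} line by line, everywhere replacing the auxiliary function $m(\cdot,|B|)$ by the new weight function $m(\cdot,|B|+V)$, and to verify that the extra electric potential contributes only terms of the same order as those already absorbed in the pure magnetic case. Structurally, the first and third conditions in \eqref{eq:Shen1} are the exact analogues of \eqref{eq:shen} read with $\omega=|B|+V$ in place of $|B|$: we have $\omega\in RH_{n/2}$ and $|\nabla B|\leq c\,m(\cdot,\omega)^{3}$. Hence all the properties of the auxiliary function that were exploited in Theorem~\ref{th:1a} (slow variation, doubling, Fefferman--Phong type inequalities at the critical scale $1/m(\cdot,\omega)$) remain available without change. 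The role of the new subcriticality condition $V\leq C\,m(\cdot,\omega)^{2}$ is precisely to make the extra potential harmless on every natural ball.

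The first step is to transfer all the local a priori estimates that underlie the proof of Theorem~\ref{th:1a}. For a cube $Q$ of sidelength $r\lesssim 1/m(x_Q,\omega)$ and $u$ solving $H(\mathbf{a},V)u=f$ on $2Q$, the integration by parts that yielded the magnetic Caccioppoli inequality now produces an additional term $\int V|u|^{2}$. By the subcriticality condition,
\[
\int_Q V|u|^{2}\,dx\leq C\,m(x_Q,\omega)^{2}\int_Q|u|^{2}\,dx\leq \frac{C}{r^{2}}\int_Q|u|^{2}\,dx,
\]
which has exactly the same scaling as the remainder term already present in the magnetic Caccioppoli estimate, and is therefore absorbed without changing constants. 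The same remark applies to the weak reverse H\"older inequality for $|Lu|$ and to the self-improvement from $L^{2}$ to $L^{p}$ carried out at the critical scale.

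Once these local estimates are in place, the rest of the argument is a direct transcription of the proof of Theorem~\ref{th:1a}. The good-$\lambda$ / sharp maximal function machinery of \cite{AB}, combined with the Fefferman--Phong inequality applied to $\omega=|B|+V$ and the control of $|\nabla B|$ by $m(\cdot,\omega)^{3}$, yields the $L^{p}$ boundedness of $LH(\mathbf{a},V)^{-1/2}$ for every $p>2$. The range $1<p\leq 2$ then follows, as recalled in the introduction, by interpolating the trivial $L^{2}$ bound coming from \eqref{LL2} with the weak-type $(1,1)$ estimate of Sikora (and Duong--Ouhabaz--Yan), both of which hold in the full generality $\mathbf{a}\in L^{2}_{loc}$, $V\in L^{1}_{loc}$, $V\geq 0$.

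The main obstacle is the first step: checking that every integration by parts in the proof of Theorem~\ref{th:1a} continues to work when the operator is $H(\mathbf{a},V)$ rather than $H(\mathbf{a},0)$, i.e., that the pointwise bound $V\leq C\,m(\cdot,|B|+V)^{2}$ is strong enough to turn each new $V|u|^{2}$ term into a harmless Caccioppoli remainder at the natural scale. Since the entire scheme of Theorem~\ref{th:1a} is organised around the auxiliary function, this passage from $m(\cdot,|B|)$ to $m(\cdot,|B|+V)$ is the only non-routine point; everything else is bookkeeping.
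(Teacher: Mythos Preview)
Your proposal is correct and matches the paper's own treatment: the paper does not give a separate detailed proof of Theorem~\ref{Lpshen} but states explicitly that it follows by ``persuing step-by-step the proof of Theorem~\ref{th:1a}'' with the weight $|B|$ replaced by $|B|+V$, and at the beginning of Section~4 reiterates that under \eqref{eq:Shen1} ``the approach to study the Riesz transforms will be identical, all we have to do is to replace the weight function $|B|$ by $V+|B|$.'' Your identification of the subcriticality bound $V\le C\,m(\cdot,|B|+V)^{2}$ as the mechanism making the additional $V|u|^{2}$ terms harmless at the natural scale is exactly the point, and the Shen estimates from \cite{Sh4} that underlie Lemmas~\ref{lemm:sol} and~\ref{lemm 2.3} are in fact proved there under the full hypothesis \eqref{eq:Shen1}, so the weak-solution reverse H\"older step goes through unchanged.
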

 	
The following three results will be useful to prove Theorem \ref{th:1a} and Theorem \ref{th:2a}. The first describes reverse inequalities of \eqref{Lp}. %(and au corollaire \ref{2th:VH2}) 
  
   \begin{theo}\label{th:2b}Let $V\in A_{\infty}$ or $V=0$, $\textbf{a}\in
L^{2}_{loc}(\mathbb{R}^{n})^{n}$ and $| B |  \in RH_{n/2}$.

 Then, for all $1\leq p<\infty$, there exists a constant $C_{p}>0$ depending only on the $RH_{\frac{n}{2}}$ constant of  $|B|$, such that \begin{equation}\label{eq:rev Lp'}\|
H(\textbf{a},V)^{1/2}(f)\|_{p}\leq C_{p}\{\| Lf\|_{p}+ \| |B|^{1/2}\,f\|_{p} +\| V^{1/2}\,f\|_{p} \}
\end{equation}  for any $f\in C_{0}^{\infty}(\mathbb{R}^{n})\,\textrm{if}\,\, p>1,$\\ and
 \begin{equation}\label{eq:rev L1'}|\{x\in \mathbb{R}^n\, ; \,  |H(\textbf{a},V)^{1/2} f(x)| > \alpha\}| \le \frac{C_1}{\alpha} \int |L f| + | |B|^{1/2}\,f|+|V^{1/2}\,f|,
 \end{equation}
for all $\alpha>0$ and  $f\in C_{0}^{\infty}(\mathbb{R}^{n})\,\textrm{if}\,\, p=1.$

 \end{theo}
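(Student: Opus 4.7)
The plan is to split the argument into three ranges of $p$ and combine the form identity, duality, and a Calder\'on--Zygmund argument for the weak-type bound.

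At $p=2$ the inequality is immediate from the form identity \eqref{LL2}: it reads $\|H^{1/2}f\|_2^2=\sum_j\|L_jf\|_2^2+\|V^{1/2}f\|_2^2$, so $\|H^{1/2}f\|_2\le\|Lf\|_2+\|V^{1/2}f\|_2$. Note that no magnetic term is needed here.

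For $2<p<\infty$ I would argue by duality. Self-adjointness of $L_j$ and $H$, together with $H=\sum_jL_j^2+V$, yield for $g\in C_0^\infty$ the identity
\begin{equation*}
\langle H^{1/2}f,g\rangle=\langle Hf,H^{-1/2}g\rangle=\sum_j\langle L_jf,L_jH^{-1/2}g\rangle+\langle V^{1/2}f,V^{1/2}H^{-1/2}g\rangle.
\end{equation*}
Sikora's finite-speed-of-propagation result \cite{Sik}, combined with \eqref{LL2}, provides weak-type $(1,1)$, and hence by interpolation $L^{p'}$-boundedness, of both $L_jH^{-1/2}$ and $V^{1/2}H^{-1/2}$ for every $p'\in(1,2]$. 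H\"older and taking $\sup_{\|g\|_{p'}\le 1}$ then deliver \eqref{eq:rev Lp'} throughout $p\in[2,\infty)$, without invoking $|B|^{1/2}$.

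The range $p\in[1,2)$ is where duality fails and where the magnetic correction $\||B|^{1/2}f\|_p$ enters genuinely. I would prove the weak-type (1,1) bound \eqref{eq:rev L1'} directly, and then obtain the strong $L^p$ bound for $1<p<2$ by a good-$\lambda$/Marcinkiewicz-type interpolation against the $p=2$ case. Setting $\psi=|Lf|+|B|^{1/2}|f|+V^{1/2}|f|$, one performs a Calder\'on--Zygmund decomposition $f=g+\sum_ib_i$ at level $\alpha$ with the stopping cubes $Q_i$ chosen from $\{M\psi>\alpha\}$. The good part is handled via Chebyshev at $p=2$ and the form identity above. For each bad piece $b_i$, apply the subordination formula $H^{1/2}=c\int_0^\infty(I-e^{-tH})t^{-3/2}\,dt$, split the $t$-integral at the scale $\ell(Q_i)^2$, and exploit Gaussian off-diagonal bounds for $e^{-tH}$ (valid in the present generality $V\ge 0$, $\textbf{a}\in L^{2}_{loc}$ via the diamagnetic inequality). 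The commutation relation $[L_j,L_k]=i^{-1}b_{jk}$ is precisely what forces an $|B|^{1/2}$ factor to appear whenever an $L_j$ is commuted through the heat kernel, and the hypothesis $|B|\in RH_{n/2}$ (respectively $V\in A_\infty$) feeds a Fefferman--Phong type estimate that makes the resulting sum $O(\alpha^{-1}\|\psi\|_1)$.

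The main obstacle is this third step: orchestrating the Calder\'on--Zygmund decomposition in the magnetic setting and carefully tracking the commutator errors so that each is absorbed either by $\||B|^{1/2}f\|_1$ or by $\|V^{1/2}f\|_1$ on the right-hand side. The other two ranges reduce to classical ingredients once Sikora's Riesz transform bounds are invoked.
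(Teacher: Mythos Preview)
Your treatment of $p\ge 2$ is correct and coincides with the paper's strategy (which follows \cite{AB}): the form identity handles $p=2$, and for $p>2$ the duality argument using the $L^{p'}$ boundedness of $L_jH^{-1/2}$ and $V^{1/2}H^{-1/2}$ for $1<p'\le 2$ (from \cite{Sik}, \cite{DOY}) gives \eqref{eq:rev Lp'} without any magnetic correction.

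The gap is in the weak-type $(1,1)$ step. Your outline ``perform a Calder\'on--Zygmund decomposition $f=g+\sum b_i$'' hides the main difficulty: in the magnetic setting there is no useful mean-zero condition for the bad pieces, because $L$ does not annihilate constants. The paper's key technical lemma (Lemma~\ref{th:CZ1}) builds the decomposition via a \emph{gauge-adapted} construction. On each Whitney cube $Q_k$ one distinguishes two types: if $R_k^2\avert{Q_k}|B|>1$ (type~1) one sets $b_k=f\chi_k$ with no mean subtracted, and the required size bound $R_k^{-p}\int|b_k|^p\le C\alpha^p|Q_k|$ comes from the Fefferman--Phong inequality with weight $|B|$; if $R_k^2\avert{Q_k}|B|\le 1$ (type~2) one invokes the Iwatsuka gauge transform (Corollary~\ref{jaugebes1}) to produce $\textbf{h}_k=\textbf{a}-\nabla\phi_k$ with $\|\textbf{h}_k\|_{L^n(2Q_k)}$ controlled by $R_k(\avert{2Q_k}|B|^{n/2})^{2/n}$, and then subtracts a \emph{gauge-twisted} mean, $b_k=\big(f-e^{-i\phi_k}m_{2Q_k}(e^{i\phi_k}f)\big)\chi_k$. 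It is this construction that simultaneously yields the bound \eqref{eq:czb} on $g$ (needed for the Chebyshev step) and the cancellation on $b_k$ (needed for the small-$t$ part of the subordination integral). The hypothesis $|B|\in RH_{n/2}$ is used repeatedly to pass between $\|\textbf{h}_k\|_n$, $\avert{Q_k}|B|$, and the type dichotomy.

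Your explanation that the $|B|^{1/2}$ term arises from commuting $L_j$ through the heat kernel via $[L_j,L_k]=\frac{1}{i}b_{jk}$ is not the mechanism at work here: no $L_j$ is ever commuted through $e^{-tH}$ in this argument, since the subordination formula expresses $H^{1/2}$ directly. The magnetic term enters instead through the gauge transform and the type-1/type-2 dichotomy in the Calder\'on--Zygmund lemma. Without that ingredient, you have no way to obtain \eqref{eq:czb} for $g$ nor the needed control on the $b_k$.
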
 
\begin{rem}
\begin{enumerate}
\item
Under assumptions \eqref{eq:shen},  we can replace $\| |B|^{1/2}\,f\|_{p} $ by $\|m(.,|B|)\,f\|_{p}$ in \eqref{eq:rev Lp'} and \eqref{eq:rev L1'}.
\item Under the conditions \eqref{eq:Shen1}, we can replace the term $\| |B|^{1/2}\,f\|_{p}+\| V^{1/2}\,f\|_{p} $ by $\|m(.,|B|+V)\,f\|_{p}$.
\end{enumerate}
	
Note that introducing \eqref{eq:shen} and \eqref{eq:Shen1} makes the proof of Theorem \ref{th:2b}, using the same strategy as before, easier.
\end{rem}
  The result concerns some new inequalities:   
   \begin{theo}\label{2th:VH}  Let $\textbf{a}\in  L^{2}_{loc}(\mathbb{R}^{n})^{n}$ and $ V
\in RH_{q},\quad 1< q\leq +\infty$. Then, there exists $ \epsilon >0$, depending only on the $RH_q$ constant of  $V$, such that $V\,H(\textbf{a},V)^{-1}$ and $H(\textbf{a},0)H(\textbf{a},V)^{-1}$ are $L^p$ bounded for all $1 \leq p
< q+\epsilon$.
\end{theo}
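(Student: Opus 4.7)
The algebraic identity $H(\textbf{a},V)=H(\textbf{a},0)+V$ (in the form sense) gives $H(\textbf{a},0)H(\textbf{a},V)^{-1} = I - V H(\textbf{a},V)^{-1}$, so the two boundedness claims are equivalent and it suffices to treat $T := V H(\textbf{a},V)^{-1}$. Kato's diamagnetic inequality $|e^{-tH(\textbf{a},V)}f(x)| \leq e^{-tH(0,V)}|f|(x)$, integrated in $t>0$, yields the pointwise bound $|H(\textbf{a},V)^{-1}f| \leq H(0,V)^{-1}|f|$, so $\|Tf\|_p \leq \|V H(0,V)^{-1}|f|\,\|_p$. This reduces the problem to the non-magnetic case $\textbf{a}=0$, where positivity of the resolvent is available.

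\textbf{$L^1$ endpoint.} Working with $H_0 := -\Delta + V$ and the representation $H_0^{-1} = \int_0^\infty e^{-tH_0}\,dt$, take $f\geq 0$ in $C_0^\infty$. The identity $\frac{d}{dt}\int e^{-tH_0}f\,dx = -\int V e^{-tH_0}f\,dx$ (the Laplacian contribution integrates to zero for the Gaussian-dominated kernel) together with $\int e^{-TH_0}f\,dx \to 0$ as $T\to\infty$ gives $\int V H_0^{-1}f \,dx = \int f\,dx$. Splitting a general $f\in L^1$ into positive/negative and real/imaginary parts shows $T$ is $L^1$-bounded.

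\textbf{From $L^q$ to $L^{q+\epsilon}$.} By Gehring's self-improvement, $V\in RH_{q_1}$ for some $q_1\in(q,q+\epsilon)$. On a cube $Q$, decompose $u = H_0^{-1}f$ as $u = u_1 + u_2$, with $u_1 = H_0^{-1}(f\chi_{2Q})$ and $u_2$ solving $(-\Delta+V)u_2=0$ in $2Q$; then establish a local reverse-H\"older-type bound
\[
\Big(\avert{Q}(Vu)^{q_1}\Big)^{1/q_1} \leq C \avert{2Q} |Vu| + C\,\mathcal{M}(f)(x_Q).
\]
For the $u_1$-piece use $L^2$ energy estimates together with $V\in RH_{q_1}$; for the $u_2$-piece use a Moser/subsolution mean-value inequality combined with H\"older and reverse H\"older on $V$ to pass from averages of $u_2$ to averages of $Vu_2$. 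The good-$\lambda$/extrapolation principle of Shen and Auscher--Ben Ali (\cite{Sh1,AB}) then promotes this local inequality to $L^{q_1}$-boundedness of $T$, and Marcinkiewicz interpolation between the $L^1$ and $L^{q_1}$ bounds covers $1\leq p < q+\epsilon$.

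\textbf{Main obstacle.} The hard point is the local reverse-H\"older inequality on $Vu$. Converting Moser-type bounds on $u_2$ into analogous bounds on $Vu_2$ must be done at the exponent $q_1$ dictated by Gehring, with a constant independent of the cube; balancing the H\"older exponents so that only $\avert{2Q} Vu$ (and not a larger power) appears on the right-hand side is the delicate balance that enables the final extrapolation.
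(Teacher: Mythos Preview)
Your reduction via Kato--Simon to the non-magnetic operator is valid and, in fact, already finishes the job: once $\|Tf\|_p \le \|V H(0,V)^{-1}|f|\,\|_p$ is established, the $L^p$-boundedness of $V(-\Delta+V)^{-1}$ for $1\le p<q+\epsilon$ is exactly Theorem~1.1 of \cite{AB}, so you could simply cite it. Your $L^1$ step is also essentially the paper's (the paper quotes \cite{AB} for the same identity).

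Where your write-up diverges from the paper is in the $L^p$ step, and this is precisely where you flag an ``obstacle''. You set up a decomposition $u=u_1+u_2$ and aim for a local reverse H\"older inequality on $Vu$ with a residual maximal-function term. The paper avoids this decomposition entirely, by two devices. First, the criterion it uses (Theorem~\ref{2theor:shen}) only requires the reverse H\"older estimate \eqref{2T:shen} for $f$ supported \emph{away from} $\alpha_2 Q$; with this restriction, $u=H(\textbf a,V)^{-1}f$ is already a weak solution of $H(\textbf a,V)u=0$ in $4Q$, so there is no ``local'' piece $u_1$ to handle and no $\mathcal M(f)$ term appears ($S=0$). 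Second, the reverse H\"older inequality itself,
\[
\Big(\avert{Q}(V|u|)^q\Big)^{1/q}\le C\,\avert{2Q} V|u|,
\]
is obtained in one stroke from Lemma~\ref{2th:sousharm}: since $|u|^2$ is subharmonic (Lemma~\ref{2subh}) and $V\in RH_q$, the weighted mean-value inequality applies with $\omega=V$, $f=|u|^2$, $s=1/2$. The ``delicate balance of H\"older exponents'' you anticipate is absorbed into this lemma, which is the real workhorse here.

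So your outline is not wrong in spirit, but the acknowledged gap is exactly the part the paper's two tools are designed to eliminate; as written, your argument stops short of a proof.
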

It follows by complex interpolation ( see \cite{AB} for more details):
\begin{cor}\label{2th:VH2}  Let $\textbf{a}\in  L^{2}_{loc}(\mathbb{R}^{n})^{n}$ and $ V
\in RH_{q}, \, \, 1<q\leq +\infty$. Then, there exists an $ \epsilon >0$, depending only on the $RH_q$ constant of  $V$, such that, the operators $V^{1/2}\,H(\textbf{a},V)^{-1/2}$ and $H(\textbf{a},0)^{1/2}\, H(\textbf{a},V)^{-1/2}$ are $L^p$ bounded for all $1< p< 2q+\epsilon$.
\end{cor}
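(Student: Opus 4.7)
The plan is to apply Stein's complex interpolation theorem to the analytic family $T_z = V^{z} H(\textbf{a},V)^{-z}$ (and, separately, $R_z = H(\textbf{a},0)^{z} H(\textbf{a},V)^{-z}$) on the closed strip $0 \le \mathrm{Re}(z) \le 1$, and to read off the target operators at $z=\tfrac{1}{2}$ after establishing good boundary bounds on the two vertical sides.

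First I would handle the line $\mathrm{Re}(z)=1$. Writing $T_{1+it} = V^{it}\,\bigl(VH(\textbf{a},V)^{-1}\bigr)\,H(\textbf{a},V)^{-it}$ (the two pointwise multiplications and the two functions of $H$ all commute with each other in the obvious pairs), the middle factor is $L^{p_1}$-bounded for every $1\le p_1 < q+\epsilon$ by Theorem~\ref{2th:VH}, while $V^{it}$ is an isometry on every $L^{p_1}$ since $V^{it}$ has modulus one almost everywhere where it is defined. An analogous factorisation handles $R_{1+it}$, using the second half of Theorem~\ref{2th:VH}.

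The main obstacle is the line $\mathrm{Re}(z)=0$: I need the imaginary powers $H(\textbf{a},V)^{-it}$ and $H(\textbf{a},0)^{it}$ to be bounded on $L^{p_0}$ for every $1<p_0<\infty$ with operator norm growing at most subexponentially in $|t|$, so that the Stein admissibility conditions are met. Here I would invoke the diamagnetic inequality $|e^{-sH(\textbf{a},V)}f|\le e^{s\Delta}|f|$, valid because $V\ge 0$, which shows that the heat semigroup is a contraction on every $L^{p_0}$. Standard functional calculus then yields a bounded $H^{\infty}$-calculus for $H(\textbf{a},V)$ on $L^{p_0}$ and the bound $\|H(\textbf{a},V)^{-it}\|_{L^{p_0}\to L^{p_0}} \le C_{p_0}\, e^{\theta|t|}$ for some $\theta<\pi/2$, which is more than admissible in Stein's theorem; the same applies to $H(\textbf{a},0)$. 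This step, although classical, is the real content beyond Theorem~\ref{2th:VH}.

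With both boundary bounds in place, Stein's theorem at $z=\tfrac{1}{2}$ yields $L^p$-boundedness of $V^{1/2}H(\textbf{a},V)^{-1/2}$ with $\tfrac{1}{p} = \tfrac{1}{2p_0} + \tfrac{1}{2p_1}$. Letting $p_0\in(1,\infty)$ and $p_1\in[1,q+\epsilon)$ vary freely, $\tfrac{1}{p}$ sweeps the open interval $\bigl(\tfrac{1}{2(q+\epsilon)},\,1\bigr)$, so $p$ sweeps $(1,\,2q+\epsilon')$ with $\epsilon'=2\epsilon$, which is the range stated in the corollary. The identical argument applied to $R_z$ delivers the claim for $H(\textbf{a},0)^{1/2}H(\textbf{a},V)^{-1/2}$.
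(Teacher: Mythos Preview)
Your approach is essentially the one the paper has in mind: the paper states the corollary as a direct consequence of Theorem~\ref{2th:VH} by complex interpolation and refers to \cite{AB} for the details, and elsewhere (just after Theorem~\ref{th:mH}) makes explicit that the interpolation relies on the $L^p$-boundedness of the imaginary powers $H^{iy}$, citing Hebisch \cite{H} and the Gaussian heat-kernel bounds of \cite{DR}.

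One imprecision in your write-up is worth flagging. The passage ``the heat semigroup is a contraction on every $L^{p_0}$ \ldots\ standard functional calculus then yields a bounded $H^{\infty}$-calculus'' is not a valid deduction: mere contractivity of a positive analytic semigroup on $L^{p_0}$ does \emph{not} give a bounded $H^{\infty}$-calculus (or bounded imaginary powers) for $p_0\neq 2$. What the diamagnetic domination $|e^{-sH(\textbf{a},V)}f|\le e^{s\Delta}|f|$ actually gives you is a pointwise Gaussian upper bound on the heat kernel of $H(\textbf{a},V)$, and it is from this Gaussian bound that Hebisch's multiplier theorem \cite{H} yields $\|H(\textbf{a},V)^{it}\|_{p_0\to p_0}\le C_{p_0}(1+|t|)^{n/2}$ with polynomial (hence admissible) growth; the same applies to $H(\textbf{a},0)$. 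Once you replace your justification of the $\mathrm{Re}(z)=0$ bounds by this argument, your proof matches the paper's.
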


We would give an alternative proof of the following theorem proved by Shen in\cite{Sh4}:
\begin{theo}\label{Maxshen}Under the conditions of Theorem \ref{Lpshen}, for all $s=1,\ldots.,n$ and $ k=1,\ldots,n$, the operators $L_{s}L_{k}H(\textbf{a},V)^{-1}$ are $L^p$ bounded for any $1<p< \infty$\footnote{Shen also proved a weak (1,1) type estimate for these operators.}.

 \end{theo}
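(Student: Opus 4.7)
The plan is to adapt the approach of \cite{AB} and reduce the second-order estimates to the already-proven first-order Riesz transform bounds. The natural algebraic decomposition
\[
L_s L_k H(\mathbf{a},V)^{-1} = \bigl(L_s L_k\, H(\mathbf{a},0)^{-1}\bigr)\,\bigl(H(\mathbf{a},0)\, H(\mathbf{a},V)^{-1}\bigr),
\]
combined with the identity $H(\mathbf{a},0) H(\mathbf{a},V)^{-1} = I - V H(\mathbf{a},V)^{-1}$, splits the problem into two sub-problems: $L^p$-boundedness of $V H(\mathbf{a},V)^{-1}$ and of $L_s L_k H(\mathbf{a},0)^{-1}$ on the full range $1<p<\infty$.

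For the first sub-problem, I would exploit the pointwise bound $V \le C\,m(\cdot,|B|+V)^2$ from \eqref{eq:Shen1} together with the Fefferman--Phong type inequality $\|m(\cdot,|B|+V)\, u\|_p \lesssim \|H(\mathbf{a},V)^{1/2} u\|_p$ implicit in Theorem \ref{th:2b} and the remark following it. Splitting $V = m \cdot (V/m)$ and combining with duality (using self-adjointness of $H(\mathbf{a},V)$ and of multiplication by $V$) extends the partial range of Theorem \ref{2th:VH} to the whole range $1<p<\infty$ under the stronger hypothesis \eqref{eq:Shen1}.

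For the second sub-problem I would perform a Calder\'on--Zygmund decomposition of $f$ at the critical scale $m(\cdot,|B|)^{-1}$ and split $L_s L_k H(\mathbf{a},0)^{-1} f$ into local and global parts. The global part is estimated by a commutator argument using $[L_s,L_k] = i^{-1} b_{sk}$, reducing it to compositions of the first-order Riesz transforms $L_j H(\mathbf{a},0)^{-1/2}$, which are $L^p$-bounded by Theorem \ref{Lpshen}. The local part is controlled by an interior $L^q$-estimate ($q>p$) for weak solutions of $H(\mathbf{a},0) u = h$ on balls of radius $\lesssim m(x_0,|B|)^{-1}$, obtained via Caccioppoli-type iteration using $|B|\in RH_{n/2}$ and $|\nabla B|\le c\,m(\cdot,|B|)^3$.

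The principal obstacle is this local estimate on the individual expressions $L_s L_k u$. The equation $\sum_k L_k^2 u = h$ controls only the trace of the magnetic Hessian; to disentangle the cross terms one integrates by parts using $[L_s,L_k]=i^{-1}b_{sk}$, and the resulting error involving $\nabla B$ has to be absorbed. This is precisely where the scaling hypothesis $|\nabla B| \le c\,m(\cdot,|B|)^3$ enters, ensuring the absorption is possible at the scale $m^{-1}$. Finally, the weak-$(1,1)$ bound noted in the footnote follows from the same Calder\'on--Zygmund decomposition together with a standard good-$\lambda$ argument applied to the sharp maximal function of $L_s L_k H(\mathbf{a},V)^{-1} f$.
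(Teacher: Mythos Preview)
Your factorization $L_s L_k H(\mathbf{a},V)^{-1} = \bigl(L_s L_k H(\mathbf{a},0)^{-1}\bigr)\bigl(H(\mathbf{a},0) H(\mathbf{a},V)^{-1}\bigr)$ and the treatment of the second factor via $V\le C\,m^2$ (Remark~\ref{rq}) are in line with the paper. The divergence is in how you handle $L_s L_k H_0^{-1}$, $H_0=H(\mathbf{a},0)$: the paper's argument is \emph{purely algebraic} and needs no Calder\'on--Zygmund decomposition, no local/global splitting, and no interior regularity for the individual $L_sL_k u$. One writes
\[
L_s L_k H_0^{-1} = L_s H_0^{-1} L_k + L_s[L_k,H_0^{-1}],\qquad [L_k,H_0^{-1}]=-H_0^{-1}[L_k,H_0]H_0^{-1},
\]
with $[L_k,H_0]=\sum_j(b_{kj}L_j-\partial_j b_{kj})$. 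The first term is $(L_sH_0^{-1/2})(L_kH_0^{-1/2})^\star$, bounded on $L^p$ for all $1<p<\infty$ by Theorem~\ref{th:1a} and duality. For the commutator, set $m=m(\cdot,|B|)$ and factor
\[
L_sH_0^{-1}b_{kj}L_jH_0^{-1}=(L_sH_0^{-1}m)\,\tfrac{b_{kj}}{m^2}\,(mL_jH_0^{-1}),\quad
L_sH_0^{-1}(\partial_j b_{kj})H_0^{-1}=(L_sH_0^{-1}m)\,\tfrac{\partial_j b_{kj}}{m^3}\,(m^2H_0^{-1});
\]
the multipliers $b_{kj}/m^2$, $\partial_j b_{kj}/m^3$ are in $L^\infty$ by \eqref{eq:shen}, and the operator factors $L_sH_0^{-1}m$, $mL_jH_0^{-1}$, $m^2H_0^{-1}$ are all $L^p$-bounded for $1<p<\infty$ by Proposition~\ref{thD'}, its adjoint, and Theorem~\ref{th:mH}. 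Thus the commutator identity you reserved for the ``global part'' already settles the whole operator; the Calder\'on--Zygmund machinery and the delicate local disentangling of the magnetic Hessian you describe are unnecessary. For the full statement with $V\ne 0$ under \eqref{eq:Shen1}, the paper simply reruns this same algebraic argument with $H(\mathbf{a},V)$ and $m(\cdot,|B|+V)$ in place of $H_0$ and $m(\cdot,|B|)$, rather than factoring through $H_0$; this has the advantage of not requiring $|B|$ alone to lie in $RH_{n/2}$.
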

 Note that with more general conditions on the electric potential, we have the following new result:
 \begin{theo}\label{Maxbes} Under the conditions of Theorem \ref{th:2a}, for all $s=1,\ldots.,n$ and $ k=1,\ldots,n$ , there exists an $\epsilon>0$ depending only on the $RH_q$ constant of  $V$, such that $L_{s}L_{k}H(\textbf{a},V)^{-1}$ are $L^p$ bounded for all $1<p< q+\epsilon$.
\end{theo}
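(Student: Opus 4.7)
The plan is to reduce the problem to two ingredients already in hand: the second order Riesz transform estimate for the pure magnetic operator, and the mapping properties of $H(\mathbf{a},0)H(\mathbf{a},V)^{-1}$. The formal identity at the heart of the argument is the factorization
\begin{equation*}
L_sL_k\,H(\mathbf{a},V)^{-1} = \bigl(L_sL_k\,H(\mathbf{a},0)^{-1}\bigr)\circ\bigl(H(\mathbf{a},0)\,H(\mathbf{a},V)^{-1}\bigr),
\end{equation*}
which on a dense class comes from writing $u=H(\mathbf{a},V)^{-1}f$ and using $H(\mathbf{a},0)u = H(\mathbf{a},V)u - Vu = f - Vu$ to identify the intermediate function.

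The first factor is $L^p$ bounded for every $1<p<\infty$ by Theorem \ref{Maxshen}: when $V\equiv 0$, the hypotheses \eqref{eq:Shen1} collapse to $|B|\in RH_{n/2}$ and $|\nabla B|\leq c\,m(\cdot,|B|)^3$, which are precisely the assumptions \eqref{eq:shen} of Theorem \ref{th:2a}. The second factor is $L^p$ bounded on the range $1\leq p<q+\epsilon$ by Theorem \ref{2th:VH}, with $\epsilon$ depending only on the $RH_q$ constant of $V$. Composing the two bounds yields the desired estimate on the intersection $1<p<q+\epsilon$, which is exactly the stated range, and the dependence of $\epsilon$ on $V$ alone is inherited from Theorem \ref{2th:VH}.

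The main technical obstacle is making the operator factorization rigorous, because $H(\mathbf{a},0)H(\mathbf{a},V)^{-1}$ is only defined a priori on vectors $u$ belonging to both $\mathcal{D}(H(\mathbf{a},V))$ and $\mathcal{D}(H(\mathbf{a},0))$. For $f\in C_0^\infty$ I would proceed as follows: set $u=H(\mathbf{a},V)^{-1}f$, check from the form definition that $u\in\mathcal{V}$ and that $Vu\in L^2$ (using, e.g., Corollary \ref{2th:VH2} applied to $V^{1/2}H(\mathbf{a},V)^{-1/2}$ combined with $V^{1/2}u=V^{1/2}H(\mathbf{a},V)^{-1/2}(H(\mathbf{a},V)^{1/2}u)$), so that $H(\mathbf{a},0)u=f-Vu$ makes sense as an $L^2$ function and lies in the range where Theorem \ref{Maxshen} with $V=0$ applies. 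The factorization then holds on the dense subspace $C_0^\infty$, and the $L^p$ estimate on $L_sL_kH(\mathbf{a},V)^{-1}$ extends to all of $L^p$ in the stated range by the standard density argument.
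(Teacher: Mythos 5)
Your proposal is correct and coincides with the paper's own argument: the paper proves Theorem \ref{Maxbes} by exactly this factorization $L_{s}L_{k}H(\textbf{a},V)^{-1}=L_{s}L_{k}H(\textbf{a},0)^{-1}\cdot H(\textbf{a},0)H(\textbf{a},V)^{-1}$, invoking Theorem \ref{Maxshen} (in the $V=0$ case, whose hypotheses \eqref{eq:Shen1} indeed reduce to \eqref{eq:shen}) for the first factor and Theorem \ref{2th:VH} for the second. Your additional remarks on the rigor of the composition on $C_0^\infty$ are a sensible supplement, though the paper leaves them implicit.
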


 We mention without proof that our results admit local versions, replacing $V\in RH_{q}$ by  $V\in RH_{q,loc}$ which is defined by the same conditions on cubes with sides less than 1. Then we get the corresponding results and estimates  for $H+1$ instead of $H$.  The results on operator domains are valid under local assumptions.

Our arguments are based on local estimates. Our main  tools are

1) An improved Fefferman-Phong inequality for $A_{\infty}$ potentials.

2) Criteria for proving  $L^p$ boundedness of operators in absence of kernels. 

3)  Mean value inequalities for  nonnegative subharmonic functions against $A_{\infty}$ weights.

4) Complex  interpolation, together with  $L^p$ boundedness of imaginary powers 
of $H(\textbf{a},V)$ for $1<p<\infty$. 

5) A Calder\'on-Zygmund decomposition adapted to level sets of the maximal function of $|L f| + |V^{1/2}f| $. 

6) A gauge transform adapted to the reverse H\"older conditions on the potentials.

7)An auxiliary global weight controlling the contribution from the magnetic field.

8) Reverse H\"older inequalities involving $\L u$, $m(.,|B|) u$, $|B|^{1/2} u$ and $V^{1/2} u$ for weak solutions
of $H(\textbf{a},V) u=0$.

 The paper is organized as follows. In Section 2 we introduce some useful estimates. We state an improved Fefferman-Phong inequality and we establish an adapted gauge transform. Section 3 is devoted to the study of pure magnetic Schrodinger operator, first we establish some reverse estimates via a Calder\`on-Zygmund decomposition, then we prove the $L^p$ boundedness of Riesz transforms fo all $1<p<\infty$. In section 4 we consider the magnetic Schr\"odinger operator with electric potential, we study the $L^p$ behaviour of the first and the second order Riesz transforms.
\section{Preliminaries}

We begin by recalling some properties of the reverse H\"older classes.
 \begin{pro}\label{11.1}(Proposition 11.1 [AB]) Let $\omega$ be a nonnegative measurable function. Then the following are equivalent:
\begin{enumerate}
  \item $\omega\in A_{\infty}$.
  \item For all $s\in (0,1)$, $\omega^s\in B_{1/s}$.
  \item There exists $s\in (0,1)$, $\omega^s\in B_{1/s}$.
\end{enumerate}

\end{pro}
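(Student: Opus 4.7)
The plan is to close the cycle $(1)\Rightarrow(2)\Rightarrow(3)\Rightarrow(1)$; the implication $(2)\Rightarrow(3)$ is immediate by specializing to any fixed $s\in(0,1)$, so the real content is in the other two steps.

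For $(1)\Rightarrow(2)$ I would invoke the classical reverse Jensen characterization of $A_\infty$ (Garc\'ia-Cuerva--Rubio de Francia): $\omega\in A_\infty$ is equivalent to the existence of a constant $C>0$ such that
$$
\frac{1}{|Q|}\int_Q \omega\,dx \;\leq\; C\,\exp\!\Bigl(\frac{1}{|Q|}\int_Q \log\omega\,dx\Bigr)
$$
for every cube $Q$. Raising this inequality to the power $s\in(0,1)$ and applying the usual Jensen inequality $\exp\!\bigl(\frac{1}{|Q|}\int_Q\log\omega^s\bigr)\le\frac{1}{|Q|}\int_Q\omega^s$ (concavity of $\log$ applied to $\omega^s$) yields
$$
\Bigl(\frac{1}{|Q|}\int_Q \omega\,dx\Bigr)^{s} \;\leq\; C^{s}\,\frac{1}{|Q|}\int_Q \omega^s\,dx,
$$
which is precisely $\omega^{s}\in B_{1/s}$.

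For $(3)\Rightarrow(1)$, fix $s\in(0,1)$ with $\omega^s\in B_{1/s}$. The key technical ingredient is Gehring's self-improving lemma for reverse H\"older classes, which produces some $\epsilon>0$ such that $\omega^s\in B_{1/s+\epsilon}$. Unraveling this inequality and combining it with the (ordinary) Jensen inequality $\frac{1}{|Q|}\int_Q \omega^s\le\bigl(\frac{1}{|Q|}\int_Q\omega\bigr)^s$, a short algebraic rearrangement (raise to the power $1/s$) gives
$$
\Bigl(\frac{1}{|Q|}\int_Q \omega^{\,1+s\epsilon}\,dx\Bigr)^{1/(1+s\epsilon)} \;\leq\; C'\,\frac{1}{|Q|}\int_Q \omega\,dx,
$$
so $\omega$ itself belongs to the reverse H\"older class $RH_{1+s\epsilon}$ and hence to $A_\infty$.

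The main obstacle is the direction $(3)\Rightarrow(1)$: the hypothesis is only a reverse-Jensen-type inequality involving a single exponent, and one needs to upgrade it to a bona fide reverse H\"older condition on $\omega$ itself. Gehring's lemma provides the indispensable self-improvement in the exponent, after which Jensen's inequality transfers the improved integrability from $\omega^s$ back to $\omega$.
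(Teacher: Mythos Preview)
The paper does not supply a proof of this proposition; it is quoted verbatim from \cite{AB} (Proposition 11.1 there) as a known fact about reverse H\"older classes, so there is no argument in the present paper to compare against. Your proof is correct and is essentially the standard one: the reverse-Jensen characterization of $A_\infty$ handles $(1)\Rightarrow(2)$ cleanly, and Gehring's self-improvement combined with the ordinary Jensen inequality $\bigl(\avert{Q}\omega^s\bigr)^{1/s}\le\avert{Q}\omega$ gives $(3)\Rightarrow(1)$.
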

It is well known that if $\omega \in RH_q$ and $q<+\infty$, then $\omega \in RH_{p}$ for all $1<p<q$  and there exists an $\varepsilon >0$ such that $\omega\in RH_{q+\varepsilon}$.
 We also know that $\omega \in A_{\infty}$ if and only if there exists $q>1$ such that $\omega\in RH_{q}$. Here $A_{\infty}$ is the Muckenhoupt weight class, defined as the union of all $A_p$, $1\leq p<\infty.$
 If $\omega\in A_{\infty}$ then $\omega(x) dx$ is a doubling measure (see  \cite{St},chap V for more details).

We will also recall some important properties of the function $ m (., \omega) $:
 \begin{lemm}\label{th:mprop} Suppose $\omega\in RH_{n/2}$, then there exist $c>0$ and $C>0$ such that for all $x$ and $y$ in $\mathbb{R}^{n}$:
\begin{enumerate}
 \item  $ 0< m(x,\omega)< \infty$ for all $x\in \mathbb{R}^{n}$.
 \item Si $| x -y | < \frac{C}{m(x,\omega)}$, then $ m(x,\omega)\approx m(y,\omega).$
 \item $m(y,\omega)\leq C\{1+ | x-y| m(x,\omega)\}^{k_{0} }m(x,\omega).$
\item $m(y,\omega)\geq \frac{C\,m(x,\omega)}{\{1+ | x-y| m(x,\omega)\}^{k_{0}/(k_{0}+1)}}.$
for some $k_{0}$ depending on $\omega$.
 \end{enumerate}
 \end{lemm}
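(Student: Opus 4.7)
The argument rests on a single cube-comparison inequality derived from reverse Hölder. Set $\psi(x, r) := r^2 \avert{Q(x, r)}\omega$, so $r_x := 1/m(x, \omega) = \sup\{r > 0 : \psi(x, r) \le 1\}$. Since $RH_{n/2}$ self-improves to $RH_q$ for some $q > n/2$, Hölder on the smaller cube followed by $RH_q$ on the larger gives the master inequality
\begin{equation*}
Q(y, r) \subset Q(x, R) \ \Longrightarrow \ \psi(y, r) \le C(r/R)^{\alpha}\,\psi(x, R), \qquad \alpha := 2 - n/q > 0.
\end{equation*}
Specialising $x = y$ forces $\psi(x, r)/r^{\alpha}$ to be essentially non-decreasing, hence $\psi(x, r) \to 0$ as $r \to 0$ and (since $\omega > 0$ a.e.) $\psi(x, R) \to \infty$ as $R \to \infty$. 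Continuity of $\psi(x, \cdot)$ then gives $r_x \in (0, \infty)$ with $\psi(x, r_x) = 1$, which is (1). For (2), the master inequality with $r = \eta r_x$ and $R = r_x$ (valid whenever $|x - y| \le (1 - \eta) r_x$) produces $\psi(y, \eta r_x) \le C\eta^{\alpha}$, which is $\le 1$ once $\eta$ is chosen small depending only on $C$ and $\alpha$; hence $r_y \ge \eta r_x$, and exchanging $x, y$ gives the reverse inequality.

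For (3) the plan is first to extract from $\omega \in RH_{n/2} \subset A_\infty$ the doubling of $\omega$, which promotes the master inequality into a companion doubling envelope $\psi(x, 2R) \le D\,\psi(x, R)$; iterating from the normalisation $\psi(x, r_x) = 1$ and using the master inequality itself to interpolate within dyadic rings produces the upper envelope
\begin{equation*}
\psi(x, R) \le C(R/r_x)^{\beta}, \qquad R \ge r_x,
\end{equation*}
with $\beta := \log_2 D$ depending only on the $RH_{n/2}$ data. Set $k_0 := (\beta - \alpha)/\alpha$. Inserting this envelope into the master inequality applied to $Q(y, \delta r_x) \subset Q(x, \delta r_x + |x - y|)$ gives $\psi(y, \delta r_x) \le C\delta^{\alpha}(1 + |x - y|/r_x)^{\beta - \alpha}$; demanding the right-hand side be $\le 1$ pins $\delta \sim (1 + |x-y|/r_x)^{-k_0}$, which is exactly (3).

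Property (4) is then obtained not by a fresh analytic estimate but by algebraic inversion of (3). Applying (3) with $x$ and $y$ swapped yields $r_y^{k_0 + 1} \le C r_x\,(r_y + |x-y|)^{k_0}$; splitting into the cases $r_y \le |x-y|$ and $r_y > |x-y|$ and simplifying by elementary algebra produces in both cases $r_y \le C r_x(1 + |x - y|/r_x)^{k_0/(k_0 + 1)}$, which is (4). The main obstacle I anticipate is the construction of the upper envelope: doubling alone yields only dyadic control—$\psi(x, 2^k r_x) \le D^k$—and one still needs the master inequality to fill in non-dyadic scales cleanly. Once this envelope is in hand, the sharp sub-linear exponent $k_0/(k_0 + 1) < 1$ in (4) emerges automatically from the implicit nature of the inequality one inverts, rather than requiring an independent refinement.
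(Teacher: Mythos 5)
The paper states this lemma without proof; it is a standard collection of facts from Shen's work (essentially Lemma~1.4 and its consequences in [Sh1]), so there is no in-paper argument to compare against. Your proposal correctly reconstructs the standard proof and organizes it cleanly around a single ``master inequality'' $\psi(y,r)\le C(r/R)^\alpha\psi(x,R)$ for $Q(y,r)\subset Q(x,R)$, derived exactly as you say from H\"older on the inner cube and $RH_q$ on the outer one; together with doubling (supplied by $RH_{n/2}\subset A_\infty$), this is the engine behind all four items, and the inversion of (3) into (4) via the case split $r_y\lessgtr|x-y|$ is precisely how the sub-unit exponent $k_0/(k_0+1)$ emerges. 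Two points deserve one extra sentence each in a polished write-up. For (1), after $\psi(x,r)\to0$ and $\psi(x,R)\to\infty$, it is the near-monotonicity of $\psi(x,\cdot)/(\cdot)^\alpha$ (the $x=y$ case of the master inequality) that shows $\{r:\psi(x,r)\le1\}$ is actually \emph{bounded}, not merely that $\psi$ exceeds $1$ at some scale; otherwise the supremum could a priori be infinite despite the blow-up. For (2), ``exchanging $x,y$'' is not immediate since the hypothesis is $|x-y|\le(1-\eta)r_x$ rather than $|x-y|\le(1-\eta)r_y$; the clean fix is a short contradiction, namely that if $r_y>r_x/\eta$ then $|x-y|\le(1-\eta)r_x<(1-\eta)\eta r_y<(1-\eta)r_y$, so the swap becomes legitimate and yields $r_x\ge\eta r_y>r_x$. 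With these small repairs the argument is complete.
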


 %Il est clair, que les propriétés ci-dessus, permettent de contr\^oler explicitement les moyennes d'un poids sur une boule de $\mathbb{R}^n$ par rapport à la taille de cette boule. 
 	
We will see that if $ u $ is a weak solution of $ H (\textbf{a}, V) u = 0 $, it is easier to obtain reverse H\"older inequalities using terms $m(.,|B|)u$ and $Lu$ than is the case when we work with estimates of $|B|^{1/2}u$.

Fix an open set $\Omega$ and $f\in L^{\infty}_{comp}(\mathbb{R}^{n}) $, the space of compactly supported bounded functions on $\mathbb{R}^n$. By a weak solution of 
  \begin{equation}\label{12}
H(\textbf{a},V) u =f\,\, \textrm{in an open set}\,\Omega,
\end{equation}
we mean $u\in W(\Omega)$, with $$W(\Omega)=\{u\in L^1_{loc}(\Omega)\, ;\,     V^{1/2}u \, and \, L_{k} u \in L^2_{loc}(\Omega) \, \forall k=1,\ldots,n\}$$ and the equation \eqref{12} holds in the sense of distribution on $\Omega$. We note that if $u\in W(\Omega) $, then by Poincar\'e and the diamagnetic inequalities, $u \in L^2_{loc}(\Omega)$.

We will need the following tools:

 \begin{lemm}\textbf{Caccioppoli type inequality}\\
Let $u$ a weak solution of $H(\textbf{a},V)u=f$ in $2Q$, where $Q$ is a cube of $\mathbb{R}^n$ and $f\in L^{\infty}_{comp}(\mathbb{R}^{n})$. Then
 \begin{equation} \int_{Q} |Lu|^{2}+V|u|^{2}  \leq C\{ \int_{2Q} |f||u|  + \frac{1}{R^{2}} \int_{2Q}  |u|^{2} \}.
 \end{equation}

 \end{lemm}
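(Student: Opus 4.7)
The plan is to run the standard Caccioppoli cutoff argument adapted to the magnetic form $\mathcal{Q}$. Let $R$ denote the sidelength of $Q$, and choose $\eta \in C_c^\infty(2Q)$ with $0\leq \eta\leq 1$, $\eta\equiv 1$ on $Q$, and $|\nabla\eta|\leq C/R$. I would test the weak equation $\mathcal{Q}(u,\phi)=\int f\bar\phi$ against $\phi=\eta^2 u$. The admissibility of this test function follows from a standard approximation: since $u\in W(2Q)$, a mollification $u_\varepsilon$ with $\eta^2 u_\varepsilon$ compactly supported in $2Q$ lies in the form domain, and we pass to the limit using $L_k u, V^{1/2}u\in L^2_{loc}(2Q)$.

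The central computation is the product rule
$$L_k(\eta^2 u) \;=\; \eta^2 L_k u + \frac{2\eta\,\partial_k\eta}{i}\,u.$$
Inserting this into $\mathcal{Q}(u,\eta^2 u)$ and using $\overline{1/i}=i$ gives
$$\mathcal{Q}(u,\eta^2 u) \;=\; \int \eta^2|Lu|^2 + \int V\eta^2|u|^2 + 2i\sum_{k}\int \eta\,(\partial_k\eta)\,(L_k u)\,\bar u.$$
Since this equals $\int \eta^2 f\bar u$ and the first two integrals on the right are real, taking real parts yields
$$\int \eta^2|Lu|^2 + \int V\eta^2|u|^2 \;=\; \mathrm{Re}\!\int\eta^2 f\bar u \;+\; 2\,\mathrm{Im}\sum_k\int \eta\,(\partial_k\eta)\,(L_k u)\,\bar u.$$

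To finish, I would bound the mixed term by Cauchy--Schwarz and Young's inequality,
$$\Bigl|\sum_k\int\eta(\partial_k\eta)(L_k u)\bar u\Bigr| \;\leq\; \Bigl(\int\eta^2|Lu|^2\Bigr)^{1/2}\Bigl(\int|\nabla\eta|^2|u|^2\Bigr)^{1/2} \;\leq\; \tfrac14\int\eta^2|Lu|^2 + C\int|\nabla\eta|^2|u|^2,$$
and estimate $|\mathrm{Re}\int\eta^2 f\bar u|\leq \int_{2Q}|f||u|$. Absorbing $\tfrac12\int\eta^2|Lu|^2$ into the left-hand side and using $\eta=1$ on $Q$ together with $|\nabla\eta|\leq C/R$, $\mathrm{supp}\,\eta\subset 2Q$, delivers the claim.

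The expected main obstacle is not analytic but bookkeeping: justifying that $\eta^2 u$ is a legitimate test function in the weak formulation. This is standard once one mollifies $u$ on a slightly larger cube and uses the density of smooth compactly supported functions in the natural energy space $\{v\in L^2(2Q): L_k v\in L^2(2Q),\, V^{1/2}v\in L^2(2Q)\}$; the computation above then passes to the limit because each term is controlled by the $L^2_{loc}$ norms of $Lu$, $V^{1/2}u$ and $u$ on $2Q$ (the last by the diamagnetic inequality, as noted in the paragraph preceding the lemma).
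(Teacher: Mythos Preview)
Your argument is correct and is exactly the standard Caccioppoli cutoff computation. The paper states this lemma without proof, treating it as well known; the very same testing-against-$\bar u\eta^2$ computation you carry out appears verbatim later in the paper (in the proof of Lemma~\ref{lemm:3} for the homogeneous equation), so your approach coincides with the intended one.
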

 \begin{pro}\textbf{Diamagnetic inequality}\cite{LL}\\
For all $u\in W^{1,2}_{\textbf{a}}(\mathbb{R}^{n})$, with $$W^{1,2}_{\textbf{a}}(\mathbb{R}^{n})=\{u\in L^{2}(\mathbb{R}^n),\,\, L_{k} u \in L^{2}(\mathbb{R}^{n}),\,\, k=1\cdots , n\},$$
we have
\begin{equation}\label{2diam}|\nabla(|u|)|\leq| L(u)|.
\end{equation}

\end{pro}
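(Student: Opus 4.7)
The plan is to prove the diamagnetic inequality by smoothing $|u|$ near its zero set and exploiting a pointwise identity in which the magnetic potential cancels after taking a real part. First I would note that for $u \in W^{1,2}_{\textbf{a}}(\mathbb{R}^n)$ the distributional identity $\partial_j u = i(L_j u + a_j u)$ shows $\partial_j u \in L^1_{loc}(\mathbb{R}^n)$: $L_j u \in L^2$ by assumption, and $a_j u \in L^1_{loc}$ by Cauchy--Schwarz since $a_j \in L^2_{loc}$ and $u \in L^2$. Hence $u \in W^{1,1}_{loc}(\mathbb{R}^n)$ and the classical chain rule for Sobolev functions applies to the composition $|u|$.

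For $\epsilon > 0$, introduce the regularization $u_\epsilon := (|u|^2 + \epsilon^2)^{1/2}$, a smooth function of $(\operatorname{Re} u, \operatorname{Im} u)$ with bounded gradient. The chain rule gives
\begin{equation*}
\partial_j u_\epsilon \;=\; \frac{\operatorname{Re}(\bar u\, \partial_j u)}{u_\epsilon} \;=\; \operatorname{Re}\!\left( \frac{i \bar u}{u_\epsilon}\bigl(L_j u + a_j u\bigr) \right).
\end{equation*}
The key cancellation is that the magnetic contribution $\operatorname{Re}(i a_j |u|^2 / u_\epsilon)$ is purely imaginary (since $a_j$ and $|u|^2/u_\epsilon$ are real) and therefore vanishes. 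Since $|\bar u|/u_\epsilon \le 1$, we are left with the pointwise bound $|\partial_j u_\epsilon| \le |L_j u|$. Squaring and summing in $j$ yields $|\nabla u_\epsilon| \le |Lu|$ almost everywhere.

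Letting $\epsilon \to 0$ finishes the argument: $u_\epsilon \to |u|$ in $L^2_{loc}$ (and monotonically from above), while the sequence $\{\nabla u_\epsilon\}$ is bounded in $L^2_{loc}$ independently of $\epsilon$. By weak compactness and the closedness of the distributional gradient, $|u|$ admits a gradient in $L^2_{loc}$ with $|\nabla |u|| \le |Lu|$ almost everywhere. The main delicate point is precisely this passage to the limit in a setting where $\textbf{a}$ is only locally square-integrable; the algebraic identity itself is immediate once one notices that the term proportional to $a_j$ is purely imaginary, so the remaining work is simply justifying the chain rule and the weak limit in the minimal regularity class.
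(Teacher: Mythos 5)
The paper cites [LL] for this proposition and gives no proof of its own; your argument is the standard Kato--Lieb--Loss regularization proof, and it is correct. The one step worth spelling out at the end is why the pointwise bound survives the weak limit: the constraint set $\{h\in L^2_{loc}: |h|\le |Lu|\ \text{a.e.}\}$ is convex and strongly closed, hence weakly closed, so the weak subsequential limit $\nabla|u|$ of $\nabla u_\epsilon$ inherits the bound.
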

\begin{pro}\textbf{Kato-Simon inequality}:
 \begin{equation}\label{eq:KS}| (H(\textbf{a},V)+\lambda)^{-1} f| \leq (-\Delta+ \lambda)^{-1}|f|; \qquad \forall f\in L^{2}(\mathbb{R}^n),\,\forall \lambda >0.
 \end{equation}
 \end{pro}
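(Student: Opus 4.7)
The plan is to combine two classical ingredients: a magnetic version of Kato's distributional inequality and the positivity-preserving character of the Laplacian resolvent $(-\Delta+\lambda)^{-1}$. Writing $u=(H(\textbf{a},V)+\lambda)^{-1}f$, the strategy is to show that $|u|$ satisfies, in the distributional sense,
\begin{equation*}
(-\Delta+\lambda)|u|\leq |f|,
\end{equation*}
and then invert $(-\Delta+\lambda)$ against this nonnegative right-hand side.

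First I would establish Kato's inequality in the magnetic setting: for $u$ with $L_{k}u\in L^{2}_{loc}$ and $\sum_{k}L_{k}^{2}u\in L^{1}_{loc}$,
\begin{equation*}
\Delta|u|\geq -\mathrm{Re}\Bigl(\overline{\mathrm{sgn}(u)}\,\sum_{k}L_{k}^{2}u\Bigr)\quad\text{in }\mathcal{D}'(\mathbb{R}^{n}),
\end{equation*}
with $\mathrm{sgn}(u)=u/|u|$ on $\{u\neq 0\}$ and zero otherwise. The argument is by regularization: set $u_{\varepsilon}=(|u|^{2}+\varepsilon^{2})^{1/2}$, and after two differentiations one obtains the identity $|\nabla u_{\varepsilon}|^{2}+u_{\varepsilon}\Delta u_{\varepsilon}=|Lu|^{2}-\mathrm{Re}(\bar u\sum_{k}L_{k}^{2}u)$, in which the magnetic cross terms produced by $\nabla u=iLu+i\textbf{a}u$ cancel once real parts are taken. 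The pointwise bound $|\nabla u_{\varepsilon}|\leq|Lu|$, coming from $u_{\varepsilon}\nabla u_{\varepsilon}=-\mathrm{Im}(\bar u\,Lu)$ and $|\bar u|\leq u_{\varepsilon}$, gives $u_{\varepsilon}\Delta u_{\varepsilon}\geq-\mathrm{Re}(\bar u\sum_{k}L_{k}^{2}u)$, and letting $\varepsilon\to 0$ yields the claim.

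Next I would apply this to $u=(H(\textbf{a},V)+\lambda)^{-1}f\in\mathcal{D}(H)\subset\mathcal{V}$, which satisfies $\sum_{k}L_{k}^{2}u+Vu+\lambda u=f$ in $\mathcal{D}'$. Substituting $\sum_{k}L_{k}^{2}u=f-Vu-\lambda u$ into Kato's inequality and using $\overline{\mathrm{sgn}(u)}\,u=|u|$ gives
\begin{equation*}
\Delta|u|\geq V|u|+\lambda|u|-\mathrm{Re}\bigl(\overline{\mathrm{sgn}(u)}\,f\bigr).
\end{equation*}
Since $V\geq 0$ and $|\mathrm{Re}(\overline{\mathrm{sgn}(u)}\,f)|\leq|f|$, this rearranges to $(-\Delta+\lambda)|u|\leq|f|$ in $\mathcal{D}'(\mathbb{R}^{n})$.

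Finally, I would invoke the positivity-preserving character of $(-\Delta+\lambda)^{-1}$, whose Bessel-type kernel is strictly positive: if $g\in L^{2}$ satisfies $(-\Delta+\lambda)g\leq h$ distributionally with $h\in L^{2}$, then $g\leq(-\Delta+\lambda)^{-1}h$ almost everywhere. Applying this with $g=|u|\in L^{2}$ and $h=|f|\in L^{2}$ yields exactly \eqref{eq:KS}. The delicate step is the rigorous proof of the magnetic Kato inequality under the mere hypothesis $\textbf{a}\in L^{2}_{loc}$: one must justify all integrations by parts, verify the cancellation of magnetic terms at the distributional level, and control the limit $\varepsilon\to 0$; once this is done, the rest of the chain is routine.
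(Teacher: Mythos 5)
The paper offers no proof of this proposition; the Kato--Simon (domination) inequality is simply quoted as a classical fact (Kato's distributional inequality and Simon's semigroup comparison; cf.\ Simon's paper on Kato's inequality cited in the bibliography). Your proposal reconstructs the canonical argument and it is sound. The identity for $u_\varepsilon=(|u|^2+\varepsilon^2)^{1/2}$ is correct (the magnetic cross-terms cancel upon taking real parts, exactly as you indicate), the bound $|\nabla u_\varepsilon|\le |Lu|$ follows from $u_\varepsilon\nabla u_\varepsilon=-\mathrm{Im}(\bar u\,Lu)$ since $\textbf a$ is real-valued, and with $\sum_k L_k^2 u=f-Vu-\lambda u$ and $V\ge 0$ you correctly derive $(-\Delta+\lambda)|u|\le |f|$ in $\mathcal D'(\mathbb R^n)$. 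The application is legitimate because $u=(H(\textbf a,V)+\lambda)^{-1}f\in\mathcal D(H)$ gives $L_k u\in L^2$ and $\sum_k L_k^2 u=f-Vu-\lambda u\in L^1_{loc}$ (using $V^{1/2}u\in L^2$ and $V^{1/2}\in L^2_{loc}$), so the hypotheses of the distributional inequality are met. The last step, passing from $(\lambda-\Delta)\bigl(|u|-(-\Delta+\lambda)^{-1}|f|\bigr)\le 0$ with both functions in $L^2$ to the pointwise inequality, is a standard weak maximum principle (test against the positive part, or invoke positivity of $e^{t\Delta}$); you are right that the global $L^2$ membership of $|u|$ and $|f|$ is what makes this routine. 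The only genuinely delicate point, which you correctly flag, is carrying out Kato's mollification rigorously under the mere assumption $\textbf a\in L^2_{loc}$; this is documented in the literature (Kato; Leinfelder--Simader; Cycon--Froese--Kirsch--Simon) and is precisely what the paper relies on by stating the result without proof.
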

 \textbf{Fefferman-Phong inequalities}
The usual Fefferman-Phong inequalities are of the form:
 \begin{equation}\label{eq:feff}
  \int_{Q} | u |^{p}  \min\{\avert {Q}\omega, \frac{1}{R^p}\}\leq C\{\int_{Q} | Lu|^{p}  + \omega | u |^{p} \}.
 \end{equation}
 Shen proved in \cite{Sh3} the following  global version introducing the auxiliary weight function $m(., \omega)$ :
 \begin{lemm}Suppose $\textbf{a}\in L^{2}_{loc}(\mathbb{R}^{n})^{n}$. We also assume: 
\begin{equation}
 \left\{\begin{array}{ll}
 | B |+V  \in RH_{n/2}\\0\leq V\leq c\, m(.,| B|+V)^{2}\\ | \nabla B |\leq c'\, m(.,| B|+V)^{3}.
  \end{array}
  \right.
   \end{equation}
   Then, for all $u \in C^1(\mathbb{R}^{n})$,
\begin{equation}\|m(., |B|+V) u\|_{2}\leq C \big( \|Lu\|_{2}+\|V^{\frac{1}{2}}u\|_{2}\big).
\end{equation}
 \end{lemm}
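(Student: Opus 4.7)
The plan is to combine the local Fefferman--Phong inequality \eqref{eq:feff}, applied with $\omega = |B|+V$ at the natural scale $R = 1/m(x,|B|+V)$, with a Whitney-type covering of $\mathbb{R}^n$, and then to eliminate the spurious $\int |B|\,|u|^2$ contribution using the magnetic commutator $[L_j,L_k] = i b_{jk}$ together with the bound $|\nabla B|\leq c'\,m^3$. The hypothesis $|B|+V\in RH_{n/2}$ guarantees that $m(\cdot,|B|+V)$ is well defined and slowly varying (Lemma \ref{th:mprop}), while the pointwise bounds on $V$ and $|\nabla B|$ provide the quantitative control at the critical scale $1/m$ that is needed to close the argument.

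First I would fix $x\in\mathbb{R}^n$ and set $R = 1/m(x,|B|+V)$, so that by the very definition of $m$ one has $\frac{R^2}{|Q|}\int_Q(|B|+V)\sim 1$ for $Q = Q(x,R)$. Taking $p=2$ and $\omega = |B|+V$ in \eqref{eq:feff} gives
\begin{equation*}
\frac{1}{R^2}\int_Q |u|^2 \leq C\int_Q\bigl(|Lu|^2 + (|B|+V)|u|^2\bigr).
\end{equation*}
Property (2) of Lemma \ref{th:mprop} shows that $m(y,|B|+V)\sim m(x,|B|+V) = 1/R$ throughout $Q$, so the constant $1/R^2$ can be replaced by the pointwise weight $m(y,|B|+V)^2$. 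Building a Whitney-type subcover $\{Q_k\}$ by such critical cubes with finite overlap (feasible thanks to properties (2)--(4) of Lemma \ref{th:mprop}) and summing, I would obtain the global estimate
\begin{equation*}
\int m(y,|B|+V)^2 |u|^2\,dy \leq C\int |Lu|^2 + C\int |B|\,|u|^2 + C\int V|u|^2.
\end{equation*}

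The hard part is then to bound $\int|B||u|^2$ by $C\|Lu\|_2^2$ plus a small multiple of $\int m^2|u|^2$, which can be absorbed into the left-hand side. The key input is the commutator identity $[L_j,L_k] = i b_{jk}$: since each $L_j$ is self-adjoint, integration by parts gives
\begin{equation*}
\int b_{jk}|u|^2 = -2\,\mathrm{Im}\int L_j u\,\overline{L_k u},
\end{equation*}
which is directly controlled by $\|Lu\|_2^2$. To pass from the signed $b_{jk}$ to $|b_{jk}|$, I would work cube by cube on the Whitney cover: on a critical cube $Q$, the oscillation of $B$ is at most $R\,\|\nabla B\|_{L^\infty(Q)}\leq c'\,R\,m^3 = c'\,m^2$, which is the same order as the average of $|B|$ on $Q$. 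Hence there is a single sign $\varepsilon_{jk,Q}\in\{\pm 1\}$ for which $|b_{jk}| = \varepsilon_{jk,Q}\,b_{jk} + O(m^2)$ pointwise on $Q$, and a localized integration by parts using a smooth partition of unity subordinate to $\{Q_k\}$ converts the principal $\varepsilon_{jk,Q}\,b_{jk}$ part into a $\|Lu\|_2^2$ contribution, while the $O(m^2)$ error and the partition-of-unity commutators produce $\eta\int m^2|u|^2$ for any prescribed $\eta>0$. Absorbing $\eta\int m^2|u|^2$ into the left-hand side then completes the proof. The main technical obstacle, and where the argument most requires care, is bookkeeping these cutoff and sign-change commutators so that every error reduces to either $\|Lu\|_2^2$ or an absorbable $\eta\int m^2|u|^2$; this is precisely where the strength of the hypothesis $|\nabla B|\leq c'\,m^3$ is used.
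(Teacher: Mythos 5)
The paper states this lemma by reference to Shen~\cite{Sh3} and does not give a proof of its own, so there is no ``paper's proof'' to compare against; I assess your argument on its merits. Your outline collects the right ingredients --- the local Fefferman--Phong inequality, the slow-variation properties of $m(\cdot,|B|+V)$ from Lemma~\ref{th:mprop}, the critical-cube/Whitney cover, and the commutator identity $[L_j,L_k]=\pm i\,b_{jk}$ --- but the crucial absorption step contains a genuine gap. You claim that ``the $O(m^2)$ error and the partition-of-unity commutators produce $\eta\int m^2|u|^2$ for any prescribed $\eta>0$.'' That is exactly the point where the argument does not close, and as stated the claim is not correct.

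Concretely: after localizing the identity $\int b_{jk}|v|^2 = -2\,\mathrm{Im}\int L_k v\,\overline{L_j v}$ to $v=\chi_k u$ with $\chi_k$ subordinate to a critical cube $Q_k$ of sidelength $R_k\sim 1/m$, the cutoff produces a contribution comparable to $\|\nabla\chi_k\|_\infty^2\int_{2Q_k}|u|^2 \sim m^2\int_{2Q_k}|u|^2$; the multiplicative constant is a fixed Poincar\'e/cutoff constant, not a free small parameter. It cannot be made small by design: narrowing the transition zone of $\chi_k$ makes $\|\nabla\chi_k\|_\infty$ grow proportionally, and changing the cube size away from $1/m$ either enlarges the oscillation of $B$ (ruining the pointwise replacement $|b_{jk}|\approx\varepsilon_{jk,Q_k}b_{jk}+O(m^2)$, since that oscillation grows linearly in $R_k$) or makes $1/R_k^2$ even larger relative to $m^2$. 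Likewise, the oscillation error $|b_{jk}|-\varepsilon_{jk,Q_k}b_{jk}=O(c'\,m^2)$ carries the constant $c'$ from $|\nabla B|\leq c'\,m^3$, which is \emph{not} assumed small. Hence both error terms are of exactly the same size $\sim m^2\int|u|^2$ as the quantity you are trying to control, with fixed constants possibly $\geq 1$. Feeding these into the Fefferman--Phong step yields an inequality of the form $\int m^2|u|^2 \leq C\|Lu\|_2^2+C\|V^{1/2}u\|_2^2+\gamma\int m^2|u|^2$ with no reason to have $\gamma<1$, so the intended absorption fails. The lemma is of course true --- it is Shen's uncertainty-principle estimate --- but the proof must deal with this critical-scale balancing more delicately (for instance by a telescoping or multi-scale argument, or by treating the $V$-dominated and $B$-dominated cubes by genuinely different local estimates) rather than by a single ``main term plus absorbable error'' split. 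Your outline stops short precisely at that point.
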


In \cite{AB} we established an improved version for these inequalities in absence of the magnetic potential. We can extend this improvement to the magnetic Schr\"odinger operators:
\begin{lemm}\label{FP}\textbf{An improved Fefferman-Phong inequality} :\\ Let $\omega \in A_\infty$ and $1\le p<\infty$. Then there are constants $C>0$ and $\beta\in (0,1)$ depending only
on $p$, $n$ and the $A_\infty$ constant of $w$ such that for all cubes $Q$ $($with sidelength $R)$ and $u \in C^1(\mathbb{R}^{n})$, one has
\begin{equation}\label{1eq:FP}
\int_Q |L u|^p + \omega|u|^p \ge \frac{C m_{\beta}({R^p  \avert{Q} \omega})}{ R^{p}} \int_Q |u|^p
 \end{equation}
where $m_{\beta}(x)  = x$ for $x\le 1$ and $m_{\beta}(x) = x^\beta$ for $x\ge 1$. 
 
\end{lemm}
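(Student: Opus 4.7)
The plan is to reduce this magnetic improved Fefferman--Phong inequality to its non-magnetic counterpart, which is already established in \cite{AB}: for every $v \in C^1(\mathbb{R}^n)$ and $\omega \in A_\infty$,
$$\int_Q |\nabla v|^p + \omega |v|^p \;\ge\; \frac{C\,m_\beta(R^p \avert{Q}\omega)}{R^p}\int_Q |v|^p,$$
with constants depending only on $p$, $n$ and $[\omega]_{A_\infty}$. The bridge between the two statements is provided by a regularized form of the diamagnetic inequality.

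First I would replace $|u|$ by a smooth surrogate to sidestep the lack of $C^1$ regularity at the zero set of $u$: for $u \in C^1(\mathbb{R}^n,\mathbb{C})$ and $\varepsilon > 0$, set $v_\varepsilon := (|u|^2 + \varepsilon^2)^{1/2} \in C^1(\mathbb{R}^n)$. Since $\partial_j u = i L_j u + i a_j u$, a direct computation gives
$$\nabla(|u|^2) = 2\operatorname{Re}(\bar u\,\nabla u) = -2\operatorname{Im}(\bar u\, Lu),$$
because the magnetic contribution $\operatorname{Re}(i|u|^2\mathbf{a})$ vanishes ($\mathbf a$ real-valued). Dividing by $2v_\varepsilon$ and using $|u|\le v_\varepsilon$ yields the pointwise diamagnetic estimate
$$|\nabla v_\varepsilon(x)| \;\le\; \frac{|u(x)|}{v_\varepsilon(x)}\,|L u(x)| \;\le\; |L u(x)|.$$

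Next I would plug this into the non-magnetic inequality applied to the $C^1$ function $v_\varepsilon$, which gives
$$\int_Q |L u|^p + \omega\,v_\varepsilon^p \;\ge\; \int_Q |\nabla v_\varepsilon|^p + \omega\,v_\varepsilon^p \;\ge\; \frac{C\,m_\beta(R^p \avert{Q}\omega)}{R^p}\int_Q v_\varepsilon^p.$$
Differentiating in $\varepsilon$ shows $v_\varepsilon \downarrow |u|$ as $\varepsilon \downarrow 0$, so monotone convergence (applied to the right-hand side and to the $\omega v_\varepsilon^p$ term, which is licit since $\omega \in A_\infty \subset L^1_{loc}$) produces the claimed inequality in the limit, with $|u|^p$ in place of $v_\varepsilon^p$ on both sides.

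The main obstacle is Step~1, namely proving the regularized diamagnetic inequality $|\nabla v_\varepsilon| \le |Lu|$ pointwise in the presence of the magnetic potential $\mathbf{a}$. The key observation, the vanishing of $\operatorname{Re}(i|u|^2\mathbf{a})$, is what isolates $Lu$ from the full $\nabla u$; once this is correctly identified, the remaining argument is essentially formal. The explicit regularization via $v_\varepsilon$ is preferred over a density argument here because the non-magnetic reference statement in \cite{AB} is phrased for $C^1$ functions, whereas $|u|$ itself is merely Lipschitz, and the limit $\varepsilon \to 0$ is clean by monotonicity.
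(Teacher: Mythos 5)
Your proposal is correct and follows the same route as the paper, which disposes of the lemma with the one-line remark that ``the proof is the same as that of Lemma 2.1 in [AB], combined with the diamagnetic inequality.'' You have simply made explicit the content of that remark: applying the non-magnetic estimate from [AB] to the $C^1$ surrogate $v_\varepsilon=(|u|^2+\varepsilon^2)^{1/2}$, invoking the pointwise bound $|\nabla v_\varepsilon|\le |Lu|$ (which hinges on $\operatorname{Re}(i|u|^2\mathbf{a})=0$ for real $\mathbf{a}$, as you correctly identify), and letting $\varepsilon\downarrow 0$ by monotone convergence.
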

	
The proof is the same as that of Lemma 2.1 in \cite{AB}, combined with the diamagnetic inequality.

\begin{lemm}\label{2th:jauge} \textbf{Iwatsuka gauge transform}

 Let $\textbf{a}\in L^{2}_{loc}(\mathbb{R}^{n})^{n}$ and $Q$ a cube of $\mathbb{R}^n$. Suppose $B\in C^{1}(\mathbb{R}^{n}, M_{n}(\mathbb{R}))$. Then there exist $\textbf{h}\in C^{1}(Q,\mathbb{R}^{n})$ and a real function $\phi \in C^{2}(Q)$, 
 such that $curl \textbf{h}=B$ in $Q$ and
\begin{equation}\label{Gau}\textbf{h}=\textbf{a}- \nabla \phi,\qquad \textrm{in} \,\, Q,
\end{equation}
with
 
 \begin{equation}\label{2jaugebes}\big(\avert{Q} | \textbf{h}|^{n}\big)^{1/n}\leq  c\,R\big(\avert{Q} |B|^{\frac{n}{2}}\big)^{\frac{2}{n}},
\end{equation}
here $c$ depends only on $n$.
\end{lemm}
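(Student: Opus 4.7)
I begin by constructing $\textbf{h}$ on $Q$ via the Poincar\'e homotopy formula, available because $Q$ is star-shaped with respect to its center $x_Q$. Explicitly, set
\begin{equation*}
h_j(x) \;=\; \sum_{k=1}^n (x_k - x_{Q,k}) \int_0^1 t\, b_{jk}\bigl(x_Q + t(x-x_Q)\bigr)\, dt, \qquad x \in Q.
\end{equation*}
Differentiating under the integral sign and using the antisymmetry $b_{jk} = -b_{kj}$, one checks directly that $\partial_k h_j - \partial_j h_k = b_{jk}$ in $Q$, so $\mathrm{curl}\,\textbf{h} = B$; the assumption $B \in C^1$ ensures $\textbf{h} \in C^1(Q,\mathbb{R}^n)$.

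Next, since $\mathrm{curl}\,\textbf{a} = B = \mathrm{curl}\,\textbf{h}$ holds in the distributional sense on the simply connected cube $Q$, the 1-form $\textbf{a}-\textbf{h}$ is closed and hence exact: the distributional Poincar\'e lemma produces a scalar $\phi$ on $Q$ with $\nabla\phi = \textbf{a} - \textbf{h}$, i.e. $\textbf{h} = \textbf{a} - \nabla\phi$. The regularity class of $\phi$ follows from that of $\textbf{a} - \textbf{h}$.

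The heart of the proof is the $L^n$ estimate. By translation and scaling I reduce to $x_Q = 0$ and $R = 1$; the claim becomes $\|\textbf{h}\|_{L^n(Q)} \lesssim \|B\|_{L^{n/2}(Q)}$. From the formula one obtains only the pointwise bound $|\textbf{h}(x)| \le c_n\,|x|\int_0^1 t\,|B(tx)|\,dt$, which integrates via Minkowski's integral inequality merely to $\|\textbf{h}\|_{L^n(Q)} \lesssim \|B\|_{L^n(Q)}$ -- short of the desired Sobolev-type gain. To recover it, the plan is to average the Poincar\'e formula over base points in $Q$ against a fixed bump $\rho$ with $\int\rho = 1$. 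After changing variables in the $t$-integral, $\textbf{h}$ admits the $n$-dimensional integral representation
\begin{equation*}
h_j(x) \;=\; \sum_{k=1}^n \int_Q b_{jk}(z)\,(x_k - z_k)\,K(x,z)\,dz,
\end{equation*}
with kernel $K(x,z)$ satisfying $|K(x,z)| \lesssim |x-z|^{-n}$ (the singularity arises from the Jacobian of the map $y = (z-tx)/(1-t)$ and is integrated against the cutoff imposed by $\mathrm{supp}\,\rho$). Consequently $|\textbf{h}(x)| \lesssim \int_Q |B(z)|\,|x-z|^{-(n-1)}\,dz$, i.e.\ $|\textbf{h}|$ is dominated by the Riesz $1$-potential of $|B|\,\chi_Q$, and the Hardy--Littlewood--Sobolev inequality $I_1 : L^{n/2} \to L^n$ (valid for $n\ge 3$) yields $\|\textbf{h}\|_{L^n(Q)} \lesssim \|B\|_{L^{n/2}(Q)}$. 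Undoing the scaling restores the factor $R$.

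The main obstacle is precisely this Sobolev improvement from $L^{n/2}$ to $L^n$: the unaveraged Poincar\'e formula expresses $\textbf{h}(x)$ only as a 1D line integral of $B$ along the ray from $x_Q$ to $x$, which hides the $n$-dimensional Riesz-potential structure needed for HLS; averaging over base points is the device that exposes it. An equivalent alternative would be to exploit the gauge freedom itself and replace $\textbf{h}$ by $\textbf{h} - \nabla\psi$ (with $\psi$ absorbed into $\phi$) chosen in the Coulomb gauge $\mathrm{div}\,\textbf{h} = 0$; then $-\Delta h_j = -\sum_k \partial_k b_{jk}$ is first-order in $B$, so Calder\'on--Zygmund estimates followed by a Sobolev--Poincar\'e embedding $\dot W^{1,n/2}(Q)\hookrightarrow L^n(Q)$ close the bound. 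Either route converts the purely algebraic Poincar\'e identity $\mathrm{curl}\,\textbf{h} = B$ into the elliptic/potential-theoretic gain required by the statement.
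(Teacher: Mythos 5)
Your proposal follows essentially the same route as the paper: the Iwatsuka construction the paper cites from Shen is precisely the average of the Poincar\'e homotopy $g_j(x,y)=\sum_k(x_k-y_k)\int_0^1 b_{jk}(y+t(x-y))\,t\,dt$ over base points $y$ uniformly distributed in $Q$ (your bump $\rho$ is just $\mathbf{1}_Q/|Q|$), which yields exactly the Riesz $I_1$-potential bound $|\textbf{h}(x)|\lesssim\int_Q|B(y)|\,|x-y|^{1-n}\,dy$ followed by Hardy--Littlewood--Sobolev. Your observation that a single-base-point homotopy would not suffice and that the averaging is what exposes the Riesz-potential structure is a correct and useful clarification of the step the paper leaves implicit, and the Coulomb-gauge alternative you sketch would also work, but the paper's proof is the averaged-homotopy argument you ultimately settle on (it also writes $\phi$ explicitly as the corresponding averaged homotopy of $\textbf{a}$ rather than invoking the abstract Poincar\'e lemma).
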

\begin{proof}We follow the proof of Lemma 2.4 in \cite{Sh5}, which uses the construction of Iwatsuka \cite{I}.

For $x,\,y\in Q$, let
$$g_{j}(x,y)= \sum^{n}_{k=1} (x_{k}-y_{k})\int^{1}_{0} b_{jk}(y+t(x-y))t dt$$
where $x=(x_{1},\ldots,x_{n}),\, y=(y_{1},\ldots,y_{n}).$\\
Let
$$h_{j}(x)= \avert {Q}g_{j}(x,y)dy, \, j=1,2,\ldots,n.$$
Then
$$|\textbf{h}(x)|=\bigg(\sum_{j} |h_{j}(x)|^{2}\bigg)^{1/2}\leq n^{\frac{n}{2}-1}\int_{Q} \frac{|B(y)|}{|x-y|^{n-1}}dy.$$

Now, we apply the  Hardy-Littlewood-Sobolev inequality (\cite{St},p.119) to get \eqref{2jaugebes}.
%On peut voir que 
%$$g_{j}(x,y)=a_{j}(x)-\frac{\partial}{\partial x_{j}}\bigg\{ \sum^{n}_{k=1}(x_{k}-y_{k})\int^{1}_{0}a_{k}(y+t(x-y))dt\bigg\}$$
Hence \eqref{Gau} holds with
%$$h_{j}(x)=\textbf{a}_{j}(x)-\frac{\partial \phi}{\partial x_{j}}(x),$$

$$\phi(x)=\avert {Q}\bigg\{ \sum^{n}_{k=1}(x_{k}-y_{k})\int^{1}_{0}a_{k}(y+t(x-y))dt\bigg\}dy.$$
\end{proof}
\begin{cor} \label{jaugebes1} Let $\textbf{a}\in L^{2}_{loc}(\mathbb{R}^{n})^{n}$ and $Q$ a cube in $\mathbb{R}^n$. We assume that $curl \textbf{a}=B\in L^{n/2}_{loc}(\mathbb{R}^n, M_{n}(\mathbb{R}))$. %Supposons qu'il existe une constante $C>0$ telle que $\forall x\in \mathbb{R}^{n},\, |B(x)|\leq C m(x,|B|)$.
 Then, there exist $\textbf{h}\in L^{n}(Q,\mathbb{R}^{n})$ and a real function $\phi \in H^{1}(Q)$, 
 such that $curl \textbf{h}=B$ a.e in $Q$ and
\begin{equation}\label{2phi}\textbf{h}=\textbf{a}- \nabla  \phi \qquad\textrm{a.e in}\, Q ,
\end{equation}
with%alors il existe $\textbf{h}\in L^{2}_{loc}(Q,\mathbb{R}^{n})$, telle que %pour tout $1\leq p<2$ et for all $f\in C^{\infty}_{0}(Q)$
\begin{equation}\label{jaugebes1}\big(\avert{Q} | \textbf{h}|^{n}\big)^{1/n}\leq  c\,R\,\,\big(\avert{Q} |B|^{\frac{n}{2}}\big)^{\frac{2}{n}}.
\end{equation}

\end{cor}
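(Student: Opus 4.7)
The plan is to deduce this corollary from Lemma~\ref{2th:jauge} by a standard mollification/weak compactness argument. Since $\textbf{a}\in L^{2}_{loc}(\mathbb{R}^{n})^n$ and $B\in L^{n/2}_{loc}(\mathbb{R}^{n}, M_n(\mathbb{R}))$, fix a slightly enlarged cube $Q'\supset Q$ and, for $\epsilon$ smaller than the distance from $Q$ to $\mathbb{R}^n\setminus Q'$, set $\textbf{a}_\epsilon=\textbf{a}*\rho_\epsilon$ and $B_\epsilon=B*\rho_\epsilon$ on $Q$, where $\rho_\epsilon$ is a standard smooth mollifier. Both are smooth on $Q$; since convolution commutes with differentiation, $\mathrm{curl}\,\textbf{a}_\epsilon=B_\epsilon$ on $Q$. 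Moreover $\textbf{a}_\epsilon\to\textbf{a}$ in $L^{2}(Q)^n$ and $B_\epsilon\to B$ in $L^{n/2}(Q)$ as $\epsilon\to 0$.

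Applying Lemma~\ref{2th:jauge} to each $\textbf{a}_\epsilon$ produces $\textbf{h}_\epsilon\in C^1(Q,\mathbb{R}^n)$ and $\phi_\epsilon\in C^2(Q)$ satisfying $\textbf{h}_\epsilon=\textbf{a}_\epsilon-\nabla\phi_\epsilon$, $\mathrm{curl}\,\textbf{h}_\epsilon=B_\epsilon$, and
$$
\Big(\avert{Q}|\textbf{h}_\epsilon|^{n}\Big)^{1/n}\le c\,R\,\Big(\avert{Q}|B_\epsilon|^{n/2}\Big)^{2/n}.
$$
Normalize $\phi_\epsilon$ to have zero mean over $Q$. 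Since $\|B_\epsilon\|_{L^{n/2}(Q)}$ is controlled by $\|B\|_{L^{n/2}(Q')}$, the family $\{\textbf{h}_\epsilon\}$ is bounded in $L^n(Q)^n$, hence (as $n\ge 2$ and $Q$ is bounded) in $L^2(Q)^n$. The identity $\nabla\phi_\epsilon=\textbf{a}_\epsilon-\textbf{h}_\epsilon$ together with the $L^2(Q)$ convergence of $\textbf{a}_\epsilon$ yields a uniform $L^2$ bound on $\nabla\phi_\epsilon$, and the Poincar\'e--Wirtinger inequality then bounds $\phi_\epsilon$ uniformly in $H^1(Q)$.

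By Banach--Alaoglu, extract a subsequence along which $\textbf{h}_\epsilon\rightharpoonup\textbf{h}$ in $L^n(Q)^n$ and $\phi_\epsilon\rightharpoonup\phi$ in $H^1(Q)$. Testing against $\varphi\in C^\infty_c(Q)$, the distributional identities $\partial_k h_{j,\epsilon}-\partial_j h_{k,\epsilon}=b_{jk,\epsilon}$ pass to the limit (weak convergence of $\textbf{h}_\epsilon$ in $L^n$ handles the left-hand side, strong convergence of $B_\epsilon$ in $L^{n/2}$ handles the right), giving $\mathrm{curl}\,\textbf{h}=B$ a.e.\ on $Q$. Passing to the weak limit in $L^2(Q)^n$ of $\textbf{h}_\epsilon+\nabla\phi_\epsilon=\textbf{a}_\epsilon$ yields $\textbf{h}=\textbf{a}-\nabla\phi$ a.e.\ on $Q$. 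Finally, the quantitative bound \eqref{jaugebes1} follows from the weak lower semicontinuity of the $L^n$-norm combined with the strong convergence of $(\avert{Q}|B_\epsilon|^{n/2})^{2/n}$ to $(\avert{Q}|B|^{n/2})^{2/n}$.

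The only technical point worth emphasizing is justifying that the weak convergences suffice to pass to the limit in the curl identity and in the gauge identity simultaneously; given the uniform bounds, this is standard. No serious obstacle is expected.
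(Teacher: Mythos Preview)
Your argument is correct and follows essentially the same mollification strategy as the paper: regularize $\textbf{a}$ (hence $B$), apply Lemma~\ref{2th:jauge} to the smooth approximants, and pass to the limit. The only minor difference is that the paper exploits the linear dependence of the Iwatsuka construction on $B$ to obtain \emph{strong} convergence of $(\textbf{h}_m)$ in $L^n(Q)$ (so that the bound~\eqref{jaugebes1} follows directly), whereas you proceed via weak compactness, Poincar\'e--Wirtinger for the normalized $\phi_\epsilon$, and weak lower semicontinuity of the $L^n$-norm; both routes are standard and equally valid.
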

\begin{proof}
Let $(\textbf{a}_{m})_{m\geq 0}$ be the sequence of $C^{1}$ functions obtained by convolution with $\textbf{a}$ and converge in $L^{2}_{loc}$ to $\textbf{a}$. Set $(B_{m})_{m\geq 0}$, $(\phi_{m})_{m\geq 0}$ and $(\textbf{h}_{m})_{m\geq 0}$ as the corresponding sequences of the Lemma \ref{2th:jauge}. Note that $(\textbf{h}_{m})_{m\geq 0}$ converges in $L^{n}(Q,\mathbb{R}^n)$. Let $\textbf{h}$ be this limit, it satisfies \eqref{2phi}.
Note also that $(B_{m})_{m\geq 0}$ converges to $B$ in $L^{n/2}_{loc}(Q, M_{n}(\mathbb{R}))$ and  $curl \textbf{h}= B$ holds always every where in $Q$, where curl is defined in the sens of distribution. 
 
We know that for all $ m\geq 0$, $$ (\avert{Q} | \textbf{h}_{m}|^{n})^{1/n}\leq  c\,R(\avert{Q} |B_{m}|^{\frac{n}{2}})^{\frac{2}{n}},$$
uniformly in $m$. Then 
%$$ (\avert{Q} | \textbf{h}|^{n})^{1/n}=(\avert{Q} | \textbf{h}-\textbf{h}_{m}|^{n})^{1/n}+ (\avert{Q} | \textbf{h}_{m}|^{n})^{1/n}\leq (\avert{Q} | \textbf{h}-\textbf{h}_{m}|^{n})^{1/n}+ c\,R(\avert{Q} |B_{m}|^{n/2})^{2/n}$$
applying the limit, we obtain
 $$(\avert{Q} | \textbf{h}|^{n})^{1/n}\leq  c\,R(\avert{Q} |B|^{\frac{n}{2}})^{\frac{2}{n}}.$$
 Hence inequality \eqref{2phi} follows easily.
 \end{proof}

 \section{Pure magnetic Schr\"odinger operator}
This section is devoted to establish $L^p$ estimates on Riesz transforms of $H(\textbf{a},0)$  as well as its converse. Since the electric potential is absent, we cannot follow the methods of \cite{AB}. An analogous approach based on local estimates requires different localization techniques. We also use a Calder\`on-Zygmund decomposition adapted to the presence of magnetic field via the gauge transform previously established.

  \subsection{Reverse estimates }

In the absence of electric potential, the theorem \ref{th:2b} is of the form:

  %Notons, qu'withun champ magnétique plus régulier, et withla m\^eme preuve que celle du Theorem \ref{th:1c} on peut avoir une estimation plus interessante comme le décrit le Theorem suivant: 
  \begin{theo}\label{th:1b}Suppose $\textbf{a}\in
L^{2}_{loc}(\mathbb{R}^{n})^{n}$ and $| B |  \in RH_{n/2}$.

 Then, for all $1<p<\infty$, there exists $C_{p}>0$,
such that 
\begin{equation}\label{eq:rev Lp}
\|H(\textbf{a},0)^{1/2}f\|_{p}\leq C_{p}\big{(}\| Lf\|_{p}+ \| |B|^{1/2}\,f\|_{p}\big{)} 
\end{equation} 
 for all $f\in C_{0}^{\infty}(\mathbb{R}^{n})$.
There is a constant $C>0$ such that
\begin{equation}\label{eq:wt}|\{x\in \mathbb{R}^n\, ; \,  |H(\textbf{a},0)^{1/2} f(x)| > \alpha\}| \le \frac{C}{\alpha} \int |L f| + |B|^{1/2}|f|,
\end{equation}
 for $\alpha>0$ and for all $f\in C_{0}^{\infty}(\mathbb{R}^{n})$.

 \end{theo}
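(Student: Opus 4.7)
The plan is to interpolate between endpoint cases. The case $p=2$ is immediate: the form identity \eqref{LL2} with $V=0$ gives $\|H(\textbf{a},0)^{1/2}f\|_{2} = \|Lf\|_{2}$, so \eqref{eq:rev Lp} holds at $p=2$ with constant one. For $2<p<\infty$ I would argue by duality from Sikora's result, recalled in the introduction, that the Riesz transforms $L_{j}H(\textbf{a},0)^{-1/2}$ are of weak type $(1,1)$ and hence bounded on $L^{p'}$ for every $1<p'\leq 2$. Since each $L_{j}$ is self-adjoint, the adjoint operators $H(\textbf{a},0)^{-1/2}L_{j}$ are bounded on $L^{p}$ for $2\leq p<\infty$. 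Using that $Hf=-\sum_{j}L_{j}^{2}f$ in the distributional (and $L^{p}$) sense for $f\in C_{0}^{\infty}(\mathbb{R}^{n})$, one writes
\begin{equation*}
H(\textbf{a},0)^{1/2}f \;=\; H(\textbf{a},0)^{-1/2}\,Hf \;=\; -\sum_{j=1}^{n}\bigl(H(\textbf{a},0)^{-1/2}L_{j}\bigr)(L_{j}f),
\end{equation*}
which yields $\|H(\textbf{a},0)^{1/2}f\|_{p}\leq C\|Lf\|_{p}$; the term $\||B|^{1/2}f\|_{p}$ is not needed in this range.

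For $1<p<2$ it suffices to prove the weak-type $(1,1)$ estimate \eqref{eq:wt} and interpolate with the $L^{2}$ bound above. This is where the substance of the proof lies. Fix $\alpha>0$, set $F=|Lf|+|B|^{1/2}|f|$, and apply a Calder\'on-Zygmund decomposition to $F$ at level $\alpha$ to obtain pairwise disjoint cubes $(Q_{i})_{i}$ with $\avert{Q_{i}}F\simeq\alpha$, $F\leq C\alpha$ a.e.\ outside $\Omega=\bigcup_{i}Q_{i}$, and $\sum_{i}|Q_{i}|\leq C\alpha^{-1}\|F\|_{1}$. Write $f=g+\sum_{i}b_{i}$ with $b_{i}$ supported in $Q_{i}$, adjusted so that $|Lg|\leq C\alpha$ pointwise. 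The contribution of $g$ to the level set is handled by Chebyshev together with the $p=2$ identity: $|\{|H(\textbf{a},0)^{1/2}g|>\alpha/2\}|\leq 4\alpha^{-2}\|Lg\|_{2}^{2}\leq C\alpha^{-1}\|F\|_{1}$.

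The delicate contribution is $\sum_{i}H(\textbf{a},0)^{1/2}b_{i}$ on the complement of $\Omega^{*}=\bigcup_{i}2Q_{i}$, and it is here that the magnetic structure enters nontrivially. On each $Q_{i}$ I would invoke the Iwatsuka gauge transform of Corollary \ref{jaugebes1} to write $\textbf{a}=\textbf{h}_{i}+\nabla\phi_{i}$ with $\bigl(\avert{Q_{i}}|\textbf{h}_{i}|^{n}\bigr)^{1/n}\lesssim R_{i}\bigl(\avert{Q_{i}}|B|^{n/2}\bigr)^{2/n}$, thereby gauging away the original potential and reducing the local analysis to the operator $H(\textbf{h}_{i},0)$, against which the quantity $|B|^{1/2}b_{i}$ is controllable thanks to the $RH_{n/2}$ assumption on $|B|$ and its self-improvement to a slightly larger exponent. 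The tails of $H(\textbf{a},0)^{1/2}b_{i}$ on $\mathbb{R}^{n}\setminus 2Q_{i}$ are then estimated by combining subordination of $H^{1/2}$ to the semigroup $e^{-tH(\textbf{a},0)}$ with the Gaussian off-diagonal decay of the heat kernel (which is available uniformly in the magnetic potential by the Kato-Simon inequality \eqref{eq:KS} and the diamagnetic inequality \eqref{2diam}), so that after summing over $i$ one gains a bound of order $\alpha^{-1}\|F\|_{1}$.

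The hard part will be this bad-part estimate: assembling the gauge change, the heat-semigroup off-diagonal decay, and the $RH_{n/2}$ self-improvement of $|B|$ so that the tails outside $\Omega^{*}$ are absolutely summable with the correct dependence on $\alpha$. The improved Fefferman-Phong inequality of Lemma \ref{FP} applied with $\omega=|B|$ (which lies in $A_{\infty}$) should be the right tool for converting the local $L^{2}$-control of $L b_{i}$ into control of $|B|^{1/2}b_{i}$ on $Q_{i}$, closing the estimate and giving \eqref{eq:wt}; strong $L^{p}$ boundedness for $1<p<2$ then follows by the Marcinkiewicz interpolation theorem.
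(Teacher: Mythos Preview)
Your overall architecture matches the paper's: $p=2$ is immediate, $p>2$ follows by duality from the known $L^{p'}$ bound for $LH(\textbf{a},0)^{-1/2}$ (no hypothesis on $B$ is needed there), and for $1\le p<2$ the paper reduces to a weak-type $(1,1)$ bound via an adapted Calder\'on--Zygmund decomposition (Lemma~\ref{th:CZ1}) plus Gaussian bounds on the \emph{time derivatives} of the heat kernel of $e^{-tH}$, following \cite{AB}, Theorem~1.2.

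There are, however, two concrete gaps in your Calder\'on--Zygmund sketch. First, the pointwise bound $|Lg|\le C\alpha$ is not available in the magnetic case: the Iwatsuka gauge $\textbf{h}_k$ of Corollary~\ref{jaugebes1} lives only in $L^{n}(Q_k)$, so the terms it contributes to $Lg$ on $\Omega$ are controlled only in $L^n$, not $L^\infty$. What Lemma~\ref{th:CZ1} actually delivers is
\[
\|Lg\|_{n}+\||B|^{1/2}g\|_{n}\ \le\ C\,\alpha^{1-p/n}\bigl(\|Lf\|_{p}+\||B|^{1/2}f\|_{p}\bigr)^{p/n},
\]
which still yields the $L^2$ bound you need for the good part via Chebyshev. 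Second, you never specify the $b_i$: decomposing the scalar $F=|Lf|+|B|^{1/2}|f|$ gives cubes but not a splitting of $f$ itself. The paper, on a Whitney decomposition of $\Omega$, distinguishes cubes of type~1 (those with $R_k^{2}\avert{Q_k}|B|>1$), where $b_k=f\chi_k$, from type~2 cubes, where one must subtract a \emph{gauge-adjusted} mean,
\[
b_k=\bigl(f-e^{-i\phi_k}m_{2Q_k}(e^{i\phi_k}f)\bigr)\chi_k;
\]
without this subtraction the estimate $\int_{Q_k}R_k^{-p}|b_k|^p\le C\alpha^p|Q_k|$ fails on type~2 cubes, and the type~1/type~2 dichotomy is precisely what lets the term $|B|^{1/2}f$ compensate for the missing cancellation on type~1 cubes. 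Incidentally, the Fefferman--Phong inequality runs opposite to your description: it bounds $R_k^{-p}\int_{Q_k}|u|^p$ from above by $\int_{Q_k}|Lu|^p+|B|^{p/2}|u|^p$, and is used in the paper to verify \eqref{eq:czc} on type~1 cubes, not to recover $|B|^{1/2}b_i$ from $Lb_i$.
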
 
 \begin{proof} We follow step by step the proof of the Theorem 1.2 of \cite{AB} once the appropriate Calder\'on-Zygmund decomposition \ref{th:CZ1} is established. We also use the fact that the time derivatives  of the kernel of semigroup $ e ^{-tH} $ satisfy Gaussian estimates (see \cite{CD}, \cite{Da}, \cite{G} and \cite{Ou} Or, theorem 6.17).
 \end{proof}
  Lets introduce the main technical lemma of this work, intersting in its own right:
 \begin{lemm}\label{th:CZ1} Let $1\leq p<n$ and $\alpha>0$. Suppose $\textbf{a}\in L^{2}_{loc}(\mathbb{R}^{n})^{n}$ and $|B| \in RH_{n/2}$. Let $f\in C^{\infty}_{0}(\mathbb{R} ^n)$ hence $$ \| L f
 \|_{p}+\| |B|^{1/2}f \|_{p} <\infty.$$ Then, one can find a collection of cubes $(Q_k)$ and functions $g$ and $b_k$  such that 
 \begin{equation}\label{eq:cza} f=g+\sum_{k} b_k,
 \end{equation}
 and the following properties hold:
 \begin{equation}\label{eq:czb}\| L g \|_{n}+\| |B|^{1/2} g \|_{n} \leq C\alpha^{1-\frac{p}{n} }\big{(} \| L f \|_{p}+\| |B|^{1/2} f \|_{p} \big{)}^{p/n}
 \end{equation}
 \begin{equation}\label{eq:czc}\qquad \int_{Q_k}| L b_{k} |^{p}+ R_{k}^{-p} | b_{k}|^{p}  \leq C\alpha^{p} | Q_{k} |
 \end{equation}
 \begin{equation}\label{eq:czd} \sum_{k}| Q_k | \leq C\alpha ^{-p}\big{(}\int_{\mathbf{R} ^{n}} | L f |^{p}+| |B|^{1/2}f |^{p} \big{)}
 \end{equation}
 \begin{equation}\label{eq:cze}\sum_{k} \mathbf{1}_{Q_k} \leq N,
 \end{equation}
where 
 $N$ depends only on  the dimension and $C$ on the dimension,  $p$ and the $RH_{n/2}$ constant of $|B|$. Here, $R_{k}$ denotes the sidelength of $Q_{k}$ and gradients are taken in the sense of distributions in $\mathbb{R}^n$.
 %L'espace $W_{0}^{1,1}(Q_k)$ est la fermeture de $C_{0}^{\infty}(Q_k)$ dans $W^{1,1}(Q_k)$
 \end{lemm}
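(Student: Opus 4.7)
The plan is to prove Lemma \ref{th:CZ1} by a Whitney stopping-time argument applied to $F := |Lf|^p + |B|^{p/2}|f|^p$ at level $\alpha^p$, combined with a cube-by-cube use of the Iwatsuka gauge (Corollary \ref{jaugebes1}) to construct the bad functions in a gauge-covariant way. The strategy parallels Lemma~4.1 of \cite{AB}; the essential novelty is the local replacement of $\mathbf{a}$ by the small gauge $\mathbf{h}_k$ on each Whitney cube, so that the classical Calder\'on--Zygmund manipulations go through without the magnetic potential interfering.

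First, let $\mathcal{M}$ denote the Hardy--Littlewood maximal operator and set
\[
E_\alpha := \{x \in \mathbb{R}^n : \mathcal{M}F(x) > \alpha^p\}.
\]
The weak $(1,1)$ bound for $\mathcal{M}$ gives $|E_\alpha|\leq C\alpha^{-p}\int F$, hence \eqref{eq:czd}. Perform the standard Whitney decomposition $E_\alpha = \bigcup_k Q_k$: the $Q_k$ have disjoint interiors (giving \eqref{eq:cze}), their dilates $2Q_k$ have bounded overlap, and $4Q_k\cap E_\alpha^c\neq\emptyset$ for every $k$. Fix a subordinate partition of unity $(\chi_k)$ with $\chi_k\in C_c^\infty(2Q_k)$, $|\nabla\chi_k|\leq C/R_k$, and $\sum_k\chi_k = \mathbf{1}_{E_\alpha}$. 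On each $2Q_k$ apply Corollary \ref{jaugebes1} to obtain $\mathbf{h}_k$, $\phi_k$ with $\mathbf{a} = \mathbf{h}_k + \nabla\phi_k$ on $2Q_k$ and
\[
\Big(\avert{2Q_k}|\mathbf{h}_k|^n\Big)^{1/n}\leq CR_k\avert{2Q_k}|B|,
\]
the last step using $|B|\in RH_{n/2}$.

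Set $u_k := e^{-i\phi_k}f$ on $2Q_k$, and, following the dichotomy of \cite{AB}, define $c_k$ to be the gauge-covariant average $\avert{2Q_k}u_k$ when $R_k^2\avert{2Q_k}|B|\leq 1$ and $c_k := 0$ otherwise. Set
\[
b_k := \chi_k\,e^{i\phi_k}(u_k - c_k), \qquad g := f - \sum_k b_k.
\]
The gauge-covariance identity $L(e^{i\phi_k}w) = e^{i\phi_k}(\tfrac{1}{i}\nabla - \mathbf{h}_k)w$ gives $|Lf| = |(\tfrac{1}{i}\nabla - \mathbf{h}_k)u_k|$ on $2Q_k$, hence $|\nabla u_k|\leq|Lf| + |\mathbf{h}_k||f|$ pointwise. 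Differentiating $b_k$ yields
\[
|Lb_k|\leq C\bigl(\chi_k|Lf| + |\nabla\chi_k|\,|u_k - c_k| + \chi_k|c_k|\,|\mathbf{h}_k|\bigr),
\]
and the three terms are estimated in $L^p(2Q_k)$ by, respectively, the Whitney good-point condition on $4Q_k$; Poincar\'e applied to $u_k$ (when $c_k$ is the average) or the direct control of $|f|$ via the Sobolev--Poincar\'e inequality and the diamagnetic inequality \eqref{2diam} (when $c_k = 0$), combined in both cases with the $L^n$ bound on $\mathbf{h}_k$ and H\"older; and $|c_k|\leq\avert{2Q_k}|f|$ together with H\"older and the $RH_{n/2}$-improved bound on $\mathbf{h}_k$. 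The $R_k^{-p}|b_k|^p$ bound follows from the same ingredients, completing \eqref{eq:czc}.

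For \eqref{eq:czb}, the identity $\sum_k\chi_k = \mathbf{1}_{E_\alpha}$ gives $g = \mathbf{1}_{E_\alpha^c}f + \sum_k\chi_k e^{i\phi_k}c_k$. On $E_\alpha^c$ the pointwise bound $|Lf|^p + |B|^{p/2}|f|^p\leq\alpha^p$ from Lebesgue differentiation yields $\int_{E_\alpha^c}(|Lg|^n + |B|^{n/2}|g|^n)\leq C\alpha^{n-p}\int F$. On each $Q_k\subset E_\alpha$ only finitely many neighbours contribute, and the identity $L(\chi_j e^{i\phi_j}) = e^{i\phi_j}(\tfrac{1}{i}\nabla\chi_j - \chi_j\mathbf{h}_j)$ reduces everything to integrals of $(|\nabla\chi_j|+\chi_j|\mathbf{h}_j|)|c_j|$ and $|B|^{1/2}\chi_j|c_j|$, each bounded by $C\alpha^n|Q_k|$ via the good-point value of $\mathcal{M}F$ on $4Q_j$, the $L^n$ control on $\mathbf{h}_j$, and the $RH_{n/2}$ conversion of $\int|B|^{n/2}$ to a power of $\avert{}|B|$. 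Summing in $k$ and using \eqref{eq:czd} yields \eqref{eq:czb}. The principal obstacle is the conversion of the integral bound on $\mathbf{h}_k$ into $L^p$ and pointwise control of the mixed quantities $|\mathbf{h}_k||c_k|$ and $|\mathbf{h}_k||f|$ appearing above; this hinges on the case analysis defining $c_k$, on H\"older at the critical Sobolev exponent $np/(n-p)$, on the diamagnetic inequality, and on the self-improving character of $RH_{n/2}$.
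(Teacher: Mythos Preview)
Your overall architecture matches the paper's: Whitney cubes from the level set of $\mathcal{M}(|Lf|^p+|B|^{p/2}|f|^p)$, the dichotomy type~1/type~2 according to whether $R_k^2\avert{2Q_k}|B|$ exceeds $1$, the Iwatsuka gauge on each cube, and the definition $b_k=\chi_k\bigl(f-e^{i\phi_k}c_k\bigr)$ with $c_k$ the gauge-twisted average on type-2 cubes and $c_k=0$ on type-1 cubes. Your treatment of \eqref{eq:czc}, \eqref{eq:czd}, \eqref{eq:cze} and of $\||B|^{1/2}g\|_n$ is essentially the paper's.

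The gap is in your estimate of $\|Lg\|_n$ on $E_\alpha$. You write that $Lg$ reduces to sums of $(|\nabla\chi_j|+\chi_j|\mathbf{h}_j|)|c_j|$, \emph{each} bounded by $C\alpha$. The $\chi_j|\mathbf{h}_j||c_j|$ part is fine (this is the term $G_2$ in the paper). But the claim that $|\nabla\chi_j|\,|c_j|\lesssim\alpha$, i.e.\ $|c_j|/R_j\lesssim\alpha$, is false in general: on a type-2 cube $R_j^2\avert{2Q_j}|B|\le 1$, and the good-point condition only controls $|Lf|$ and $|B|^{1/2}|f|$. Fefferman--Phong then gives $(\avert{2Q_j}|B|)^{1/2}\avert{2Q_j}|f|\lesssim\alpha$, which is much weaker than $R_j^{-1}\avert{2Q_j}|f|\lesssim\alpha$ precisely because $(\avert{2Q_j}|B|)^{1/2}\le R_j^{-1}$ on type-2 cubes. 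So the $\nabla\chi_j$ terms cannot be bounded individually.

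What the paper does (and what is genuinely new over \cite{AB}) is to exploit the cancellation $\sum_k\nabla\chi_k=0$ on $\Omega$. In the non-magnetic case this is immediate because the subtracted averages are scalars; here the ``constants'' $e^{i\phi_k}c_k$ carry cube-dependent phases, so one cannot factor and cancel directly. The paper fixes a single reference gauge $\tilde\phi_m$ on an enlarged cube $\tilde Q_m$ containing all neighbours of $Q_m$, and splits
\[
\sum_{k}e^{-i\phi_k}c_k\,\nabla\chi_k = I+II+III,
\]
where $I$ compares $e^{-i\phi_k}c_k$ to $e^{-i\tilde\phi_m}m_{2Q_k}(e^{i\tilde\phi_m}f)$ (controlled via $\nabla(\phi_k-\tilde\phi_m)=\tilde{\mathbf{h}}_m-\mathbf{h}_k$ and Poincar\'e), $II$ replaces the averages over $2Q_k$ by the average over $\tilde Q_m$ in the fixed gauge (Poincar\'e again), and $III$ carries the common coefficient $e^{-i\tilde\phi_m}m_{\tilde Q_m}(e^{i\tilde\phi_m}f)$ so that $\sum_k\nabla\chi_k=0$ finally applies. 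This reference-gauge comparison is the heart of the argument and is missing from your sketch; without it \eqref{eq:czb} does not follow.
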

 \begin{rem} Note that by \eqref{eq:czb} for $p<2$, we obtain: 
 \begin{equation}\| L g \|_{2}+\| |B|^{1/2} g \|_{2} \leq C\alpha^{1-\frac{p}{2} }\big{(} \| L f \|_{p}+\| |B|^{1/2} f \|_{p} \big{)}^{p/2},
 \end{equation}
We will use this inequality to prove \ref{th:1b}.
 \end{rem}
 The rest of the section is devoted to the demonstration of Lemma \ref{th:CZ1}.
 
 %\paragraph{\textbf{Preuve du lemme \ref{th:CZ1}}:}
  \begin{proof}
Let $\Omega$ be the open set $\{ x\in \mathbb{R}^{n}; M\big{(}| L f|^{p} +| |B|^{1/2}f |^{p} \big{)}(x)>\alpha^{p} \}$, where $M$ is the uncentered maximal operator over the cubes of $ \mathbb{R}^{n}$. If $\Omega$ is empty, then set $g=f$ and $b_{i}=0$. Otherwise, our argument is subdivided into six steps.\\
 \\
 \textbf{a) Construction of the cubes:}\\
 \\
The maximal theorem gives us $$| \Omega | \leq C{\alpha}^{-p}\int_{\mathbb{R}^n} | L f|^{p} +| |B|^{1/2}f |^{p}<\infty .$$
 %Let $F=\RR^n\setminus \Omega$, d'après le Theorem de dérivation de Lebesgue, on a
% \begin{equation}| Lf|^{p} +| |B|^{1/2}f |^{p} \leq \alpha^{p}, \qquad \textrm{ presque partout sur}\,F.
 %\end{equation}
 Let $(Q_k)$ be a Whitney decomposition of $\Omega$ by dyadic cubes so to say $\Omega$ is the disjoint union of the $Q_k$'s, 
the cubes $2Q_i$ are contained in  $\Omega$ and have the bounded overlap property, but the cubes $4Q_k$ intersect $F=\mathbb{R}^n\setminus \Omega$.\footnote{In fact, the factor 2 should be some $c=c(n)>1$ explicitely given in [\cite{St},Chapter 6]. We use this convention to avoid too many irrelevant constants.}
 
Hence $$\sum_{k} | 2 Q_k | \leq C | \Omega | \leq C{\alpha}^{-p}\int_{\mathbb{R}^n} | L f|^{p} +| |B|^{1/2}f |^{p} .$$ \eqref{eq:czd} and
 \eqref{eq:cze} are satisfied by the cubes $2Q_k$.\\
 \\
 \textbf{b) Construction of $b_{k}$:}\\ 
 \\
 Let $(\chi_k)$ be a partition of unity on $\Omega$
associated to the covering $(Q_k)$ so that for each $k$, $\chi_k$ is a $C^1$ function supported in $2Q_k$ with 
 \begin{equation}\label{part}
 \| \chi_{k} \|_\infty + R_k \| \nabla {\chi_{k}} \| _{\infty} \leq c(n),
 \end{equation}
  where $R_k$ is the sidelength of $Q_k$ and $\sum \chi_{k} =1$ on $\Omega$.
  We say that a cube $Q$ is of type 1 if 
  $R^{2}\avert {Q} |B| > 1$,
 and is of type 2 if $R^{2}\avert {Q} |B| > 1$.
  
  We apply the gauge transformation on the cubes
$ 2Q_k$ such that $ Q_k$ is of type 2, hence there exist $\textbf{h}_{k}\in L^{n}(2Q_{k},\mathbb{R}^{n})$ and a real function $\phi_{k} \in H^{1}(2Q_{k})$ such that
\begin{equation}\textbf{h}_{k}=\textbf{a}- \nabla \phi_{k} \qquad \textrm{a.e on }\,\, 2Q_{k},
 \end{equation}
 
 \begin{equation}\label{jaugecz}\big{(}\avert{2Q_{k}} | \textbf{h}_{k}|^{n}\big{)}^{1/n}\leq  c\,R_{k}\big{(}\avert{2Q_{k}} |B|^{n/2}\big{)}^{2/n}.
\end{equation}
%where  $c$ dépend seulement de $n$
We denote $$m_{2Q_{k}}(e^{i\phi_{k}}f)= \avert{2Q_{k}} (e^{i\phi_{k}}f).$$ 
Let
 \begin{equation}
 b_{k}=\left\{\begin{array}{ll}
 f \chi_{k}, \,& \mathrm{if}\ Q_{k}\ \mathrm{is\ of\ type\ 1}\\
 \big{(}f-e^{-i\phi_{k}}m_{2Q_{k}}(e^{i\phi_{k}}f)\big{)} \chi_{k},& \mathrm{if}\ Q_{k}\ \mathrm{is \ of \ type\  2}.

 \end{array}
 \right.
 \end{equation}
  %On introduit $\tilde{Q_k}$ le cube centré en $x_k$ de c\^oté  $4R_k$, $2Q_{k}$ est clairement dans $\tilde{Q_k}$.
  \textbf{c) Proof of estimate \eqref{eq:czc} :}\\
  \\
  Suppose $Q_k$ is of type 1, then $$R_{k}^{-p}\leq  c\big{(}\avert {2Q_{k}} |B| \big{)}^{p/2}\leq C \avert {2Q_{k}} |B| ^{p/2},$$
  where we used $|B|^{p/2} \in RH_{2/p}$ if $p<2$ (by proposition \ref{11.1}) and the Jensen's inequality with convex function $t\longmapsto t^{p/2}$ if $p\geq 2$.
  
In order to control  $L\,b_{k}$, we have
$$L b_{k}= L( f\chi_{k})=(Lf)\chi_{k}+\frac{1}{i}f\,\nabla\chi_{k},$$
then
$$\int_{2Q_k}| L b_{k} |^{p}+ R_{k}^{-p} | b_{k}|^{p}  \leq C \| \chi_{k} \|^{p} _{\infty}\int_{2Q_k}| L f |^{p} +\| \nabla\chi_{k} \|^{p} _{\infty}\int_{2Q_k} | f |^{p} +R_{k}^{-p} \| \chi_{k} \|^{p} _{\infty}\int_{2Q_k} | f |^{p}$$$$ \leq C \{\int_{2Q_k}| L f |^{p} + R_{k}^{-p}\int_{2Q_{k}}| f |^{p} \} \leq C \{ \int_{2Q_k} | L f |^{p} + | |B|^{1/2}f |^{p}  \}\leq C\alpha^{p}  |{Q_k} |,$$
 where we used  the $L^p$ version of the  usual Fefferman-Phong inequality \eqref{eq:feff} and the intersection of $4Q_{k}$ with $F$, hence $\int_{4Q_k} | L f |^{p} + | |B|^{1/2}f |^{p}  \leq C\alpha^{p}  |{4Q_k} |$. Then estimation \eqref{eq:czc} holds for the cubes of type 1.
 
  If $Q_k$ is of type 2,  $R_{k}^{2}\avert{Q_{k}} |B| \leq 1$. $|B(x)|dx$ is a doubling measure, then there exists $C>0$, such that $R_{k}^{2}\avert{Q_{2k}} |B| \leq C$.
  $$b_{k}=\big{(}f-e^{-i\phi_{k}}m_{2Q_{k}}(e^{i\phi_{k}}f)\big{)} \chi_{k}.$$
 Let us estimate $L\,b_{k}$. By the Gauge invariance, all we require is the estimation of $\tilde{L}(e^{i\phi_{k}}b_{k})$, where $$\tilde{L}=\frac{1}{i}\nabla-\textbf{h}_{k}.$$
 We have
 $$ \tilde{L}(e^{i\phi_{k}}b_{k}) = \chi_{k}\big{(}\tilde{L} f_{k})+ \frac{1}{i}\big{(}f_{k}-m_{2Q_{k}}f_{k}\big{)} \nabla\chi _{k}-\big{(}\avert{2Q_{k}} f_{k}\big{)}\, \chi_{k}\,  \textbf{h}_{k},$$
where $f_{k}=e^{i\phi_{k}}f$. Then,
     $$\big{(}\avert{2Q_{k}} |L b_k|^{p}\big{)}^{1/p}\leq C \{ \big{(}\avert{2Q_{k}}|\tilde{L} f_{k}|^{p}\big{)}^{1/p}\, ||\chi_{k}||_{\infty}+ \big{(}\avert{2Q_{k}}|(f_{k}-m_{2Q_{k}}f_{k})|^{p}\big{)}^{1/p}\,|| \nabla\chi _{k}||_{\infty} $$$$+\big{(}\avert{2Q_{k}} |\textbf{h}_{k}|^{p}\avert{2Q_{k}}| f_{k}|^{p}\big{)}^{1/p}\, ||\chi_{k}||_{\infty}\}.$$
Using the Poincar\'e inequality and condition \eqref{part}, we obtain $$\big{(}\avert{2Q_{k}} |\tilde{L} b_k|^{p}\big{)}^{1/p}\leq C \{ \big{(}\avert {2Q_{k}}|\tilde{L} f_{k}|^{p}\big{)}^{1/p}+ \big{(}\avert {2Q_{k}}|\nabla f_{k}|^{p}\big{)}^{1/p} +\big{(}\avert{ 2Q_{k}} |\textbf{h}_{k}|^{p}\avert{2Q_{k}}| f_{k}|^{p}\big{)}^{1/p}\}$$

 $$\leq C \{ \big{(}\avert{2Q_{k}}|\tilde{L} f_{k}|^{p}\big{)}^{1/p}+ \big{(}\avert{2Q_{k}}|\frac{1}{i}\nabla f_{k}-\textbf{h}_{k}f_{k}|^{p}\big{)}^{1/p}$$$$+ \big{(}\avert{ 2Q_{k}} |\textbf{h}_{k}|^{p}\avert{2Q_{k}}| f_{k}|^{p}\big{)}^{1/p}+ \big{(}\avert{ 2Q_{k}}|\textbf{h}_{k}f_{k}|^{p}\big{)}^{1/p}\}.$$
 Hence $$\big{(}\avert{2Q_{k}} |L b_k|^{p}\big{)}^{1/p}\leq C \{ \big{(}\avert{2Q_{k}}|\tilde{L} f_{k}|^{p}\big{)}^{1/p}+ I+II\}.$$
 
 Next, we apply inequality \eqref{jaugecz} to estimate $I$. The fact that $|B|$ is  a $RH_{n/2}$ weight and $Q_{k}$ is of type $2$ leads:  
\begin{align*}
\big{(}\avert{ 2Q_{k}} |\textbf{h}_{k}|^{p}\big{)}^{1/p}\big{(}\avert{2Q_{k}}| f_{k}|^{p}\big{)}^{1/p}&\leq \big{(}\avert{ 2Q_{k}} |\textbf{h}_{k}|^{n}\big{)}^{1/n}\big{(}\avert{2Q_{k}}| f_{k}|^{p}\big{)}^{1/p}
\\
& \leq C R_{k}\big{(}\avert{ 2Q_{k}} |B|^{n/2}\big{)}^{2/n}\big{(}\avert{2Q_{k}}| f_{k}|^{p}\big{)}^{1/p}
\\
&\leq C R_{k}\big{(}\avert{ 2Q_{k}} |B|\big{)}\big{(}\avert{2Q_{k}}| f_{k}|^{p}\big{)}^{1/p}
\\
&\leq C\big{(}\avert{ 2Q_{k}} |B|\big{)}^{1/2}\big{(}\avert{2Q_{k}}| f_{k}|^{p}\big{)}^{1/p}.
\end{align*}
 By Fefferman-Phong inequality \eqref{eq:feff},
 $$I\leq C \big{(}\big{(}\avert{2Q_{k}}|B|\big{)}^{p/2}\avert{ 2Q_{k}}|f_{k}|^{p}\big{)}^{1/p}\leq C \big{(}\avert{2Q_{k}}|B|^{p/2}\, \avert{ 2Q_{k}}|f_{k}|^{p}\big{)}^{1/p} \leq C \big{(}\avert{2Q_{k}}|\tilde{L}f_{k}|^{p}+||B|^{1/2}f_{k}|^{p}\big{)}^{1/p}.$$
 Hence
 \begin{equation}\label{eqI} I\leq C \avert{2Q_{k}}|\tilde{L}f_{k}|^{p}+||B|^{1/2}f_{k}|^{p}.
 \end{equation}
 
 To estimate the second term $II$, first we use the H\"older inequality and the fact that $|B|\in RH_{n/2}$ and  $Q_{k}$ is of type $2$. Next, we apply Poincar\'e inequality and the diamagnetic inequality ( under our hypothesis, $f_{k}\in W^{1,2}_{\textbf{a}}(\mathbb{R}^{n})$):
\begin{align*}
II=\big{(}\avert{ 2Q_{k}}|\textbf{h}_{k}f_{k}|^{p}\big{)}^{1/p}&\leq \big{(}\avert{ 2Q_{k}}|\textbf{h}_{k}|^{p.n/p}\big{)}^{p/pn}\big{(}\avert{ 2Q_{k}}|f_{k}|^{p.n/(n-p)}\big{)}^{(n-p)/pn}
\\
&\leq C R_{k} \big{(}\avert{ 2Q_{k}}|B|^{n/2}\big{)}^{2/n}\big{(}\avert{ 2Q_{k}}|f_{k}|^{pn/(n-p)}\big{)}^{(n-p)/pn}
\\
&\leq C R_{k} \big{(}\avert{ 2Q_{k}}|B|\big{)}\big{(}\avert{ 2Q_{k}}|f_{k}|^{pn/(n-p)}\big{)}^{(n-p)/pn}
\\
&\leq C R_{k} \big{(}\avert{ 2Q_{k}}|B|\big{)}\{\big{(}\avert{ 2Q_{k}}||f_{k}|-m_{2Q_{k}}\big{(}|f_{k}|\big{)}|^{p.n/(n-p)}\big{)}^{(n-p)/pn}+m_{2Q_{k}}\big{(}|f_{k}|\big{)}\}
\\
&\leq C \{R_{k}^{2} \big{(}\avert{2Q_{k}}|B|\big{)}\big{(}\avert{2Q_{k}} |\tilde{L} f_{k}|^{p}\big{)}^{1/p}+\big{(}\avert{2Q_{k}}|B|\big{)}^{1/2}\big{(}\avert{ 2Q_{k}}|f_{k}|\big{)}\}
\\
&\leq C \{\big{(}\avert{2Q_{k}} |\tilde{L} f_{k}|^{p}\big{)}^{1/p}+\big{(}\avert{2Q_{k}}|\tilde{L}f_{k}|^{p}+||B|^{1/2}f_{k}|^{p}\big{)}^{1/p}\}.
\end{align*}
%On a utilisé le fait que $2Q_{k}$ soit de type 2 ainsi que la version $L^{p}$ de l'inégalité de Fefferman-Phong. 
Then
\begin{equation}\label{eqII}II\leq C \big{(}\avert{2Q_{k}}|\tilde{L}f_{k}|^{p}+||B|^{1/2}f_{k}|^{p}\big{)}^{1/p}.
\end{equation}
 
Since $|L(f)|= |\tilde{L}(f_{k} )|$, then, by gauge invariance, 
$$\avert{2Q_{k}} |L b_k|^{p}\leq C \{ \avert{2Q_{k}}|L f|^{p}+||B|^{1/2} f|^{p}\}\leq c \alpha^{p}.$$
And by the same argument, we have
 $$R_{k}^{-p}\avert{2Q_{k}} |b_k|^{p}=R_{k}^{-p}\avert{2Q_{k}} |\big{(}f_{k}-m_{2Q_{k}}f_{k}\big{)} \chi _{k}|^{p}\leq C \alpha^{p}.$$
 
 %en appliquant l'inégalité de Poincaré,
 %$$R_{k}^{-p}\avert{2Q_{k}} |b_k|^{p}\leq C  \avert{2Q_{k}}|\frac{\partial f}{\partial x_{j}}|^{p}$$
 %$$\leq C \avert{2Q_{k}}|\tilde{L_j} f|^{p}+ |h_{k,j} f|^{p},$$
 %donc
 %$$R_{k}^{-p}\avert{2Q_{k}} |b_k|^{p}\leq C \{ \avert{2Q_{k}}|L f|^{p}+||B|^{1/2}f|^{p}\}\leq c \alpha^{p},$$
Thus \eqref{eq:czc} is proved .\\ 
\\
 \textbf{d) Definition and properties of $|B|^{\frac{1}{2}}g$:}\\ 
 \\
 Set $g=f- \sum b_{k}$. Note that, by \eqref{eq:cze}, this sum is locally finite.
 It is clear that $g=f$ on $F$ and $g=\sum_{k\in J}e^{-i\phi_{k}}m_{2Q_{k}}(e^{i\phi_{k}}f) \chi_{k}$ on $\Omega$, where $J$ is the set of indices $k$ such that $Q_{k}$ is of type 2.
 %Fixons un $ x\in \Omega$; et soit $Q_j$ le cube de Whitney contenant $x$ et soit $I_x$ l'ensemble des indices $k$ tel que $x\in 2Q_k$. On sait que $\sharp I_x \leq N$ et que pour les $k\in I_{x}$ on a $C^{-1} R_{k}\leq R_{j} \leq C R_{k}$ et $| x_{k} -x_{j} | \leq C R_{j}$ where $C$ est une constante dépendant seulement de la dimension ( voir [St1]). On sait aussi que $\sum_{k} \nabla \chi_{k}(x)=0$.
%oit $J$ l'ensemble d'indices $k$ tel que $Q_{k}$ soit de type 2.% En appliquant l'inégalité de Fefferman-Phong on obtient, $\forall k\in J ,| m_{2Q_{k}}f| \leq C \alpha  R_k$ ce qui donne  $$\sum_{k\in J}\avert{2Q_k}f \chi_{k} \leq C \alpha \sum_{k\in J}R_{k}\| \nabla \chi \|_{\infty}\leq C N \alpha $$
$$\int_{\mathbb{R}^n} | |B|^{1/2}g|^{n}=\int_{F} | |B|^{1/2}g|^{n}+\int_{\Omega} | |B|^{1/2}g|^{n}=I+II.$$
By construction,
$$I=\int_{F} | |B|^{1/2}g|^{n}= \int_{F} | |B|^{1/2} f|^{n}\leq c \alpha^{n-p} \big{(}\| Lf \|_{p}+\| |B|^{1/2} f\|_{p}\big{)}^{p}.$$
  Since $|B|^{1/2}\in RH_{n}$, and by the $L^1$ Fefferman-Phong inequality \eqref{eq:feff} on $2Q_{k}$, type 2 cubes, we obtain  $$II=\int_{\Omega} | |B|^{1/2}. g|^{n}\leq c \sum_{k\in J}|Q_{k}|[\avert{2Q_{k}}|B|^{1/2}\avert{2Q_{k}}| f |]^{n}\leq C \sum_{k\in J}|Q_{k}|\alpha^{n}$$
  $$\qquad \leq c \alpha^{n-p} \int_{\mathbb{R}^n} |Lf |^{p}+| |B|^{1/2} f|^{p}.$$
  Hence
  \begin{equation}\label{bg}
  \big{(}\int_{\mathbb{R}^n} | |B|^{1/2}  g|^{n}\big{)}^{1/n} \leq c \alpha^{1-\frac{p}{n}} \big{(}\| Lf \|_{p}+\| |B|^{1/2} f\|_{p}\big{)}^{p/n}.
  \end{equation}
  
  \textbf{e) Estimate of $Lg$:}\\
  \\  
  Let $K$ the set of indices $k$. Let $\xi\in C^{\infty}_{0}(\mathbb{R}^{n})$, a test function. We know that, for all $k\in K$ such that $x\in 2Q_{k}$, there exists $C>0$ such that $d(x,F)> C\,R_{k}$. Therefore,
  $$\int \sum_{k \in K} |b_{k}| |\xi|\leq C \big{(} \int \sum_{k\in K} \frac{|b_{k}|}{R_{k}}\big{)} \sup_{x\in \mathbb{R}^n}\big{(} d(x,F) |\xi(x)|\big{)}.$$
The estimate \eqref{eq:czc} gives us
$$\int  |b_{k}|^p\leq C{R_k}^p\alpha^{p} |Q_{k}|.$$
Hence
$$\int \sum_{k\in K} |b_{k}| |\xi|\leq C\alpha |\Omega| \sup_{x\in \Omega}\big{(} d(x,F) |\xi(x)|\big{)}.$$
We conclude that $\sum_{k\in K} b_{k}$ converges in the sense of distributions in $\mathbb{R}^n$.\\ Then,
$$\nabla g=\nabla f-\sum_{k\in K} \nabla b_{k},\,\, \textrm{in the sense of distributions in $\mathbb{R}^n$.}$$
 Since the sum is locally finite in $\Omega$ and vanishes on $F$, then $\textbf{a}\,g=\textbf{a}\,f - \sum_{k\in K}\textbf{a}\, b_{k}$ holds always every where in $\mathbb{R}^n$. Hence
$$Lg=Lf-\sum_{k\in K} L b_{k},\,\, \textrm{a.e in $\mathbb{R}^n$.}$$   
  \\
  \textbf{f) Proof of estimate \eqref{eq:czb}:}\\
  \\  
$\sum_{k\in K} \nabla \chi_{k}(x)=0$ for all $x\in \Omega$, then
 $$Lg = (L f) \mathbf{1}_{F}+ \sum_{k\in J}L(e^{-i\phi_{k}}\,m_{2Q_k}(e^{i\phi_{k}}f) \chi_{k})\qquad \textrm{a.e in $\mathbb{R}^n$}. $$
 Since $$L( u)= e^{-i \phi_{k}} \tilde{L}(e^{i\phi_{k}} u) \quad \textrm{where}\quad \tilde{L}=\frac{1}{i} \nabla -\textbf{h}_{k},$$ then
 $$\sum_{k\in J}L(e^{-i\phi_{k}}\,m_{2Q_k}(e^{i\phi_{k}}f) \chi_{k})=\frac{1}{i}\sum_{k\in J}e^{-i\phi_{k}}m_{2Q_k}(e^{i\phi_{k}}f) \nabla\chi_{k}-\sum_{k\in J}e^{-i\phi_{k}}\,m_{2Q_k}(e^{i\phi_{k}}f) \chi_{k}\textbf{h}_{k}$$$$=G_{1}+G_{2}.\,\,\,$$
Let us estimate $\|G_2\|_{n}$. First, we use \eqref{eq:cze}:
  $$\|G_{2}\|_{n}=\big{(}\int_{\Omega}|\sum_{k\in J}m_{2Q_k}(e^{i\phi_{k}}f) \chi_{k}\textbf{h}_{k}|^{n}\big{)}^{1/n}\leq C N^{\frac{n-1}{n}}\big{(} \sum_{k\in J}\int_{2Q_{k}}|m_{2Q_k}(e^{i\phi_{k}}f\big{)}\, \textbf{h}_{k}|^{n}\big{)}^{1/n}$$
 $$\leq C N^{\frac{n-1}{n}}\big{(}\sum_{k\in J}|2Q_{k}|\avert{2Q_{k}}|\textbf{h}_{k}|^{n}\,|m_{2Q_k}(e^{i\phi_{k}}f)|^n\big{)}^{1/n} .$$
 Lemma \ref{2jaugebes} and the fact that $|B|$ is a $RH_{n/2}$ weight function and $Q_{k}$ is a type 2 cube, yield  
\begin{align*}\|G_{2}\|_{n}
&\leq C N^{\frac{n-1}{n}}\big{(}\sum_{k\in J}|2Q_{k}|R^{n}_{k}\big{(}\avert{2Q_{k}}|B|^{n/2}\big{)}^{2}\,|m_{2Q_k}(e^{i\phi_{k}}f)|^n\big{)}^{1/n}
\\
& \leq C N^{\frac{n-1}{n}}\big{(}\sum_{k\in J}|2Q_{k}|\big{(}R_{k}\avert{2Q_{k}}|B|\,|m_{2Q_k}(e^{i\phi_{k}}f)|\big{)}^n\big{)}^{1/n}
\\
 & \leq C N^{\frac{n-1}{n}}\big{(}\sum_{k\in J}|2Q_{k}|\big{(}\big{(}\avert{2Q_{k}}|B|\big{)}^{1/2}\,|m_{2Q_k}(e^{i\phi_{k}}f)|\big{)}^n\big{)}^{1/n}
\\
 &\leq C N^{\frac{n-1}{n}}\big{(}\sum_{k\in J}|2Q_{k}|\big{(}\avert{2Q_{k}}|B|^{p/2}\,\avert{2Q_{k}}|f|^{p}\big{)}^{n/p}\big{)}^{1/n} 
\\
&\leq C N^{\frac{n-1}{n}}\alpha\big{(}\sum_{k\in J}|2Q_{k}|\big{)}^{1/n} \leq C N^{\frac{n-1}{n}}\alpha^{1-\frac{p}{n}}\big{(}\int_{\mathbb{R}^{n}}|Lf |^{p}+||B|^{1/2}f|^{p}\big{)}^{1/n}.
\end{align*}
 %where on a utilisé le fait que  $|B|^{p/2} \in RH_{2/p}$ ($p<2$),
We obtain
 \begin{equation}\label{eq:g2}\|G_{2}\|_{n} \leq C \alpha^{1-\frac{p}{n}}\big{(}\|Lf \|_{p}+\||B|^{1/2}f\|_{p}\big{)}^{p/n}.
 \end{equation}

  Recall that $G_{1}(x)=\sum_{k\in J}e^{-i\phi_{k}(x)}m_{2Q_{k}}(e^{i\phi_{k}}f) \nabla \chi_{k}(x)$. We will estimate $\|G_1\|_n$.
 For all $m\in K$, set $K_{m}=\{l\in K, 2Q_{l}\cap 2Q_{m}\neq\emptyset\}.$
 By construction of Whitney cubes, there exists a constant $c>0$ (we can take $c=18$) such that for all $m\in K$ $2Q_{l}\subset c \,Q_{m}$, for all $l\in K_m$. Set $\tilde{Q}_{m}=cQ_m$,
  $$G_{1}(x)=\sum_{k\in J}e^{-i\phi_{k}(x)}m_{2Q_{k}}(e^{i\phi_{k}}f) \nabla \chi_{k}(x)= \sum_{m\in K} \chi_{m}(x)\big{(}\sum_{k\in J\cap K_{m} }e^{-i\phi_{k}(x)}m_{2Q_{k}}(e^{i\phi_{k}}f) \nabla \chi_{k}(x) \big{)}.$$
 It suffices to prove
\begin{equation}\label{G1}\int_{2Q_{m}}\left|\sum_{k\in J\cap K_{m} }e^{-i\phi_{k}}m_{2Q_{k}}(e^{i\phi_{k}}f) \nabla \chi_{k}\right|^{n}\leq C \alpha^{n} |2Q_{m}|.
\end{equation}
 %uniformly on $m$
We fix an $m$, by the gauge transformation of corollary \ref{jaugebes1}, $\tilde{\textbf{h}}_{m}=\textbf{a}-\nabla \tilde{\phi}_{m}$ satisfies \eqref{jaugecz} on $\tilde{Q}_{m}$.\\
  \paragraph{\textit{First case:}} There exists $k_{0}\in J\cap K_{m}$ such that $2Q_{k_{0}}$ is of type 1.\\
 Since $|B(x)| dx$ is a doubling measure, there exists a constant $A>0$ which depends on $|B|$, such that for all $k\in K_{m}$, $$(2R_{k})^{2}\avert{2Q_{k}} |B| >A.$$
   $|B|^{1/2}\in RH_{2}$, which means that $R^{-1}_{k} \leq C \avert{2Q_{k}} |B|^{1/2}$, for all $k\in K_{m}$. Then
     $$\int_{2Q_{m}}\left|\sum_{k\in J\cap K_{m} }e^{-i\phi_{k}}m_{2Q_{k}}(e^{i\phi_{k}}f) \nabla \chi_{k}\right|^{n}\leq C\big(\sum_{k\in J\cap K_{m} }|Q_k|R^{-n}_{k}\big{(}\avert{2Q_{k}}|f|\big{)}^{n}\big) $$$$\leq C [\sum_{k\in J\cap K_{m} }|Q_k| R^{-n}_{m}\big{(}\avert{2Q_m}|f|\big{)}^{n}]^{1/n} \leq C |Q_{m}|  \alpha,$$ here we used $|Q_{k}|\sim |Q_{m}|$, \eqref{eq:cze}, Fefferman-Phong inequality \eqref{eq:feff} and $4Q_{m}\cap F\neq \emptyset$. \\
% \eqref{G1} 
   %$$\|\sum_{k\in J\cap K_{m} }e^{-i\phi_{k}}m_{2Q_{k}}(e^{i\phi_{k}}f) \nabla \chi_{k}\|_{n}\leq C\alpha^{1-\frac{p}{n}}\big{(}\|Lf \|_{p}+\||B|^{1/2}f\|_{p}\big{)}^{p/n}.$$

  \paragraph{\textit{Second case:}}$\forall k \in J\cap K_{m}$, $2Q_{k}$ is of type 2.
   \begin{align*}\sum_{k\in J\cap K_{m} }e^{-i\phi_{k}}m_{2Q_{k}}(e^{i\phi_{k}}f) \nabla \chi_{k}=
&\sum_{k\in J\cap K_{m} } \big{(}e^{-i\phi_{k}}m_{2Q_{k}}(e^{i\phi_{k}}f)-e^{-i\tilde{\phi}_{m}}m_{2Q_{k}}(e^{i\tilde{\phi}_{m}}f)\big{)} \nabla \chi_{k}
\\
&+\sum_{k\in J\cap K_{m} }e^{-i\tilde{\phi}_{m}}\big{(}m_{2Q_{k}}(e^{i\tilde{\phi}_{m}}f\big{)}-m_{\tilde{Q}_{m}}(e^{i\tilde{\phi}_{m}}f)\big{)} \nabla \chi_{k}
\\
&+\sum_{k\in J\cap K_{m} }e^{-i\tilde{\phi}_{m}}m_{\tilde{Q}_{m}}(e^{i\tilde{\phi}_{m}}f) \nabla \chi_{k}
\\
&=I+II+III.
\end{align*}
Thus
$$III=\sum_{k\in K_{m}} \chi_{m}e^{-i\tilde{\phi}_{m}}m_{\tilde{Q}_{m}}(e^{i\tilde{\phi}_{m}}f)  \nabla \chi_{k}-\sum_{k\in K_{m}\setminus J} \chi_{m}e^{-i\tilde{\phi}_{m}}m_{\tilde{Q}_{m}}(e^{i\tilde{\phi}_{m}}f)  \nabla \chi_{k}.$$
We know that $\sum_{k\in K_{m}} \nabla \chi_{k}(x)=\sum_{k\in K} \nabla \chi_{k}(x)=0,$ for all $x\in 2Q_{m}$, and hence the first term in the above expression vanishes .\\ Since $2Q_{k}$, with $k\in K_{m}\setminus J$, are type 1 cubes, then we obtain using the same procedure as in the first case
$$\int_{2Q_{m}}| III|^{n} \leq C |Q_{m}| \alpha.$$

 Now we will control the $L^{\infty}$ norm of $II$,
\begin{align*}
\left| \sum_{k\in J\cap K_{m} }e^{-i\tilde{\phi_{m}}(x)}\big{(}m_{2Q_{k}}e^{i\tilde{\phi}_{m}}f\,-m_{\tilde{Q}_{m}} e^{i\tilde{\phi}_{m}}f\big{)} \nabla \chi_{k}(x) \right|
& \leq  \sum_{k\in J\cap K_{m} }|m_{2Q_{k}}e^{i\tilde{\phi_{m}}}f-m_{\tilde{Q}_{m}}e^{i\tilde{\phi}_{m}}f| ||\nabla \chi_{k}||_{\infty}
\\
 & \leq C \sum_{k\in J\cap K_{m} } | m_{2Q_{k}}(e^{i\tilde{\phi}_{m}}f)-m_{\tilde{Q}_{m}}(e^{i\tilde{\phi}_{m}}f)| R_{k}^{-1},
 \end{align*}
since \begin{equation}\label{eq:av}
| m_{2Q_{k}}(e^{i\tilde{\phi}_{m}}f)-m_{\tilde{Q}_{m}}(e^{i\tilde{\phi}_{m}}f)| \leq C \tilde{R_{m}}\alpha,
\end{equation}
then $$| \sum_{k}e^{-i\tilde{\phi_{m}}(x)}\big{(}m_{2Q_{k}}(e^{i\tilde{\phi_{m}}}f)-m_{\tilde{Q_{m}}}(e^{i\tilde{\phi_{m}}}f)) \nabla \chi_{k}(x) | \leq C N\alpha,$$

 It suffices to prove \eqref{eq:av}:  
\begin{align*}| m_{2Q_{k}}(e^{i\tilde{\phi}_{m}}f)-m_{\tilde{Q}_{m}}(e^{i\tilde{\phi}_{m}}f)|& \leq C | m_{\tilde{Q}_{m}}(e^{i\tilde{\phi}_{m}}f-m_{2Q_{k}}(e^{i\tilde{\phi}_{m}}f))| 
\\
&\leq C R_{k} \big{(}m_{\tilde{Q}_{m}}(|\nabla (e^{i\tilde{\phi}_{m}}f)|^{p}\big{)}^{1/p} 
\\
&\leq C \tilde{R}_{m} \{\big{(}m_{\tilde{Q}_{m}}(|\tilde{L}( e^{i\tilde{\phi}_{m}}f)|)^{p}\big{)}^{1/p}+\big{(}m_{\tilde{Q}_{m}}(|\textbf{h}_{m}e^{i\tilde{\phi}_{m}}f|)^{p}\big{)}^{1/p}\} 
\\
&\leq C \tilde{R}_{m} \{\big{(}m_{\tilde{Q}_{m}}(|Lf|)^{p}\big{)}^{1/p}+\big{(}m_{\tilde{Q}_{m}}(|B^{1/2}f|^{p}\big{)}^{1/p}\} 
\end{align*}
where $\tilde{L}=\frac{1}{i} \nabla -\tilde{\textbf{h}}_{m}$ and $L( f)= e^{-i\tilde{\phi}_{m}} \tilde{L}(e^{i\tilde{\phi}_{m}}f)$.

Lastly we estimate $I$: \begin{align*}e^{-i\phi_{k}(x)}m_{2Q_{k}}(e^{i\phi_{k}}f)-e^{-i\tilde{\phi}_{m}(x)}m_{2Q_{k}}(e^{i\tilde{\phi}_{m}}f)=& e^{-i\phi_{k}(x)}\avert{2Q_{k}}e^{i\phi_{k}(y)}f(y)\,dy-e^{-i\tilde{\phi}_{m}(x)}\avert{2Q_{k}}e^{i\tilde{\phi}_{m}(y)}f(y)\,dy
\\
&=\avert{2Q_{k}}\big{(}e^{i(\phi_{k}(y)-\phi_{k}(x))}-e^{i(\tilde{\phi}_{m}(y)-\tilde{\phi}_{m}(x))}\big{)}f(y)\,dy.
\end{align*}
Next, we use the following inequality
 $$|e^{i(\phi_{k}(y)-\phi_{k}(x))}-e^{i(\tilde{\phi}_{m}(y)-\tilde{\phi}_{m}(x))}|\leq  |(\phi_{k}(y)-\phi_{k}(x))-(\tilde{\phi}_{m}(y)-\tilde{\phi}_{m}(x))|,$$
and we obtain
 $$|e^{i(\phi_{k}(y)-\phi_{k}(x))}-e^{i(\tilde{\phi}_{m}(y)-\tilde{\phi}_{m}(x))}|\leq |\big{(}\phi_{k}-\tilde{\phi}_{m})(y)-m_{2Q_{k}}(\phi_{k}-\tilde{\phi}_{m})+m_{2Q_{k}}(\phi_{k}-\tilde{\phi}_{m})-\big{(}\phi_{k}-\tilde{\phi}_{m})(x)|.$$
 Therefore
 $$\int_{2Q_{k}}\big{|}\avert{2Q_{k}}|e^{i(\phi_{k}(y)-\phi_{k}(x))}-e^{i(\tilde{\phi}_{m}(y)-\tilde{\phi}_{m}(x))}\,f(y)|dy\big{|}^{n}dx$$$$\leq |2Q_k|\big{[}\avert{2Q_{k}}|f(y)||\big{(}\phi_{k}-\tilde{\phi}_{m})(y)-m_{2Q_{k}}\big{(}\phi_{k}-\tilde{\phi}_{m})|dy\big{]}^{n}$$$$+ \{\avert{2Q_{k}}|f(y)|dy\}^{n}.\int_{2Q_{k}} |\big{(}\phi_{k}-\tilde{\phi}_{m})(x)-m_{2Q_{k}}(\phi_{k}-\tilde{\phi}_{m})|^{n}dx= |2Q_k|X^{n}+Y.$$
We apply the H\"older and Poincar\'e inequalities. Then, we use \eqref{jaugecz}, and the fact that $|B|$ is in $RH_{n/2}$ and $2Q_k$ is a of type 2.

 $$X\leq\big{(}\avert{2Q_{k}}|f(y)|^{\frac{n}{n-1}}\, dy\big{)}^{\frac{n-1}{n}}\, \big{(}\avert{2Q_{k}}|\big{(}\phi_{k}-\tilde{\phi}_{m})(y)-m_{2Q_{k}}\big{(}\phi_{k}-\tilde{\phi}_{m})|^{n}dy\big{)}^{\frac{1}{n}}$$
 $$\leq C R_{k} \big{(}\avert{2Q_{k}}|f(y)|^{\frac{n}{n-1}}\, dy\big{)}^{\frac{n-1}{n}}\, \big{(}\avert{2Q_{k}}|\nabla(\phi_{k}-\tilde{\phi}_{m})(y)|^{n}dy\big{)}^{\frac{1}{n}}.$$
Moreover, by construction
 $$\nabla(\phi_{k}-\tilde{\phi_{m}})=\tilde{\textbf{h}}_{m}-\textbf{h}_{k},$$
then
 \begin{align*}
X\leq & C R_{k} \big{(}\avert{2Q_{k}}|\big{(}\tilde{\textbf{h}}_{m}-\textbf{h}_{k})(y)|^{n}dy\big{)}^{\frac{1}{n}}\big{(}\avert{2Q_{k}}|f(y)|^{\frac{n}{n-1}}\, dy\big{)}^{\frac{n-1}{n}} 
 \\
&\leq C R_{k}  \big{(}\avert{2Q_{k}}|\big{(}\tilde{\textbf{h}}_{m}-\textbf{h}_{k})(y)|^{n}dy\big{)}^{\frac{1}{n}}\big{(}\avert{2Q_{k}}|f(y)|^{\frac{n}{n-1}}\, dy\big{)}^{\frac{n-1}{n}} 
\\
 &\leq  C R^{2}_{k}\avert{2Q_{k}}|B| [\big{(}\avert{2Q_{k}}||f(y)|-m_{2Q_{k}}(|f|)|^{\frac{n}{n-1}}\, dy\big{)}^{\frac{n-1}{n}}+ C m_{2Q_{k}}(|f|)] 
\\
 &\leq   C R^{2}_{k}\avert{2Q_{k}}|B| [\avert{2Q_{k}}|Lf(y)|\, dy+m_{2Q_{k}}(|f|)] \leq   C [\alpha |Q_{k}|^{1/n}+R^{2}_{k}\avert{2Q_{k}}|B|\avert{2Q_{k}}|f|]
\\
 &\leq  C R_{k} [\alpha +\big{(}\avert{2Q_{k}}|Lf(y)|+||B|^{1/2}f(y)|\, dy\big{)}]\leq C R_{k} \alpha.
\end{align*}
We use the same arguments to estimate $Y$: 
 \begin{align*}Y=& \{\avert{2Q_{k}}|f(y)|dy\}^{n}\,\int_{2Q_{k}} |\big{(}\phi_{k}-\tilde{\phi}_{m})(x)-m_{2Q_{k}}(\phi_{k}-\tilde{\phi}_{m})|^{n}dx
\\
&\leq C R^{n}_{k}  \{\avert{2Q_{k}}|f(y)|dy\}^{n}\,\int_{2Q_{k}} |\nabla(\phi_{k}-\tilde{\phi}_{m})|^{n}
\\
&\leq C R^{n}_{k}|Q_{k}| \avert{2Q_{k}} |\tilde{\textbf{h}}_{m}-\textbf{h}_{k}|^{n}\{\avert{2Q_{k}}|f(y)|dy\}^{n}
\\
&\leq |Q_{k}|R^{n}_{k}\{ R_{k}\avert{2Q_{k}} |B|\avert{2Q_{k}}|f(y)|dy\}^{n}
\\   
& \leq |Q_{k}|R^{n}_{k}\{\avert{2Q_{k}}|Lf(y)|+||B|^{1/2}f(y)|\, dy\}^{n} \leq |Q_{k}|R^{n}_{k}\alpha^{n}.
\end{align*}
We obtain
 \begin{align*}\int_{Q_{m}}|I|^{n}\leq & C \sum_{k\in J\cap K_{m} } \int_{2Q_{k}} |\big{(}e^{-i\phi_{k}(x)}m_{2Q_{k}}(e^{i\phi_{k}}f)-e^{-i\tilde{\phi}_{m}(x)}m_{2Q_{k}}(e^{i\tilde{\phi}_{m}}f)\big{)}\nabla \chi_{k}(x)|^{n}dx
\\
&\leq C \sum_{k\in J\cap K_{m} }R^{-n}_{k}|Q_{k}|\, R^{n}_{k}\alpha^{n}\leq C \alpha \sum_{k\in J\cap K_{m} }|Q_{k}|\leq C |Q_{m}| \alpha .
\end{align*}
By integration on $\Omega$ and using \eqref{eq:czd}, we get
\begin{equation}\label{eq:g1}\|G_{1}\|_{n} \leq C \alpha^{1-\frac{p}{n}}\big{(}\|Lf\|_{p}+ \| |B|^{1/2}f\|_{p}\big{)}^{p/n}.
\end{equation}
 $L g = (L f) \mathbf{1}_{F}+ G_{1}+G_{2},\, \,a.e$ .
 
 % par suite $L_{j}g$ est une
 %fonction bounded with$\| L_{j} g\|_{\infty}\leq C\alpha$; ce qui
 %donne with\eqref{eq:czc} et \eqref{eq:czd} que
Since $| L f| \leq C\alpha$ on $F$, then estimates \eqref{eq:g1} and \eqref{eq:g2} imply
  \begin{equation}\|Lg\|_{n} \leq C \alpha^{1-\frac{p}{n}}\big{(}\|Lf \|_{p}+\||B|^{1/2}f\|_{p}\big{)}^{p/n}.
 \end{equation}
 Then  $$\| L g\|_{n}+\| |B|^{1/2} g\|_{n} \leq C \alpha^{1-\frac{p}{n}}\big{(}\|Lf \|_{p}+\||B|^{1/2}f\|_{p}\big{)}^{p/n}.$$
 Thus \eqref{eq:czb} is proved.
\end{proof}

  \subsection{Estimates for weak solution}
  Throughout this section we will assume that \textbf{$u$ is a weak solution of $H(\textbf{a},0)u=0$ in $4Q$}, where $Q$ is a cube centred at $x_{0}\in \mathbb{R}^{n}$ with sidelength $R$. The constants are independant of $u$ and $Q$.

\begin{lemm}\label{lemm:sol}(Lemma 1.11\cite{Sh4})
Let $B$ satisfying \eqref{eq:shen}. Then, for all $k>0$, there exists a constant $C_{k}>0$ such that 
 \begin{equation}|u(x_{0})| \leq \frac{C_{k}}{\{1+
Rm(x_{0},|B|)\}^{k} }\big(\avert{Q(x_{0},R)} |u|^{2}\big)^{1/2}. 
\end{equation} 
 
  \end{lemm}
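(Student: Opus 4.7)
The plan is to combine a subsolution-type mean value inequality, valid up to the critical scale $r_0:=1/m(x_0,|B|)$, with a geometric growth estimate for the $L^2$ average $\avert{Q(x_0,r)}|u|^2$ as $r$ increases past $r_0$. Iterating the growth estimate from $r_0$ up to $R$ produces polynomial decay in $Rm(x_0,|B|)$, and a bootstrap or Agmon weight argument upgrades this to decay of arbitrary order $k$.

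For the mean value step, I would apply Kato's inequality to the equation $H(\mathbf{a},0)u=0$, which yields the distributional subharmonicity $\Delta|u|^{2}\ge 0$ (equivalently, $|u|^{2}$ is subharmonic), giving
$$|u(x_0)|^{2}\le C\,\avert{Q(x_0,r_0)}|u|^{2}.$$
Lemma \ref{th:mprop}(2) legitimizes using $m(x_0,|B|)$ uniformly on $Q(x_0,r_0)$, and this already settles the regime $Rm(x_0,|B|)\lesssim 1$, where the prefactor $(1+Rm)^{-k}$ is harmless.

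The heart of the proof is the growth estimate: for every $r\ge r_0$,
$$\avert{Q(x_0,r)}|u|^{2}\le \frac{1}{1+\delta}\,\avert{Q(x_0,2r)}|u|^{2},$$
with $\delta=\delta(rm(x_0,|B|))>0$. I would apply the improved Fefferman--Phong inequality (Lemma \ref{FP}) with $\omega=|B|$ on $Q(x_0,2r)$: since $r\ge r_0$, the RHS hypothesis gives $r^{2}\avert{Q(x_0,2r)}|B|\gtrsim (rm(x_0,|B|))^{2}\ge 1$, so
$$\int_{Q(x_0,2r)}|Lu|^{2}+|B||u|^{2}\gtrsim \frac{(rm(x_0,|B|))^{2\beta}}{r^{2}}\int_{Q(x_0,2r)}|u|^{2}.$$
The Caccioppoli inequality of Section 2 absorbs the $|Lu|^{2}$ integral into $r^{-2}\int_{Q(x_0,4r)}|u|^{2}$. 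For the $|B||u|^{2}$ integral I would exploit the commutator $[L_j,L_k]=\frac{1}{i}b_{jk}$, integrating $\langle b_{jk}u,u\rangle$ by parts against a cutoff and using $|\nabla B|\le c\,m(\cdot,|B|)^{3}$ from \eqref{eq:shen} to transfer the $|B|$-mass into $|Lu|^{2}$ terms plus remainders controlled by $\avert{Q(x_0,2r)}|u|^{2}$ with a small constant. Rearranging produces the recursive estimate above, and iterating $O(\log(Rm(x_0,|B|)))$ times yields decay at some fixed rate $\alpha_0>0$. To reach arbitrary $k$, I would either re-iterate the argument (each pass improving the rate by $\alpha_0$) or replace the sharp cutoffs by Agmon-type exponential weights $e^{\lambda d(x)}$, where $d$ is the Agmon pseudo-distance associated with $m(\cdot,|B|)$, producing decay faster than any polynomial.

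The main obstacle is controlling $\int|B||u|^{2}$ in the growth step. Unlike the case with a nonzero electric potential $V$, the equation $H(\mathbf{a},0)u=0$ does not directly give an energy bound on $\int|B||u|^{2}$, so one must extract it from the commutator structure of the $L_j$'s combined with the regularity assumption $|\nabla B|\le c\,m(\cdot,|B|)^{3}$; this is where the hypotheses \eqref{eq:shen} are genuinely used. The secondary obstacle, obtaining decay of arbitrary order rather than a single fixed polynomial rate, is best handled via the Agmon weight version of the argument, which requires carefully verifying that the weight $e^{\lambda d}$ is tame at the scale $1/m(\cdot,|B|)$ thanks to Lemma \ref{th:mprop}.
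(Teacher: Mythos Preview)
The paper does not prove this lemma; it is imported from Shen \cite{Sh4}. The closely related Lemma \ref{lemm:solV} (same conclusion for solutions of $H(\mathbf{a},V)u=0$) \emph{is} proved in the paper, and that argument shows the intended route: apply the global inequality $\|m(\cdot,|B|)f\|_2\le C\|Lf\|_2$ to a cutoff of $u$, combine with Caccioppoli to get $\int_Q m(\cdot,|B|)^2|u|^2\le CR^{-2}\int_{2Q}|u|^2$, insert the lower bound $m(y,|B|)\ge c\,m(x_0,|B|)(1+Rm(x_0,|B|))^{-k_0/(k_0+1)}$ from Lemma \ref{th:mprop}, and iterate. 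The global inequality itself (proved in \cite{Sh3}) is where the commutator identity and the bound $|\nabla B|\le c\,m^3$ actually enter, and they are used on cubes of the \emph{critical} size $1/m$.

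Your overall scheme --- subharmonicity at the critical scale plus an iterated growth estimate for the $L^2$ averages --- is the right one, and the ingredients you name are exactly Shen's. The gap is in where you deploy them. You propose to control $\int_{Q(x_0,2r)}|B||u|^2$ by running the commutator argument directly on the large cube $Q(x_0,2r)$; but integration by parts only bounds $\bigl|\int b_{jk}|u|^2\eta^2\bigr|$, not $\int|b_{jk}||u|^2\eta^2$, and to pass between the two you must freeze $b_{jk}$ at a point, at a cost of $r\sup_{Q(x_0,2r)}|\nabla B|\int|u|^2$. For $r\gg r_0=1/m(x_0,|B|)$ this error is of order $r\,m(x_0,|B|)^3(1+rm(x_0,|B|))^{3k_0}\int|u|^2$ by Lemma \ref{th:mprop}(3), which overwhelms the Fefferman--Phong gain (note also that $r^2\avert{Q(x_0,2r)}|B|$ grows only like $(rm(x_0,|B|))^\delta$ for some $0<\delta<2$ under $RH_{n/2}$, not like $(rm(x_0,|B|))^2$ as you wrote). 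So the ``small constant'' remainder is not small. The remedy is to run the commutator/freezing step only on subcubes of the critical size $1/m$, where the freezing error is genuinely $O(m^2)$ and absorbable; summing yields the localizable inequality $\int_Q m^2|u|^2\le C\int_{2Q}|Lu|^2+CR^{-2}\int_{2Q}|u|^2$ for every cube, and then Caccioppoli plus Lemma \ref{th:mprop} give a growth factor $(1+Rm(x_0,|B|))^{-2/(k_0+1)}$ whose straight iteration already produces decay of arbitrary order, with no need for Agmon weights.
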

	
This lemma leads to the following proposition:
  \begin{pro}\label{pro:mu} Under the hypothesis \eqref{eq:shen}, for all $q>2$, there exists a constant $C>0$ such that %pour tout cube $Q=Q (x_{0}, R)$ et toute solution faible de $H(\textbf{a},0)u=0$ dans $4Q=Q (x_{0}, 4R)$,
 \begin{equation}
 \big{(} \avert{Q}| m(.,|B|) u |^{q} \big{)}^{1/q}\leq C \big{(} \avert{3Q}| m(.,|B|) u |^{2} \big{)}^{1/2}.
 \end{equation}

 \end{pro}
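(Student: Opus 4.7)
The plan is to reduce the proposition to the pointwise $L^{\infty}$ bound
\[
m(x,|B|)\,|u(x)|\le C\,\Bigl(\avert{3Q}|m(\cdot,|B|)\,u|^{2}\Bigr)^{1/2},\qquad x\in Q,
\]
since raising this to the $q$-th power and averaging over $Q$ immediately yields the proposition (in fact for every $q<\infty$).

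To establish the pointwise bound, I would fix $x\in Q$ and apply Lemma~\ref{lemm:sol} at the point $x$ with radius $R/4$; this is admissible because $4Q(x,R/4)\subset 4Q$ and $u$ is a weak solution on the latter. With an exponent $k$ to be chosen later, one gets
\[
|u(x)|\le \frac{C_{k}}{(1+(R/4)\,m(x,|B|))^{k}}\Bigl(\avert{Q(x,R/4)}|u|^{2}\Bigr)^{1/2}\le \frac{C'_{k}}{(1+R\,m(x,|B|))^{k}}\Bigl(\avert{Q(x,R/4)}|u|^{2}\Bigr)^{1/2}.
\]
To convert the $L^{2}$ norm of $u$ on the right-hand side into an $L^{2}$ norm of $m(\cdot,|B|)u$, I would invoke Lemma~\ref{th:mprop}(4): for $y\in 3Q$ one has $m(y,|B|)\ge c\,m(x,|B|)\,(1+R\,m(x,|B|))^{-k_{0}/(k_{0}+1)}$, so, after enlarging $Q(x,R/4)$ to $3Q$,
\[
m(x,|B|)^{2}\,\avert{Q(x,R/4)}|u|^{2}\le C\,(1+R\,m(x,|B|))^{2k_{0}/(k_{0}+1)}\,\avert{3Q}|m(\cdot,|B|)\,u|^{2}.
\]
Combining the two displays gives
\[
m(x,|B|)\,|u(x)|\le \frac{C_{k}}{(1+R\,m(x,|B|))^{k-k_{0}/(k_{0}+1)}}\Bigl(\avert{3Q}|m(\cdot,|B|)\,u|^{2}\Bigr)^{1/2},
\]
and choosing any $k>k_{0}/(k_{0}+1)$ makes the prefactor uniformly bounded in $x$, which yields the desired $L^{\infty}$ bound.

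The main obstacle I anticipate is the bookkeeping of the polynomial factors $(1+R\,m(x,|B|))^{\pm}$, which enter both through the decay in Lemma~\ref{lemm:sol} and through the comparison of $m(x,|B|)$ with $m(y,|B|)$ on $3Q$ via Lemma~\ref{th:mprop}(4). The key observation is that the exponent $k$ in Lemma~\ref{lemm:sol} is arbitrary, so the polynomial loss of order $k_{0}/(k_{0}+1)<1$ coming from the comparison of the $m$'s can always be dominated by a sufficiently large decay exponent $k$, at the cost of a constant depending on $k$, the dimension, and the $RH_{n/2}$ constant of $|B|$.
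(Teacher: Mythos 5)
Your proof is correct and takes essentially the same route as the paper: both rest on Lemma~\ref{lemm:sol} (the mean-value estimate with arbitrary polynomial decay in $k$) and the comparison bounds for $m(\cdot,|B|)$ in Lemma~\ref{th:mprop}, with the decay exponent $k$ chosen large enough to dominate the fixed polynomial loss $k_0/(k_0+1)$ from the $m$-comparison. Your pointwise reformulation
$\;m(x,|B|)\,|u(x)|\le C\bigl(\avert{3Q}|m(\cdot,|B|)u|^{2}\bigr)^{1/2}$ for $x\in Q$
is in fact slightly cleaner than the paper's argument (which passes through $m(x_0,|B|)$ using part (3) of Lemma~\ref{th:mprop} and asserts without detail a bound on $\bigl(\avert{Q}|u|^q\bigr)^{1/q}$): it makes explicit the localization of Lemma~\ref{lemm:sol} to each $x\in Q$ with radius $R/4$, needs only part (4) of Lemma~\ref{th:mprop}, and yields the proposition for all $q\le\infty$ at once.
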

 \begin{proof}
 Fix $q>2$
 $$\big{(} \avert{Q}| m(x,|B|) u(x) |^{q}dx \big{)}^{1/q}\leq  \{1+ R m(x_{0},|B|)\}^{k_{0} }m(x_{0},|B|)\big{(} \avert{Q}|  u |^{q} \big{)}^{1/q}$$$$\leq \frac{C_{k} \{1+ R m(x_{0},|B|)\}^{k_{0}}m(x_{0},|B|)}{\{1+ R m(x_{0},|B|)\}^{k}}\big{(} \avert{3Q}|u |^{2} \big{)}^{1/2}$$$$\leq \{1+ R m(x_{0},|B|)\}^{k_{0}-k+(k_{0}/k_{0}+1)}\frac{C_{k} m(x_{0},|B|)}{\{1+ R m(x_{0},|B|)\}^{k_{0}/k_{0}+1}}\big{(} \avert{3Q}|  u |^{2} \big{)}^{1/2}$$
 $$\leq  C\big{(} \avert{3Q}|m(.,|B|)  u |^{2} \big{)}^{1/2}.$$
Here we used Lemma \ref{th:mprop} and the fact that $ u $ satisfies Lemma \ref{11.1} with arbitrary $ k $.
\end{proof}
 
 \begin{lemm}\label{lemm 2.3}(Lemma 2.7 \cite{Sh4})
 Suppose $B$ satisfies \eqref{eq:shen}.
 %Soit $u$ une solution faible de $H(\textbf{a},0)u=0$ dans $Q(x_{0},4R)$. 
 For any integer $k>0$, there exists $C_k>0$, such that
 \begin{equation}\label{eq:1}| Lu(x_0)|\leq \frac{C_{k}}{\{1+Rm(x_0,|B|)\}^k}\, \frac{1}{R}\, \big{(} \frac{1}{|Q(x_{0},2R)|}\int_{Q(x_{0},2R)}|u|^{2}  \big{)}^{1/2}
 \end{equation}
 \end{lemm}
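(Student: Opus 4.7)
The plan is to combine elliptic regularity at the natural ``magnetic scale'' $r_{0}=1/m(x_{0},|B|)$ with the pointwise decay estimate for $u$ given by Lemma \ref{lemm:sol}. The key two-step scheme is: first show $|Lu(x_{0})|\lesssim r_{0}^{-1}\big(\avert{Q(x_{0},2r_{0})}|u|^{2}\big)^{1/2}$, then transfer the $L^{2}$-average on $Q(x_{0},2r_{0})$ to $Q(x_{0},2R)$ via Lemma \ref{lemm:sol}.

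\textbf{Step 1 (elliptic regularity at the magnetic scale).} Set $r_{0}=1/m(x_{0},|B|)$. The very definition of $m$ together with doubling of $|B|\,dx$ (which holds since $|B|\in A_{\infty}$) gives $r_{0}^{2}\avert{Q(x_{0},2r_{0})}|B|\lesssim 1$. Apply Corollary \ref{jaugebes1} on $Q=Q(x_{0},2r_{0})$: there exist $\mathbf{h}\in L^{n}(Q,\mathbb{R}^{n})$ and $\phi\in H^{1}(Q)$ with $\mathbf{a}=\mathbf{h}+\nabla\phi$ and
$$\Bigl(\avert{Q}|\mathbf{h}|^{n}\Bigr)^{1/n}\lesssim r_{0}\Bigl(\avert{Q}|B|^{n/2}\Bigr)^{2/n}\lesssim r_{0}\avert{Q}|B|\lesssim r_{0}^{-1},$$
using $|B|\in RH_{n/2}$. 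Setting $v=e^{i\phi}u$, $v$ is a weak solution of $H(\mathbf{h},0)v=0$ on $Q$. A Caccioppoli inequality gives $\int_{Q(x_{0},r_{0})}|\tilde{L}v|^{2}\lesssim r_{0}^{-2}\int_{Q(x_{0},2r_{0})}|v|^{2}$, where $\tilde{L}=\frac{1}{i}\nabla-\mathbf{h}$. A Moser-type iteration on the equation satisfied by each component $\tilde{L}_{j}v$, using smallness of $\mathbf{h}$ in $L^{n}$ and the bound $|\nabla B|\le c\,m(\cdot,|B|)^{3}$ from \eqref{eq:shen} to control the source terms arising from the commutator identity $[H,L_{j}]=-i\sum_{k}(b_{jk}L_{k}+L_{k}b_{jk})$, upgrades this $L^{2}$-bound to the pointwise estimate
$$|\tilde{L}v(x_{0})|\lesssim r_{0}^{-1}\Bigl(\avert{Q(x_{0},2r_{0})}|v|^{2}\Bigr)^{1/2}.$$
By gauge invariance $|Lu|=|\tilde{L}v|$ and $|u|=|v|$, so the same bound holds for $|Lu(x_{0})|$ in terms of averages of $u$.

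\textbf{Step 2 (transfer of averages and conclusion).} We may assume $R\ge 2r_{0}$, since otherwise $\{1+Rm(x_{0},|B|)\}^{k}\lesssim 1$ and the claim is contained in Step 1 applied directly with $r_{0}$ replaced by $R$. For $y\in Q(x_{0},2r_{0})$, Lemma \ref{th:mprop}(2) yields $m(y,|B|)\simeq m(x_{0},|B|)$. Applying Lemma \ref{lemm:sol} at $y$ to the cube $Q(y,R)\subset Q(x_{0},2R)$, and then doubling, gives
$$|u(y)|^{2}\le\frac{C_{k}}{\{1+Rm(y,|B|)\}^{2k}}\avert{Q(y,R)}|u|^{2}\le\frac{C_{k}}{\{1+Rm(x_{0},|B|)\}^{2k}}\avert{Q(x_{0},2R)}|u|^{2}.$$
Averaging in $y$ over $Q(x_{0},2r_{0})$ and plugging into Step 1 yields
$$|Lu(x_{0})|\lesssim\frac{m(x_{0},|B|)}{\{1+Rm(x_{0},|B|)\}^{k}}\Bigl(\avert{Q(x_{0},2R)}|u|^{2}\Bigr)^{1/2}.$$
The elementary inequality $m(x_{0},|B|)\le R^{-1}\{1+Rm(x_{0},|B|)\}$ and replacing $k$ by $k+1$ produce exactly the claimed estimate with the prefactor $1/R$ and decay exponent $k$.

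\textbf{Main obstacle.} The delicate point is Step 1: the scalar Moser/de Giorgi argument combined with the diamagnetic inequality \eqref{2diam} readily yields $L^{\infty}$ control of $|u|$, but controlling $|Lu|$ pointwise from $L^{2}$-averages of $u$ requires analyzing the equation obeyed by the components $L_{j}u$, whose right-hand sides involve $b_{jk}L_{k}u$ and $u\,\nabla B$. Here the second hypothesis in \eqref{eq:shen}, $|\nabla B|\lesssim m(\cdot,|B|)^{3}$, is essential: at scale $r_{0}\sim 1/m(x_{0},|B|)$ it renders the source term of the same order as the principal elliptic part, so that a Calderón--Zygmund / Morrey $C^{1,\alpha}$ bootstrap on the gauge-transformed perturbed Laplacian closes the estimate. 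This step follows Shen \cite{Sh4} and is where the structure conditions on $B$ are genuinely used.
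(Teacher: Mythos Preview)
The paper does not prove this lemma itself; it is quoted from Shen \cite{Sh4}, and the paper only records in a remark that the proof rests on the bootstrap inequality \eqref{eq:2}, which controls higher $L^{q}$ norms of $Lu$ on a cube by its $L^{2}$ norm plus lower-order terms involving $|\nabla B||u|$ and $|B||Lu|$. Your two-step scheme --- pointwise control of $Lu$ at the magnetic scale $r_{0}=1/m(x_{0},|B|)$ via a commutator/Moser iteration, followed by transfer to scale $R$ using Lemma \ref{lemm:sol} --- is precisely Shen's strategy, and your description of Step~1 (analyzing the equation satisfied by $L_{j}u$, with source terms $b_{jk}L_{k}u$ and $u\,\nabla B$ coming from $[H,L_{j}]$) is exactly what produces inequality \eqref{eq:2}. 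So your outline agrees with the paper's indicated proof.
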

 
 \begin{rem}
	
The proof of this lemma is based on the following inequality interesting in its own right:
 
 If $2\leq p <q \leq \infty $ and $1/q- 1/p>-2/n$, then 
 \begin{equation}\label{eq:2}
 \big{(}  \avert{\frac{1}{32}\,Q}| L u |^{q} \big{)}^{1/q}\leq C \big{(}  \avert{\frac{1}{4}\,Q}| L u |^{2} \big{)}^{1/2}+C R^{2}\big{(}  \avert{\frac{1}{4}\,Q}( |\nabla B || u |)^{p} \big{)}^{1/p}
  \end{equation}
  $$+C R^{2}\big{(}  \avert{\frac{1}{4}\,Q}( | B || Lu |)^{p} \big{)}^{1/p}.$$
  \end{rem}
  \begin{rem}(\cite{Sh4}) Let $\Gamma_{0}(x,y)$ be the kernel of $H(\textbf{a},0)^{-1}$. Under assumptions \eqref{eq:shen}, for all $k>0$, there exists a constant $C_{k}>0$ such that
 \begin{equation}\label{FS} |L^{x}_{j} \Gamma_{0}(x,y)|\leq \frac{C_k}{\{1+|x-y|m(x,|B|)\}^{k}}\, \frac{1}{|x-y|^{n-1}},
 \end{equation}
 for all $x,\, y\in \mathbb{R}^{n},\, x\neq y$, where $L^{x}_{j}=\frac{1}{i}\frac{\partial}{\partial  x_{j}}-a_{j}(x)$.
 \end{rem}
  	
Using inequalities \eqref{eq:1} and \eqref{eq:2}, we obtain the following technical lemma, necessary for the proof of Theorem \ref{th:1a}:
 \begin{lemm}\label{th:RH1} Under assumptions \eqref{eq:shen}, for any $q>2$, there exists  a constante $C=C_{q}>0$ such that %pour tout cube $Q=Q (x_{0}, R)$ et toute solution faible de $H(\textbf{a},0)u=0$ dans $4Q=Q (x_{0}, 4R)$, $| L u | \in L^{q}(B)$ et
 \begin{equation}
  \big{(} \avert{Q}| L u |^{q}\big{)}^{1/q}\leq C \big{(} \avert{3Q}| L u |^{2}+| m(.,|B|) u |^{2} \big{)}^{1/2},
  \end{equation}
  and
  \begin{equation}
  | L u(x_{0}) |\leq C \big{(} \avert{3Q}| L u |^{2}+| m(.,|B|) u |^{2}\big{)}^{1/2}.
  \end{equation}
  \end{lemm}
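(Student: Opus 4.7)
The plan is to split by the size of the dimensionless quantity $\rho := R\,m(x_0,|B|)$, with threshold $\rho \sim 1$. When $\rho$ is large, the decay provided by Lemma \ref{lemm 2.3} is strong enough on its own to yield both the pointwise bound at $x_0$ and the $L^q$ bound on $Q$. When $\rho$ is small, the magnetic contribution is subcritical at scale $R$, and one can iterate the reverse-H\"older–type inequality \eqref{eq:2}, using the structural assumption \eqref{eq:shen} to handle the two inhomogeneous terms.

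In the regime $\rho \geq 1$, apply Lemma \ref{lemm 2.3} at an arbitrary point $y \in Q$ at a scale $\sim R$ chosen so that the averaging cube is contained in $4Q$:
$$|Lu(y)| \leq \frac{C_k}{R\{1+Rm(y,|B|)\}^k}\Bigl(\avert{Q(y,2R)}|u|^2\Bigr)^{1/2}.$$
Rewrite $1/R = m(x_0,|B|)/\rho$, and use Lemma \ref{th:mprop}(3)–(4) to compare $m(y,|B|)$, $m(x_0,|B|)$, and $m(z,|B|)$ for $z$ in the averaging cube: the comparison factors are powers of $\{1+\rho\}^{k_0}$. Bringing $m(x_0,|B|)$ inside the integral as $m(\cdot,|B|)$ and choosing $k$ large enough that $\{1+\rho\}^{k_0-k-1}\leq 1$ yields $|Lu(y)| \leq C\bigl(\avert{3Q}|m(\cdot,|B|)u|^2\bigr)^{1/2}$ uniformly in $y\in Q$. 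Evaluating at $y=x_0$ gives the pointwise bound, and the uniformity in $y$ gives the $L^q$ bound for every $q>2$ (indeed for $q=\infty$).

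In the regime $\rho < 1$, Lemma \ref{th:mprop}(2) gives $m(\cdot,|B|) \approx m(x_0,|B|)$ on $3Q$, hence $R^2\,m(\cdot,|B|)^2 \leq C$, while the definition of $m$ together with the doubling of $|B|\,dx$ yields $R^2\avert{3Q}|B|\leq C$. Apply \eqref{eq:2} to a cube $Q'\subset 4Q$ of sidelength comparable to $R$, starting with input exponent $p=2$. The first term $\bigl(\avert{Q'}|Lu|^2\bigr)^{1/2}$ is already in the desired form. The second, $R^2\bigl(\avert{Q'}(|\nabla B|\,|u|)^p\bigr)^{1/p}$, is dealt with via \eqref{eq:shen}: the bound $|\nabla B|\leq c\,m(\cdot,|B|)^3$ together with $R^2 m^2\leq C$ reduces it to $C\bigl(\avert{Q'}|m(\cdot,|B|)u|^p\bigr)^{1/p}$, which is then dominated by $\bigl(\avert{3Q}|m(\cdot,|B|)u|^2\bigr)^{1/2}$ via Proposition \ref{pro:mu}. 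The third, $R^2\bigl(\avert{Q'}(|B|\,|Lu|)^p\bigr)^{1/p}$, is handled by H\"older with the reverse-H\"older exponent of $|B|$: absorbing $R^2\bigl(\avert{Q'}|B|^{n/2}\bigr)^{2/n}\leq C$ leaves $C\bigl(\avert{Q'}|Lu|^{ps}\bigr)^{1/(ps)}$ for some $s>1$. Feed this back into \eqref{eq:2} as the new input exponent and iterate finitely many times, respecting the Sobolev constraint $1/q-1/p>-2/n$ at each step, to reach every $q>2$; pushing the iteration past $p\geq n/2$ drives $q$ to $\infty$ and yields the pointwise bound at $x_0$ in this regime as well.

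The main technical obstacle is the bootstrap in the small-$\rho$ case: one must verify that the chain of exponents $2=p_0<p_1<\cdots<p_N=q$ produced by successive applications of \eqref{eq:2} can be chosen to satisfy $1/p_i-1/p_{i+1}<2/n$ at each step, that the reverse-H\"older constants accumulated when absorbing $|B|$ do not blow up across iterations, and that the nested averaging cubes all remain inside $4Q$ with constants independent of $\rho$. A secondary subtlety is the bookkeeping in Lemma \ref{th:mprop} in the large-$\rho$ case, which determines the minimal $k$ needed in Lemma \ref{lemm 2.3} to absorb all factors of $\{1+\rho\}$.
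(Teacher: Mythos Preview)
Your case split by $\rho = R\,m(x_0,|B|)$ matches the paper's split by $R^2\avert{Q}|B|\lessgtr 1$, and your treatment of the large-$\rho$ regime is a valid alternative to the paper's: you exploit the decay factor $\{1+Rm\}^{-k}$ in Lemma~\ref{lemm 2.3} together with the comparison properties of $m$, whereas the paper simply drops the decay, obtains $|Lu(x_0)|\le CR^{-1}(\avert{2Q}|u|^2)^{1/2}$, and then invokes the Fefferman--Phong inequality \eqref{eq:feff} (with $\min(\avert{2Q}|B|,R^{-2})=R^{-2}$ in this case) plus $|B|\le Cm(\cdot,|B|)^2$ to convert $R^{-2}\avert{}|u|^2$ into $\avert{}|Lu|^2+|m u|^2$. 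Both routes are fine here.

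The genuine gap is in your bootstrap for the third term in the small-$\rho$ regime. Applying H\"older to $R^2\big(\avert{Q'}(|B|\,|Lu|)^p\big)^{1/p}$ with the $RH_{n/2}$ exponent forces the $|Lu|$ factor into $L^{ps}$ with $1/(ps)=1/p-2/n$, and this $ps$ coincides with the \emph{upper} endpoint allowed for $q$ in \eqref{eq:2}. Thus each step produces an inequality of the shape $A_q \le C[A_2 + M_p + A_{ps}]$ with $ps\ge q$; since $A_r$ is nondecreasing in $r$ on averages, this is no stronger than the trivial $A_q\le A_{ps}$ and the chain never closes. Feeding $ps$ back as the new input only pushes the uncontrolled exponent further up. There is no smallness to absorb either: $R^2(\avert{Q'}|B|^{n/2})^{2/n}$ is bounded but not small in this regime.

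The fix, and what the paper does, is to use the \emph{pointwise} bound $|B|(x)\le C\,m(x,|B|)^2$, which is a consequence of \eqref{eq:shen}: since $|\nabla B|\le c\,m^3$ and $m\approx m(x_0)$ on $Q(x_0,1/m(x_0))$, one has $|B(x)-B(y)|\le Cm(x_0)^2$ there, while $\avert{Q(x_0,1/m(x_0))}|B|\sim m(x_0)^2$ by definition of $m$. With this in hand the third term becomes $R^2 m(x_0)^2\big(\avert{Q'}|Lu|^p\big)^{1/p}\le \big(\avert{Q'}|Lu|^p\big)^{1/p}$ at the \emph{same} exponent $p$. The iteration then runs \emph{downward}, $q>p_1>p_2>\cdots>2$ with $1/p_{i+1}-1/p_i<2/n$ at each step, and closes after finitely many applications of \eqref{eq:2}; the $|m u|$ terms at intermediate exponents are reduced to $L^2$ via Proposition~\ref{pro:mu}. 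Replacing your H\"older step by this pointwise estimate repairs the argument.
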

 \begin{proof}According to the type of the cube $Q$, we would use \eqref{eq:1} or  \eqref{eq:2} to prove our lemma.

  \textbf{First case: $R^{2}\avert{Q}|B|\leq 1$}.
  
  By the definition of $m(.,|B|)$, it follows that $R \leq \frac{1}{m(x_{0},|B|)}$.
   Using \eqref{eq:shen} and
 \eqref{eq:2} we have for all $2\leq p <q \leq \infty $ and $1/q- 1/p>-2/n$
 $$\big{(}  \avert{\frac{1}{32}Q}| L u |^{q} \big{)}^{1/q}\leq C \big{(}  \avert{\frac{1}{4}Q}| L u |^{2} \big{)}^{1/2}+C R^{2}\big{(}  \avert{\frac{1}{4}Q}(   |m(x,|B|)^{3}\, u(x) |)^{p}\,dx \big{)}^{1/p}$$
  $$+C R^{2}\big{(}  \avert{\frac{1}{4}Q}( |m(x,|B|)^{2}\, Lu(x) |)^{p}\,dx \big{)}^{1/p}.$$
  Since $R < \frac{1}{m(x_{0},|B|)}$, then by the Lemma \ref{th:mprop}, $$\forall x\in Q, \, m(x,|B|)\approx m(x_{0},|B|).$$
   Hence:
   $$ \big{(}  \avert{\frac{1}{32}\,Q}| L u |^{q} \big{)}^{1/q}\leq C \big{(}  \avert{\frac{1}{4}Q}| L u |^{2} \big{)}^{1/2}+C R^{2}\,m(x_{0},|B|)^{2}\big{(}  \avert{\frac{1}{4}\,Q}(   |m(x,|B|) u(x) |)^{p}\,dx \big{)}^{1/p}$$
  $$+C R^{2}m(x_{0},|B|)^{2}\big{(} \avert{\frac{1}{4}Q} | Lu |^{p} \big{)}^{1/p}.$$
  We control $R$ by $ \frac{1}{m(x_{0},|B|)}$ and we obtain 
  $$ \big{(}  \avert{\frac{1}{32}\,Q}| L u |^{q} \big{)}^{1/q}\leq C\{  \big{(}  \avert{\frac{1}{4}\,Q}| L u |^{2} \big{)}^{1/2}+\big{(} \avert{\frac{1}{4}\,Q}(   |m(.,|B|) u |)^{p} \big{)}^{1/p}+\big{(} \avert{\frac{1}{4}\,Q} | Lu |^{p} \big{)}^{1/p}\}.$$
By iterating the inequality \ref{pro:mu}, it follows that for any $2<q\leq +\infty$,
  $$ \big{(}  \avert{\frac{1}{32}\,Q}| L u |^{q} \big{)}^{1/q}\leq C\{  \big{(}  \avert{\frac{1}{2}\,Q}| L u |^{2} \big{)}^{1/2}+\big{(} \avert{\frac{1}{2}\,Q}(   |m(.,|B|)\, u |)^{2} \big{)}^{1/2}\}.$$

 \textbf{Second case: $R^{2}\avert{Q}|B|> 1$.}
 We use Lemma \ref{lemm 2.3} to get the following inequality:
 $$| Lu(x_0)|\leq  \frac{C}{R}\, \big{(} \avert{2Q}|u|^{2}  \big{)}^{1/2}.$$
 Now we apply Fefferman-Phong inequality \eqref{eq:feff}.
 As, $$\min(\avert{2Q}|B|, \frac{1}{R^{2}})\sim \min(\avert{Q}|B|, \frac{1}{R^{2}})= \frac{1}{R^{2}}.$$
 The inequality takes the following form
  $$| Lu(x_0)|\leq C  \big{(} \avert{Q(x_{0},2R)}|Lu|^{2}+ |B||u|^{2} \big{)}^{1/2}\leq C\,\big{(} \avert{2Q}|Lu|^{2}+ |m(.,|B|)u|^{2} \big{)}^{1/2}. $$
 The last step uses \eqref{eq:shen}.
  \end{proof}
 \subsubsection{Some important tools}

Reverse H\"older inequalities previously established will be used to prove the Theorem \ref{th:1a}. 
The primary tool is the following  criterion for $L^p$ boundedness (\cite{AM1}). A slightly weaker version appears in Shen \cite{Sh2}.
 \begin{theo} \label{2theor:shen}
 
 Let $1\le p_{0} <q_0\le \infty$. Suppose that $T$ is a bounded
sublinear operator  on $L^{p_{0}}(\mathbb{R}^n)$. Assume that there exist
constants $\alpha_{2}>\alpha_{1}>1$, $C>0$ such that
\begin{equation}\label{2T:shen}
\big(\avert {Q} |Tf|^{q_0}\big)^{\frac1{q_0}}
\le
C\, \bigg\{ \big(\avert {\alpha_{1}\, Q}
|Tf|^{p_0}\big)^{\frac1{p_0}} +
(S|f|)(x)\bigg\},
\end{equation}
for all cube $Q$, $x\in Q$ and  all $f\in L^{\infty}_{comp}(\mathbb{R}^n)$ with 
support in $\mathbb{R}^n\setminus \alpha_{2}\, Q$, where $S$ is a positive operator.
Let $p_{0}<p<q_{0}$. If $S$ is bounded on $L^p(\mathbb{R}^n)$, then, there is a constant $C$ such that
$$
\|T f\|_{p}
\le
C\, \|f\|_{p}
$$
for all    $f\in L_{comp}^{\infty}(\mathbb{R}^n)$.\end{theo}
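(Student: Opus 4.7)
The strategy is the good-$\lambda$ method applied to the $p_0$-maximal function $M_{p_0}(Tf)(x) := (M(|Tf|^{p_0})(x))^{1/p_0}$, where $M$ denotes the Hardy--Littlewood maximal operator. Since $|Tf(x)| \le M_{p_0}(Tf)(x)$ almost everywhere, and conversely $\|M_{p_0}(Tf)\|_p \lesssim \|Tf\|_p$ whenever $p > p_0$ (by $L^{p/p_0}$ boundedness of $M$), controlling $M_{p_0}(Tf)$ in $L^p$ is equivalent to controlling $Tf$. The first step is a localized tail estimate: fix a cube $Q$ and $x \in Q$, split $f = f_1 + f_2$ with $f_1 := f\,\mathbf{1}_{\alpha_2 Q}$ (so $f_2$ is supported off $\alpha_2 Q$), and apply hypothesis \eqref{2T:shen} to $f_2$. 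Sublinearity $|Tf_2| \le |Tf| + |Tf_1|$ combined with the $L^{p_0}$ boundedness of $T$ on $f_1$ yields, for every $x \in Q$,
\begin{equation*}
\bigl(\avert{Q} |Tf_2|^{q_0}\bigr)^{1/q_0} \le C\bigl[M_{p_0}(Tf)(x) + M_{p_0} f(x) + S|f|(x)\bigr].
\end{equation*}

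For the good-$\lambda$ step proper, fix $\lambda > 0$ large enough that $\Omega_\lambda := \{M(|Tf|^{p_0}) > \lambda^{p_0}\}$ is a proper subset of $\mathbb{R}^n$ (possible since $Tf \in L^{p_0}$), and perform a Whitney decomposition $\Omega_\lambda = \bigcup_j Q_j$ arranged so that a definite enlargement of each $Q_j$ meets $\Omega_\lambda^c$. For any $x \in Q_j$ satisfying the good-set condition $M_{p_0} f(x) + S|f|(x) \le \gamma \lambda$, the Whitney property forces $M_{p_0}(Tf)(x) \lesssim \lambda$, so Step 1 applied with $Q$ a suitable enlargement of $Q_j$ (calibrated to $\alpha_1$, $\alpha_2$) yields $(\avert{Q} |Tf_2|^{q_0})^{1/q_0} \lesssim \lambda$. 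Chebyshev at exponent $q_0$ then gives $|\{x \in Q_j : |Tf_2(x)| > A\lambda\}| \lesssim A^{-q_0}|Q_j|$, while Chebyshev at exponent $p_0$ combined with $\|Tf_1\|_{p_0}^{p_0} \lesssim |Q_j|(\gamma\lambda)^{p_0}$ (using $L^{p_0}$ boundedness of $T$ and the pointwise bound on $M_{p_0} f$) gives $|\{x \in Q_j : |Tf_1(x)| > A\lambda\}| \lesssim (\gamma/A)^{p_0}|Q_j|$. Summing over the essentially disjoint Whitney cubes produces the good-$\lambda$ inequality
\begin{equation*}
\bigl|\{|Tf| > 2A\lambda\} \cap \{M_{p_0} f + S|f| \le \gamma\lambda\}\bigr| \le \bigl(C_1 A^{-q_0} + C_2 \gamma^{p_0} A^{-p_0}\bigr)\,|\Omega_\lambda|,
\end{equation*}
in which the assumption $q_0 > p_0$ enters crucially: one first chooses $A$ large (to beat the $A^{-p_0}$ term against $A^{-q_0}$) and then $\gamma$ small, making the coefficient arbitrarily small.

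Multiplying by $p\lambda^{p-1}\,d\lambda$ and integrating via the layer-cake formula, the standard good-$\lambda$ accounting converts the above into
\begin{equation*}
\|Tf\|_p^p \le \tfrac{1}{2}\|M_{p_0}(Tf)\|_p^p + C\bigl(\|M_{p_0} f\|_p^p + \|S|f|\|_p^p\bigr).
\end{equation*}
Using $\|M_{p_0}(Tf)\|_p \lesssim \|Tf\|_p$ to absorb the first right-hand term, combined with $\|M_{p_0} f\|_p \lesssim \|f\|_p$ (valid for $p > p_0$) and the hypothesized $L^p$-boundedness of $S$, the conclusion $\|Tf\|_p \le C\|f\|_p$ follows. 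Absorption is legitimate only once $\|Tf\|_p < \infty$ is known a priori; this we secure by the standard device of replacing $Tf$ with its truncation $(|Tf| \wedge N)\operatorname{sgn}(Tf)$, running the whole argument with a bound independent of $N$, and letting $N \to \infty$ by monotone convergence. The main obstacle lies in the bookkeeping of Step 2: one must select the enlargement of each Whitney cube carefully (matched to the constants $\alpha_1, \alpha_2$ so that hypothesis \eqref{2T:shen} applies and simultaneously some $x \in Q_j$ lies in the neighboring dyadic cube controlling $M_{p_0}(Tf)(x) \lesssim \lambda$), and track the doubling and overlap constants so that, after summation over $j$, the $A^{-q_0}$ contribution genuinely dominates and the final absorption factor stays strictly below $1$.
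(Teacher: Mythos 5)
The paper does not prove this theorem: it is quoted from Auscher--Martell \cite{AM1} (with a weaker version attributed to Shen \cite{Sh2}), so there is no ``paper's own proof'' to compare against; your task is in effect to reproduce the cited proof, which is indeed the good-$\lambda$ argument against $M_{p_0}(Tf)$ that you outline.

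The overall structure you give (split $f=f_1+f_2$ on a dilate of a Whitney cube, apply the hypothesis to $f_2$, Chebyshev at exponent $q_0$ for $Tf_2$ and at $p_0$ for $Tf_1$, sum over Whitney cubes, integrate by layer-cake, absorb using $\|M_{p_0}(Tf)\|_p\lesssim\|Tf\|_p$ after a truncation to secure a priori finiteness, and choose first $A$ large then $\gamma$ small using $p<q_0$) is correct and is the standard route. However, there is a concrete false step in your Step~2. You assert that for a good point $x\in Q_j$ (one where $M_{p_0}f(x)+S|f|(x)\le\gamma\lambda$), ``the Whitney property forces $M_{p_0}(Tf)(x)\lesssim\lambda$.'' This cannot be true: such an $x$ lies in $Q_j\subset\Omega_\lambda=\{M_{p_0}(Tf)>\lambda\}$, so $M_{p_0}(Tf)(x)>\lambda$, and there is no upper bound on it. The error originates in the way you packaged Step~1: you majorized the \emph{fixed} average $\big(\avert{\alpha_1 Q}|Tf|^{p_0}\big)^{1/p_0}$ coming from hypothesis \eqref{2T:shen} by the pointwise quantity $M_{p_0}(Tf)(x)$ at the \emph{same} $x$ used for $M_{p_0}f$ and $S|f|$, thereby forfeiting the freedom to evaluate the two ingredients at different points. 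What the Whitney property actually gives is a point $y_j$ in a fixed dilate $cQ_j$ with $y_j\notin\Omega_\lambda$, hence $M_{p_0}(Tf)(y_j)\le\lambda$; choosing the enlargement $Q'$ of $Q_j$ so that $y_j\in\alpha_1 Q'$, one then gets $\big(\avert{\alpha_1 Q'}|Tf|^{p_0}\big)^{1/p_0}\le M_{p_0}(Tf)(y_j)\le\lambda$, while the good point $x_j\in Q_j\subset\alpha_2 Q'$ controls $M_{p_0}f(x_j)+S|f|(x_j)\le\gamma\lambda$ and $\|f_1\|_{p_0}^{p_0}\lesssim|Q'|(\gamma\lambda)^{p_0}$. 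So you must keep the average itself (or evaluate the maximal function at $y_j$, not at $x_j$) in Step~1. With this repair the rest of your bookkeeping is sound and the proof closes; also note that your parenthetical ``to beat the $A^{-p_0}$ term against $A^{-q_0}$'' is garbled --- the point is that $A^{p-q_0}\to 0$ because $p<q_0$, after which $\gamma$ kills the remaining $\gamma^{p_0}A^{p-p_0}$ term.
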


 An important step to prove the $L^p$ boundedness of Riesz transforms via the application of the previous theorem, is the control of the term $ m (. | B |)u $ on the reverse H\"older type estimates established earlier. The following result enables such a control:
 \begin{theo}\label{th:mH} Under assumptions \eqref{eq:shen}, for all $1<p<\infty$, there exists a constant $C>0$, depending on $B$, such that 
 \begin{equation}\| m(.,|B|) H(\textbf{a},0)^{-1/2}(f)\|_{p}\leq C \|
f\|_{p}, 
\end{equation}  for all $f\in
C_{0}^{\infty}(\mathbb{R}^{n}).$
 \end{theo}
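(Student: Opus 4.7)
The plan is to establish the bound at $p=2$ directly from Shen's Fefferman--Phong inequality, then to obtain an $L^p$ bound for the resolvent-type operator $m(\cdot,|B|)^2 H(\textbf{a},0)^{-1}$, and finally to interpolate between these two endpoints via Stein's complex interpolation.

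For the $p=2$ case, I apply the Fefferman--Phong inequality of Shen recalled just before Lemma \ref{FP} with $V=0$. Under hypothesis \eqref{eq:shen} it reads $\|m(\cdot,|B|)u\|_{2}\leq C\|Lu\|_{2}$ for every $u\in C^{1}(\mathbb{R}^{n})$. Setting $u=H(\textbf{a},0)^{-1/2}f$ and applying \eqref{LL2} yields $\|m(\cdot,|B|)H(\textbf{a},0)^{-1/2}f\|_{2}\leq C\|f\|_{2}$.

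The core intermediate step is to show that $m(\cdot,|B|)^{2}H(\textbf{a},0)^{-1}$ is bounded on $L^{p}$ for every $1<p<\infty$. For this I would follow Shen's strategy leading to \eqref{FS}, but applied to $\Gamma_{0}$ itself via Lemma \ref{lemm:sol} in place of the gradient estimate Lemma \ref{lemm 2.3}, in order to deduce
\begin{equation*}
|\Gamma_{0}(x,y)|\leq \frac{C_{k}}{\{1+|x-y|\,m(x,|B|)\}^{k}}\,\frac{1}{|x-y|^{n-2}},\qquad x\neq y,
\end{equation*}
for every $k>0$. Setting $K(x,y)=m(x,|B|)^{2}\Gamma_{0}(x,y)$, the substitution $w=(y-x)m(x,|B|)$ shows that $\sup_{x}\int |K(x,y)|\,dy<\infty$ once $k$ is taken large enough. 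For the transposed bound $\sup_{y}\int |K(x,y)|\,dx<\infty$, I would use Lemma \ref{th:mprop}(3)--(4) to compare $m(x,|B|)$ with $m(y,|B|)$; the polynomial factors $(1+|x-y|m)^{k_{0}}$ produced in the process are again absorbed by the rapid decay by enlarging $k$. The Schur test then gives the claimed $L^{p}$ boundedness of $m(\cdot,|B|)^{2}H(\textbf{a},0)^{-1}$ for all $1\leq p\leq \infty$.

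With both endpoints in hand, I apply Stein's complex interpolation to the analytic family $T_{z}=m(\cdot,|B|)^{2z}H(\textbf{a},0)^{-z}$ on the strip $0\leq \operatorname{Re}(z)\leq 1$. On $\operatorname{Re}(z)=0$ one has $|m^{2i\tau}|=1$ pointwise and $H^{-i\tau}$ is $L^{p}$-bounded for all $1<p<\infty$ by the imaginary-power result (tool 4 of the introduction), with admissible growth in $|\tau|$. On $\operatorname{Re}(z)=1$, the functional-calculus identity $H^{-1-i\tau}=H^{-1}H^{-i\tau}$ gives $T_{1+i\tau}f=m^{2i\tau}(m^{2}H^{-1})(H^{-i\tau}f)$, whose $L^{p}$ norm is dominated by $\|m^{2}H^{-1}\|_{p\to p}\|H^{-i\tau}\|_{p\to p}\|f\|_{p}$. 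Interpolation at $z=1/2$ delivers the bound for $T_{1/2}=m(\cdot,|B|)H(\textbf{a},0)^{-1/2}$. The main obstacle is the pointwise kernel estimate on $\Gamma_{0}$ and the subsequent two Schur bounds: the decay produced by the $x$-dependent weight $m(x,|B|)$ must balance the Newtonian singularity $|x-y|^{-(n-2)}$, and the asymmetric dependence of $m$ on its base point must be handled carefully through Lemma \ref{th:mprop}; once this input is secured, the interpolation step is essentially formal.
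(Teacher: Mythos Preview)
Your proposal is correct and follows essentially the same approach as the paper: obtain the $L^p$ boundedness of $m(\cdot,|B|)^{2}H(\textbf{a},0)^{-1}$ (the paper simply cites this as Theorem~3.1 of \cite{Sh4}, whereas you reprove it via the pointwise bound on $\Gamma_{0}$ and the Schur test), then apply complex interpolation with the $L^p$-bounded imaginary powers $H^{i\tau}$ to reach $z=1/2$. Your separate $p=2$ step is harmless but unnecessary, since the interpolation between $\operatorname{Re}(z)=0$ and $\operatorname{Re}(z)=1$ already covers every $1<p<\infty$.
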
 
  This result is a consequence of the $L^p$ boundedness of $m(.,|B|)^{2}H(\textbf{a},0)^{-1}$ for all $1<p<\infty$ ( see Theorem 3.1\cite{Sh4}). 
  We shall use complex interpolation  relying on the fact that for all $y\in \mathbb{R}$, the imaginary power of Schr\"odinger operator  $H^{iy}$  has a bounded extension on $\mathbb{R}^n$, $1<p<\infty$. This result due to Hebisch \cite{H} follows from the Gaussian estimates on the heat kernel $e^{-tH}$ proved by \cite{DR} .
Here, $H^{iy}$ is defined  as  a bounded operator on $L^2(\mathbb{R}^n)$ by functional calculus ( see \cite{AB} for more details).

 \begin{rem}\label{rq}Under assumptions \eqref{eq:Shen1}, it is clear that $V H(\textbf{a}, V )^{-1}$ and $H(\textbf{a}, 0)H(\textbf{a}, V )^{-1}$ are $L^p$ bounded for all $1\leq p<\infty$.
\end{rem}
 %Le Theorem \ref{th:mH} est outil éssenciel pour les preuves qui suivent, on le fera combiner withla remarque suivante qui est une conséquence directe du Theorem \ref{th:1b}. 

\subsection{Proof of Theorem \ref{th:1a} }

It is known that $LH(\textbf{a},0)^{-1/2}$ is $L^p$ bounded for all $p\leq 2$. Thus, we consider $p>2$. We need the following lemma before we start the proof of our theorem:
 \begin{lemm}\label{red} Under assumption \eqref{eq:shen}, the $L^p$ boundedness of $L H(\textbf{a},0)^{-1/2}$ is equivalent to that of $L H(\textbf{a},0)^{-1} L^{\star}$ and $L H(\textbf{a},0)^{-1}\,m(.,|B|)$.
 \end{lemm}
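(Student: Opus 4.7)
Set $T := LH(\textbf{a},0)^{-1/2}$. The plan rests on three pillars: the identity $T^\star T = I$ on $L^2$, which follows from \eqref{LL2} and gives $TT^\star = LH(\textbf{a},0)^{-1}L^\star$; the reverse Fefferman--Phong estimate of Theorem \ref{th:1b}, which under \eqref{eq:shen} is available with $m(.,|B|)$ replacing $|B|^{1/2}$ (per the remark following that theorem); and the $L^p$-boundedness of $m(.,|B|)H(\textbf{a},0)^{-1/2}$ from Theorem \ref{th:mH} for every $1<p<\infty$.

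Forward direction. Assume $T$ is $L^p$-bounded for every $1<p<\infty$. By duality $T^\star$ is $L^p$-bounded for every such $p$, so $TT^\star = LH^{-1}L^\star$ is $L^p$-bounded by composition. Similarly, factor $LH^{-1}m(.,|B|) = T\cdot\bigl(m(.,|B|)H^{-1/2}\bigr)^\star$; the right-hand factor is $L^p$-bounded by Theorem \ref{th:mH} and duality, so the product is $L^p$-bounded.

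Reverse direction. Assume $LH^{-1}L^\star$ and $LH^{-1}m(.,|B|)$ are $L^p$-bounded for every $1<p<\infty$. By duality it suffices to show that $T^\star = H^{-1/2}L^\star$ is $L^{p'}$-bounded. Given $g$ in a suitable dense class, set $u := H^{-1}L^\star g$, so $H^{1/2}u = T^\star g$; applying Theorem \ref{th:1b} at exponent $p'$ yields
\begin{equation*}
\|T^\star g\|_{p'} \;\leq\; C\bigl(\|LH^{-1}L^\star g\|_{p'} + \|m(.,|B|)H^{-1}L^\star g\|_{p'}\bigr).
\end{equation*}
The first summand is controlled by $C\|g\|_{p'}$ directly from the assumption. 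For the second, observe that $m(.,|B|)H^{-1}L^\star = \bigl(LH^{-1}m(.,|B|)\bigr)^\star$, so duality applied to the assumption delivers $\|m(.,|B|)H^{-1}L^\star g\|_{p'} \leq C\|g\|_{p'}$.

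The principal technical nuisance is to justify that Theorem \ref{th:1b}, stated on $C_0^\infty$, actually applies to $u = H^{-1}L^\star g$, which sits a priori only in the form domain $\mathcal{V}$. This is a standard density argument, approximating $g$ by smooth compactly supported functions and using continuity of the spectral multipliers $H^{-1}$ and $H^{-1/2}$ together with the diamagnetic inequality to pass to the limit in the right-hand side.
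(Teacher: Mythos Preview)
Your argument is correct and follows essentially the same route as the paper: factor through $H^{-1/2}$, use duality, and for the reverse direction apply the reverse estimate of Theorem~\ref{th:1b} (with $m(.,|B|)$ in place of $|B|^{1/2}$, as allowed under \eqref{eq:shen}) to $u=H^{-1}L^\star g$. The only cosmetic difference is in the forward direction: you read the lemma as an equivalence quantified over all $1<p<\infty$ and therefore assume $T$ is bounded on every $L^p$, whereas the paper treats a fixed $p$ and supplies the missing $L^p$-boundedness of $T^\star=H^{-1/2}L^\star$ by invoking the known $L^r$-boundedness of $LH^{-1/2}$ for $1<r\le 2$ (Sikora, Duong--Ouhabaz--Yan) together with duality; since only the reverse implication is actually used later, this distinction is immaterial.
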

 \begin{proof}
If $LH(\textbf{a},0)^{-1/2}$ is $L^p$ bounded. By \cite{Sik} and \cite{DOY}, $LH(\textbf{a},0)^{-1/2}$ is $L^p$ bounded for all $1<p\leq 2$. By duality, $H(\textbf{a},0)^{-1/2}L^{\star}$ is then $L^{q}$ bounded for all  $q\geq 2$. Hence, $L H(\textbf{a},0)^{-1} L^{\star}$ is $L^p$ bounded.
 Due to the Theorem \ref{th:mH}, $H(\textbf{a},0)^{-1/2}m(.,|B|)$ is $L^p$ bounded, then $LH(\textbf{a},0)^{-1}m(.,|B|)$ is also $L^p$ bounded.
 
Reciprocally, if $LH(\textbf{a},0)^{-1}L^{\star}$ and $LH(\textbf{a},0)^{-1}m(.,|B|)$ are $L^p$ bounded, then their adjoints $LH(\textbf{a},0)^{-1}L^{\star}$ and $m(.,|B|)H(\textbf{a},0)^{-1}L^{\star}$ are bounded on $L^{p'}$.

Thus, if $\textbf{F}\in C^{\infty}_{0}(\mathbb{R}^n, \mathbb{C}^{n})$,  $\|H(\textbf{a},0)^{-1/2}L^{\star}\textbf{F}\|_{p'}=\|H(\textbf{a},0)^{1/2}H(\textbf{a},0)^{-1}L^{\star}\textbf{F}\|_{p'}$, where we used assumption \eqref{eq:shen} and inequality \eqref{eq:rev Lp}, and thus we obtain
$$\|H(\textbf{a},0)^{-1/2}L^{\star}\textbf{F}\|_{p'}\leq C\|LH(\textbf{a},0)^{-1}L^{\star}\textbf{F}\|_{p'}+\|m(.,|B|)H(\textbf{a},0)^{-1}L^{\star}\textbf{F}\|_{p'}\leq C \|\textbf{F}\|_{p'}.$$
Hence, $LH(\textbf{a},0)^{-1/2}$ is $L^p$ bounded.

 \end{proof}
 We will need the following result: 
 \begin{pro} \label{2thC'}Under assumption \eqref{eq:shen} for all $2<p<\infty$ there exists $C_{p}$ such that for any $f\in C^\infty_{0}(\mathbb{R}^n,\mathbb{C})$ and any $\textbf{F}\in C_{0}^\infty(\mathbb{R}^n, \mathbb{C}^n)$,
 $$
 \|m(.,|B|) \,H(\textbf{a},0)^{-1}m(.,|B|)\,f\|_p  \le C_{p} \|f\|_{p}, \,\, \textrm{and}\,\, \|m(.,|B|)\, H(\textbf{a},0)^{-1} L^{\star} \textbf{F}\|_p \le C'_p\|\textbf{F}\|_p.
 $$
 \end{pro}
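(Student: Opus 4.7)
The plan is to prove both estimates by exploiting the functional-calculus factorization $H(\textbf{a},0)^{-1}=H(\textbf{a},0)^{-1/2}\circ H(\textbf{a},0)^{-1/2}$ on $L^2$, and combining Theorem~\ref{th:mH} with the already-known $L^p$ boundedness of the Riesz transform for $1<p\le 2$. The operator $H(\textbf{a},0)^{1/2}$ is self-adjoint on $L^2$, so adjoints of bounded operators obtained from its $-1/2$-power are immediate via duality; the main task is therefore to identify, for each factor, the sharp $L^p$-range coming from the tools available in the paper.

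For the first inequality, I would write
\[
m(\cdot,|B|)\,H(\textbf{a},0)^{-1}\,m(\cdot,|B|)=\bigl[m(\cdot,|B|)\,H(\textbf{a},0)^{-1/2}\bigr]\circ\bigl[H(\textbf{a},0)^{-1/2}\,m(\cdot,|B|)\bigr].
\]
The first factor on the right is $L^p$-bounded for every $1<p<\infty$ by Theorem~\ref{th:mH}. Its $L^2$-adjoint is precisely the second factor, which is therefore $L^{p'}$-bounded for $1<p<\infty$, i.e.\ $L^p$-bounded on the same range. Composing gives the first claim for all $1<p<\infty$, in particular for $2<p<\infty$.

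For the second inequality the same strategy applies:
\[
m(\cdot,|B|)\,H(\textbf{a},0)^{-1}\,L^{\star}=\bigl[m(\cdot,|B|)\,H(\textbf{a},0)^{-1/2}\bigr]\circ\bigl[H(\textbf{a},0)^{-1/2}\,L^{\star}\bigr].
\]
The first factor is again $L^p$-bounded for $1<p<\infty$. The second factor is the $L^2$-adjoint of the standard Riesz transform $L\,H(\textbf{a},0)^{-1/2}$, which, as recalled in the Introduction (Sikora \cite{Sik} and Duong--Ouhabaz--Yan), is of weak type $(1,1)$ and hence $L^p$-bounded for $1<p\le 2$. Taking adjoints, $H(\textbf{a},0)^{-1/2}L^{\star}$ is $L^p$-bounded for $2\le p<\infty$, so the composition is $L^p$-bounded on that range, which contains $(2,\infty)$.

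The main (and essentially only) obstacle is a routine justification of the composition: for $f\in C_0^\infty$ the intermediate vector $H(\textbf{a},0)^{-1/2}m(\cdot,|B|)f$ (resp.\ $H(\textbf{a},0)^{-1/2}L^{\star}\textbf{F}$) is a priori defined only as an $L^2$-element via functional calculus, and one must verify that it lies in the $L^p$ space on which the outer factor acts. This is handled by the standard density argument (the operators are first bounded on $L^2\cap L^p$ by the factorization above, then extended by density), so no new estimates are required beyond Theorem~\ref{th:mH} and the already cited $L^p$ results for the Riesz transform.
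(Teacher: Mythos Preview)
Your proof is correct and is exactly the approach the paper takes: the paper's proof consists of the single sentence ``This is a direct consequence of Theorem~\ref{th:mH} and the $L^p$ boundedness of $LH(\textbf{a},0)^{-1/2}$ for all $1<p\leq 2$,'' which is precisely your factorization-and-duality argument spelled out.
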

 \begin{proof}This is a direct consequence of Theorem \ref{th:mH} and the $L^p$ boundedness of $LH(\textbf{a},0)^{-1/2}$ for all $1<p\leq 2$ .
 \end{proof}
 
  It suffices therefore to prove the following result:
 \begin{pro} \label{thD'}Under assumption \eqref{eq:shen}, for all $2<p<\infty$, there exists $C_{p}$ such that for any $f\in C^\infty_{0}(\mathbb{R}^n,\mathbb{C})$ and any $\textbf{F}\in C_{0}^\infty(\mathbb{R}^n, \mathbb{C}^n)$,
 $$
 \|L H(\textbf{a},0)^{-1}m(.,|B|)\,f\|_p  \le C_{p} \|f\|_{p}, \quad \textrm{and}\quad \|L H(\textbf{a},0)^{-1} L^{\star} \textbf{F}\|_p \le C_p\|\textbf{F}\|_p.
 $$
 \end{pro}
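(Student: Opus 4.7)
The plan is to apply the Shen-Auscher-Martell criterion (Theorem \ref{2theor:shen}) separately to each of $T_1=LH(\textbf{a},0)^{-1}m(.,|B|)$ and $T_2=LH(\textbf{a},0)^{-1}L^\star$, with $p_0=2$, an arbitrarily large $q_0$, and dilation parameters $\alpha_1=3$, $\alpha_2=4$. Before invoking the criterion I would first check the base $L^2$ hypothesis: for $T_2$ the identity \eqref{LL2} gives $L^2$-boundedness of both $LH(\textbf{a},0)^{-1/2}$ and, by duality, $H(\textbf{a},0)^{-1/2}L^\star$, and $T_2$ is their composition; for $T_1$ the same argument works using Theorem \ref{th:mH} in place of \eqref{LL2}.

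Next, I fix a cube $Q$ and a test function $f$ (resp. a vector-valued $\textbf{F}$) supported in $\mathbb{R}^n\setminus 4Q$ and set
$$u:=H(\textbf{a},0)^{-1}g,\qquad g:=m(.,|B|)f\ \text{(resp. }g:=L^\star\textbf{F}).$$
Since $g\equiv 0$ on $4Q$ in the sense of distributions, $u$ is a weak solution of $H(\textbf{a},0)u=0$ on $4Q$, so Lemma \ref{th:RH1} (in its $L^q$-averaged form) delivers the key reverse H\"older inequality
$$\Bigl(\aver{Q}|Lu|^q\Bigr)^{1/q}\le C\Bigl(\aver{3Q}|Lu|^2\Bigr)^{1/2}+C\Bigl(\aver{3Q}|m(.,|B|)u|^2\Bigr)^{1/2},$$
valid for every $q>2$. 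The first right-hand term is exactly the local $L^2$ mean of $T_i f$ on $\alpha_1 Q=3Q$ required by \eqref{2T:shen}.

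The second term must be absorbed into a positive operator $\mathcal{S}$ acting on $f$ that is bounded on $L^p$. Observing that $m(.,|B|)u=Sg$ with $S$ equal to $m(.,|B|)H(\textbf{a},0)^{-1}m(.,|B|)$ (resp. $m(.,|B|)H(\textbf{a},0)^{-1}L^\star$), both of which are already known to be $L^r$-bounded for every $1<r<\infty$ by Proposition \ref{2thC'}, I would take $\mathcal{S}f(x):=\bigl[\mathcal{M}(|Sf|^2)(x)\bigr]^{1/2}$, where $\mathcal{M}$ is the Hardy-Littlewood maximal operator. For every $x\in Q$ one has $\bigl(\aver{3Q}|m(.,|B|)u|^2\bigr)^{1/2}\le \mathcal{S}f(x)$, and $\mathcal{S}$ is bounded on $L^p$ whenever $p>2$ since $\mathcal{M}$ is bounded on $L^{p/2}$. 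Theorem \ref{2theor:shen} then applies with $p_0=2$, $q_0$ chosen larger than $p$, and source operator $\mathcal{S}$, yielding $L^p$-boundedness of $T_1$ and $T_2$ for all $2<p<\infty$.

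The main obstacle I anticipate is a careful verification that $u$ genuinely qualifies as a weak solution of $H(\textbf{a},0)u=0$ on $4Q$ in the sense of the preliminaries, i.e. that $Lu$ and $m(.,|B|)u$ lie in $L^2_{\mathrm{loc}}(4Q)$, so that the Caccioppoli and Fefferman-Phong bounds underlying Lemma \ref{th:RH1} are really available. This is settled using Theorem \ref{th:mH} and the $L^2$ identity \eqref{LL2} to place $u$ in the right Sobolev-type class, after which the support condition on $g$ makes the equation genuinely homogeneous inside $4Q$. Once Proposition \ref{thD'} is established, Lemma \ref{red} together with Proposition \ref{2thC'} closes the proof of Theorem \ref{th:1a}.
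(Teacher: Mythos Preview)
Your proposal is correct and follows essentially the same route as the paper's own proof: apply Theorem \ref{2theor:shen} with $p_0=2$ and arbitrary $q_0>2$, use Lemma \ref{th:RH1} on the weak solution $u=H(\textbf{a},0)^{-1}g$ (with $g=m(.,|B|)f$ or $g=L^\star\textbf{F}$) to obtain the reverse H\"older inequality on $Q$ versus $3Q$, and absorb the extra term $m(.,|B|)u$ into the auxiliary operator $S$ via Proposition \ref{2thC'} and the Hardy--Littlewood maximal function. Your additional remarks on the $L^2$ base case and on verifying that $u$ is a genuine weak solution in $4Q$ are accurate elaborations of points the paper treats implicitly.
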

 \begin{proof}  Fix a cube $Q$ and let $\textbf{F}\in C^{\infty}_{0}(\mathbb{R}^n,\mathbb{C}^{n})$ supported away from $4Q$. Set $H=H(\textbf{a},0)$.  $u=H^{-1} L^{\star}\textbf{F}$ is well defined on $\mathbb{R}^n$. In particular, the support condition on $\textbf{F}$ implies that $u$ is a weak solution of $Hu=0$ in $4Q$. Hence $|u|^2$ is subharmonic on $4Q$, and by Lemma \ref{th:RH1}, we obtain that for all $q>2$, there exists a constant $C>0$ such that
% indépendante de $Q$ et de $\textbf{F}$, telle que
 \begin{equation}
  \big{(} \avert{Q}| LH^{-1}L^{\star}\textbf{F} |^{q} \big{)}^{1/q}\leq C \big{(} \avert{3Q}| L H^{-1} L^{\star}\textbf{F}|^{2}+| m(.,|B|)H^{-1} L^{\star}\textbf{F} |^{2} \big{)}^{1/2}.
  \end{equation}
  Thus \eqref{2T:shen} holds with $T=LH^{-1}L^{\star},\, q_{0}=q,\, p_{0}=2$ and $$S\textbf{F}=\big(M(|m(.,|B|)H^{-1}L^{\star})\textbf{F}|)^{2}\big)^{\frac{1}{2}},$$ where $M$ is the maximal Hardy-Littlewood operator. Since $S$ is $L^p$ bounded for all $2<p<\infty$, then  by proposition \ref{2thC'}, $T$ is $L^p$ bounded by Theorem \ref{2theor:shen}.

We use the same argument for $L H^{-1} m(.,|B|)$. 
\end{proof}
%Les résultats de cette section ont pour conséquence le Theorem \ref{Maxshen} pour $V=0$
 \textit{Proof of Theorem \ref{Maxshen} with $V=0$:}\\ Set $H_{0}=H(\textbf{a},0)$ and $m=m(. ,|B|)$. 

 $$L_{s}L_{k} H^{-1}_{0}= L_{s} H^{-1}_{0} L_{k} + L_{s} [L_{k},H^{-1}_{0}].$$
  Let $j\geq 1$, $L_{j}H^{-1/2}_{0}$ is $L^p$ bounded for all $1<p<\infty$, then $L_{s} H^{-1}_{0} L_{k}$ is $L^p$ bounded for $1<p< \infty$. We know that 
 $$[L_{k},H^{-1}_{0}]= -H^{-1}_{0} [L_{k},H_{0}]H^{-1}_{0}$$
 $$[L_{k},H_{0}]=b_{kj}L_{j}- \partial_{j}b_{kj}  $$

 $$L_{s} H^{-1}_{0} b_{kj} L_{j} H^{-1}_{0}=L_{s} H^{-1}_{0}m  \frac{b_{kj}}{m^{2}} m L_{j} H^{-1}_{0}$$
 $$L_{s} H^{-1}_{0} \partial_{j}b_{kj}  H^{-1}_{0}=L_{s} H^{-1}_{0}m\frac{\partial_{j}b_{kj}}{m^{3}}m^{2}  H^{-1}_{0}.$$
 Here, $b_{kj}$ and $\partial_{j} b_{kj}$ are the operators of multiplication by $ b_{kj}$ et $\partial_{j}b_{kj}$.
 
 Next, we use the assumptions $|b_{kj}|\leq Cm^{2}$ and $|\partial_{j}b_{kj}|\leq C m^{3}$ and the fact that $L_{s}H^{-1}_{0}m$, $mL_{j}H^{-1}_{0}$ and $m^{2}H^{-1}_{0}$ are $L^p$ bounded for all $p>1$. Thus, $L_{s} H^{-1}_{0} b_{kj} L_{j} H^{-1}_{0}$ and $L_{s} H^{-1}_{0} \partial_{j}b_{kj} L_{j} H^{-1}_{0}$ are $L^p$ bounded. Hence, $L_{s} [L_{k},H^{-1}_{0}]$ is $L^{p}$ bounded.
% Ensuite, en utilisant le fait que $H_{0}H^{-1}$ est $L^p$ borné pour $1<p<\infty$ (d'aprés la remarque \ref{rq}),
The $L^{p}$ boundedness of $L_{s}L_{k}H_{0}^{-1}$, for all $1<p<\infty$, follows easily..

 \section{Schr\"odinger operator with electic potential on $A_{\infty}$}
 	
In this section, we will add the electric potential $ V $ to the pure magnetic Schr\"odinger operator previously studied.
If we take some sharp hypothesis on $V$, as condition \eqref{eq:Shen1}, the approach to study the Riesz transforms will be identical, all we have to do is to replace  the weight function $|B|$ by $V+|B|$ and then Theorem \ref{Maxshen} easily follows.
 Now a natural step is to improve the conditions on $V$ and extend this result to the Scr\"odinger operators with an electric potential contained in $A_{\infty}$.
  	
To prove such a result, we will start by giving some reverse H\"older type estimates of weak solutions. 	
We will also use the reverse inequalities of Theorem \ref{th:2b}, which are always established through Calder\`on-Zygmund decomposition  similar to section 3.1.
 We  will use an equivalent approach to that of \cite{AB}. We study $H(\textbf{a},V)$ considering it as a "perturbation" of $H(\textbf{a},0)$.
 By the Kato-Simon inequality, we will establish some maximal estimates using the $L^p$ boundedness of operators $V(-\Delta +V)^{-1}$ and $\Delta(-\Delta +V)^{-1}$ proved in \cite{AB}.

 \subsection{Estimates for weak solution}\label{sec:weaksol}
 Fix an open set $\Omega$. A subharmonic function on $\Omega$ is a function $v\in L^{1}_{loc}(\Omega)$ such that $\Delta v \ge 0$ in $D'(\Omega)$.

 \begin{lemm}\label{2subh}Suppose $\textbf{a}\in  L^{2}_{loc}(\mathbb{R}^{n})^{n}$ and $0 \leq V
\in  L^{1}_{loc}(\mathbb{R}^{n})$. If $u$ is a weak solution of
$H(\textbf{a},V)u=0$ in $\Omega$, then $|u|^{2}$ is a subharmonic function and 
\begin{equation}\Delta  |u|^{2}= 2 |Lu|^{2} + 2 V |u|^{2}.
\end{equation}
\end{lemm}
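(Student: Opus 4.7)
The plan is to derive the identity $\Delta|u|^2=2|Lu|^2+2V|u|^2$ directly in the sense of distributions on $\Omega$, from which subharmonicity will be immediate since the right-hand side is pointwise nonnegative. Fix a real-valued test function $\psi\in C_c^\infty(\Omega)$; because $\mathrm{supp}\,\psi\Subset\Omega$ and $u\in W(\Omega)$, the function $v=\psi u$ has compact support in $\Omega$, satisfies $V^{1/2}v\in L^2$, and $L_k v=-i(\partial_k\psi)u+\psi L_k u\in L^2$, so after extension by zero it sits in $\mathcal{D}(\mathcal{Q})$ and is a legitimate test vector in the weak equation $\mathcal{Q}(u,v)=0$.

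Next, I would expand $\mathcal{Q}(u,\psi u)$ component-by-component. Using
\[
\overline{L_j(\psi u)}=i(\partial_j\psi)\bar u+\psi\,\overline{L_j u},
\]
one finds
\[
\sum_{j}\int L_j u\,\overline{L_j v}+\int V u\,\bar v
=\int\psi\bigl(|Lu|^2+V|u|^2\bigr)+i\int\bar u\,(\nabla\psi\cdot Lu)=0.
\]
Taking real parts (and using $\mathrm{Re}(iz)=-\mathrm{Im}\,z$), this collapses to the scalar identity
\[
\int\psi\bigl(|Lu|^2+V|u|^2\bigr)\,dx=\int\nabla\psi\cdot\mathrm{Im}(\bar u\,Lu)\,dx.
\]

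The bridge to $\Delta|u|^2$ is the pointwise algebraic identity
\[
\partial_j|u|^2=2\,\mathrm{Re}\bigl((\partial_j u)\bar u\bigr)=-2\,\mathrm{Im}(\bar u\,L_j u),
\]
which one sees by substituting $\partial_j u=i L_j u+ia_j u$ and observing that the term $ia_j|u|^2$ is purely imaginary. Hence $\mathrm{Im}(\bar u\,Lu)=-\tfrac12\nabla|u|^2$, and the previous display becomes
\[
\int\psi\bigl(|Lu|^2+V|u|^2\bigr)\,dx=-\tfrac12\int\nabla\psi\cdot\nabla|u|^2\,dx=\tfrac12\int|u|^2\,\Delta\psi\,dx,
\]
where the last step is the definition of $\Delta|u|^2$ as a distribution (no regularity of $|u|^2$ is needed here). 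This is exactly $\Delta|u|^2=2|Lu|^2+2V|u|^2$ in $\mathcal{D}'(\Omega)$, and subharmonicity follows at once.

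The only real obstacle is the regularity check that $v=\psi u$ is an admissible test function for the weak form and that the integrations of the various pieces are absolutely convergent; this is routine given $u\in W(\Omega)$ and $\psi\in C_c^\infty(\Omega)$, but if one were worried one could first replace $u$ by a mollification $u_\varepsilon$, carry out the identity, and pass to the limit using $L_k u_\varepsilon\to L_k u$ and $V^{1/2}u_\varepsilon\to V^{1/2}u$ in $L^2_{loc}(\Omega)$. The handling of the $i(\partial_j a_j)$ term that appears in the formal expansion $\sum_j L_j^2 u=-\Delta u+2i\textbf{a}\cdot\nabla u+i(\mathrm{div}\,\textbf{a})u+|\textbf{a}|^2u$ is avoided entirely by working with the quadratic form rather than expanding $L_j^2$, which is convenient given that only $\textbf{a}\in L^2_{loc}$ is assumed.
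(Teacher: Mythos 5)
Your proof is correct and arrives at the identity $\Delta|u|^2 = 2|Lu|^2 + 2V|u|^2$ in $\mathcal{D}'(\Omega)$, but it takes a genuinely different route from the paper. The paper's proof is a pointwise algebraic expansion: it writes $\Delta|u|^2 = 2\,\mathrm{Re}((\Delta u)\bar u) + 2|\nabla u|^2$, substitutes $\Delta u = i\sum_k(a_k\partial_k u + \partial_k(a_k u)) + |\textbf{a}|^2 u + V u$ extracted from $H(\textbf{a},V)u = 0$, and then simplifies term by term until it reassembles $2|Lu|^2 + 2V|u|^2$. That calculation implicitly treats $u$, $\textbf{a}$ and their derivatives as if they were classical functions, and (as you note at the end) produces terms like $i(\mathrm{div}\,\textbf{a})|u|^2$ that must be argued away — which is delicate since only $\textbf{a}\in L^2_{loc}$ is assumed. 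Your argument instead tests the weak form with $\psi u$, takes real parts, and uses the scalar identity $\nabla|u|^2 = -2\,\mathrm{Im}(\bar u\,Lu)$ (valid in $L^1_{loc}$ since $u, Lu\in L^2_{loc}$) together with one integration by parts on $\psi$. This sidesteps the expansion of $L_j^2$ entirely and is the more robust formulation at the stated level of regularity; the paper's formal computation is more explicit but less careful about the distributional interpretation.

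One small caveat: the fallback you sketch at the end — mollify $u$, carry out the identity for $u_\varepsilon$, pass to the limit — does not quite work as stated, because $u_\varepsilon$ need not satisfy $H(\textbf{a},V)u_\varepsilon = 0$. What the regularity step actually requires is to extend the weak equation from $C_c^\infty(\Omega)$ test functions to $\psi u$, which is done by approximating $\psi u$ (not $u$) in the form topology by smooth compactly supported functions, using $u\in L^2_{loc}$, $L_k u\in L^2_{loc}$ and $V^{1/2}u\in L^2_{loc}$. This is indeed routine, but the mollification should be applied on the test-function side. Also note a typo in the paper's own chain of equalities: the displayed intermediate term $4\,\mathrm{Im}(\textbf{a}\cdot\nabla u\,\bar u)$ should carry the opposite sign; your derivation gives the sign consistent with $2|Lu|^2 = 2|\nabla u|^2 + 2|\textbf{a}|^2|u|^2 - 4\,\textbf{a}\cdot\mathrm{Im}((\nabla u)\bar u)$.
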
 
\begin{proof}Since
$$\Delta |u|^{2} =\Delta (u \overline{u})=2 Re((\Delta u) \overline{u}) + 2 |\nabla u|^{2}, $$
and
$H(\textbf{a},V)u=0$, then
$$\Delta u = \sum^{n}_{k=1}( i a_{k}\frac{\partial u}{\partial x_{k}}  + i\frac{\partial}{\partial x_{k}}(a_{k}u))+ |\textbf{a}|^{2}u+ Vu.$$
It follows that
\begin{align*}\Delta |u|^{2}&=2 Re\bigg{(}\sum^{n}_{k=1}( i a_{k}\frac{\partial u}{\partial x_{k}}+ i\frac{\partial}{\partial x_{k}}(a_{k}u))\, \overline{u}+ |\textbf{a}|^{2}u\overline{u}+ Vu \overline{u}\bigg{)}+ 2 |\nabla u|^{2} 
\\
&=2 Re\bigg{(}\sum^{n}_{k=1}( i a_{k}\frac{\partial u}{\partial x_{k}} \,\overline{u}+i \frac{\partial}{\partial x_{k}}(a_{k}u)\,\overline{u}\bigg{)}+ 2|\textbf{a}|^{2}|u|^{2}+ 2V|u|^{2}+ 2 |\nabla u|^{2} 
\\
&=2 Re\bigg{(}\sum^{n}_{k=1}( i a_{k}\frac{\partial u}{\partial x_{k}}\,\overline{u}+ i\frac{\partial}{\partial x_{k}}(a_{k}|u|^{2})-i a_{k} u\frac{\partial \overline{u}}{\partial x_{k}}\bigg{)}+ 2|\textbf{a}|^{2}|u|^{2}+ 2V|u|^{2}+ 2 |\nabla u|^{2}
\\
&=4 Im(\textbf{a} \nabla u \overline{u})+ 2|\textbf{a}|^{2}|u|^{2}+ 2 |\nabla u|^{2}+ 2|Vu|^{2}= 2 |Lu|^{2} + 2V|u|^{2} .
\end{align*}

\end{proof}
The main technical lemma is interesting in its own right. For a detailed proof see \cite{Buc} and \cite{AB}. It states that a form of the  mean value inequality for subharmonic functions still holds if the Lebesgue measure is replaced 
by a weighted measure of Muckenhoupt type. More precisely, 
\begin{lemm}\label{2th:sousharm}
 	Let  $\omega\in RH_{q}$ for some $1<q\le \infty$ and let  $0<s<\infty$ and $r>q$ (if $q=\infty$, $r=\infty$) such that $\omega\in RH_{r}$.  Then there exists a constant  $C\ge 0$  depending only on $\omega$,$r$,$p$,$s$ and $n$,  such that  for any  cube $Q$  and any nonnegative  subharmonic function  $f$ in  a neighborhood of $\overline{2Q}$  we have  for all $1< \mu \le 2$, 
$$
\big(\avert{Q} (\omega f^s)^{r} \big)^{1/r} \le C \, \avert{\mu Q} \omega f^s,\,\textrm{for}\, r<+\infty.
$$
And
$$
\sup_{Q}f \le \frac{ C}{\avert{Q}\omega} \, \avert{\mu Q} \omega f^s,\,\textrm{for}\, r=+\infty.
$$
\end{lemm}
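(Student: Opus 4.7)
My plan is to combine two classical ingredients: a self-improving mean value inequality for nonnegative subharmonic functions with respect to the weighted measure $\omega\,dx$, and the reverse H\"older property of $\omega$.

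The core tool, proved in \cite{Buc} and also used in \cite{AB}, is the weighted mean value inequality: for $\omega\in A_\infty$, any nonnegative subharmonic $f$ on a neighborhood of $\overline{2Q}$, any $0<s<\infty$, and any $1<\mu\le 2$,
$$\sup_{Q} f^s \;\le\; \frac{C}{\omega(\mu Q)}\int_{\mu Q}\omega f^s\, dx.$$
This is obtained by starting from the classical mean value inequality for subharmonic functions (which self-improves from $s=1$ to any $s>0$ via a standard iteration), and then converting Lebesgue averages into $\omega$-averages using the $A_\infty$ condition, which provides both the doubling of $\omega\,dx$ and a quantitative comparison of the two measures on small subcubes of $\mu Q$.

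For the case $r<\infty$, I would bound $(\omega f^s)^r \le (\sup_Q f^s)^r\,\omega^r$ pointwise on $Q$ and integrate, obtaining
$$\Bigl(\avert{Q}(\omega f^s)^r\Bigr)^{1/r} \le (\sup_Q f^s)\Bigl(\avert{Q}\omega^r\Bigr)^{1/r} \le C\,(\sup_Q f^s)\avert{Q}\omega,$$
where the last step uses $\omega\in RH_r$. Since $\omega\,dx$ is doubling, $\avert{Q}\omega \le C\,\omega(\mu Q)/|\mu Q|$, and combining with the weighted mean value inequality gives
$$(\sup_Q f^s)\,\avert{Q}\omega \;\le\; C\,\frac{\omega(\mu Q)}{|\mu Q|}\cdot \frac{1}{\omega(\mu Q)}\int_{\mu Q}\omega f^s\,dx \;=\; C\avert{\mu Q}\omega f^s,$$
which is the stated inequality. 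The case $r=\infty$ is handled identically, replacing the reverse H\"older step by $\sup_Q \omega \le C\avert{Q}\omega$, which is exactly $\omega \in RH_\infty$; this gives the version $\sup_Q f^s \le \frac{C}{\avert{Q}\omega}\avert{\mu Q}\omega f^s$.

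The main obstacle is the weighted mean value inequality itself: care must be taken with enlargement factors when iterating the self-improvement of the subharmonic mean value inequality to reach an arbitrary $1<\mu\le 2$, and when passing from Lebesgue-averages to $\omega$-averages in a way that correctly tracks both the doubling constant of $\omega\,dx$ and the $A_\infty$ constant. Once that ingredient is in place, the reverse H\"older property of $\omega$ and the doubling of $\omega\,dx$ combine cleanly to yield both conclusions of the lemma; for full details one may refer to \cite{Buc} and to the analogous argument in \cite{AB}.
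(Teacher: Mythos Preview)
Your proposal is correct and aligns with the approach of the cited references. The paper itself does not give a proof of this lemma but defers to \cite{Buc} and \cite{AB}; your sketch---the weighted mean value inequality $\sup_Q f^s \le C\,\omega(\mu Q)^{-1}\int_{\mu Q}\omega f^s$ for $A_\infty$ weights, followed by the pointwise bound $(\omega f^s)^r\le (\sup_Q f^s)^r\omega^r$, the $RH_r$ condition, and doubling---is precisely the argument found there.
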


\textbf{Throughout this section we will assume $
 V\in RH_{q}$ with $1<q\leq +\infty$ and $B$ satisfies the assumption \eqref{eq:shen} and $u$ is a weak solution of $H(\textbf{a},V)u=0$ in $4Q$}. All the constants are independant of $Q$ and $u$ but they may depend on $V$ and $q$. 
 
  First we give three important results that are the main tools for the proof of Theorem \ref{th:1a}:
  \begin{pro}\label{th:csv'} There exists a constant $C>0$ such that 
\begin{equation} \big{(} \avert{Q}| V^{1/2} u |^{2q} \big{)}^{1/2q}\leq C \big{(} \avert{3Q}| V^{1/2} u |^{2} \big{)}^{1/2}.
\end{equation}

  \end{pro}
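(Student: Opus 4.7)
The plan is to apply the weighted mean value inequality for subharmonic functions (Lemma \ref{2th:sousharm}) to the function $f = |u|^2$ with weight $\omega = V$. Lemma \ref{2subh} is the crucial input: it guarantees that $|u|^2$ is subharmonic in $4Q$, since $\Delta |u|^2 = 2|Lu|^2 + 2V|u|^2 \ge 0$ in the sense of distributions. In particular, $|u|^2$ is nonnegative and subharmonic in an open neighborhood of $\overline{2Q}$, so the hypotheses of Lemma \ref{2th:sousharm} concerning $f$ are met.

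For $1 < q < \infty$, I invoke the self-improvement of the reverse H\"older class, which produces an $\varepsilon > 0$ with $V \in RH_{q+\varepsilon}$. Applying Lemma \ref{2th:sousharm} with the parameters $\omega = V$, $s=1$, $r = q+\varepsilon$ and $\mu = 2$ yields
\begin{equation*}
\Big(\avert{Q}(V|u|^2)^{q+\varepsilon}\Big)^{1/(q+\varepsilon)} \le C \avert{2Q} V|u|^2.
\end{equation*}
Jensen's inequality on the probability space $(Q, dx/|Q|)$ gives
\begin{equation*}
\Big(\avert{Q}(V|u|^2)^{q}\Big)^{1/q} \le \Big(\avert{Q}(V|u|^2)^{q+\varepsilon}\Big)^{1/(q+\varepsilon)},
\end{equation*}
while the inclusion $2Q \subset 3Q$ gives $\avert{2Q} V|u|^2 \le (3/2)^n \avert{3Q} V|u|^2$. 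Rewriting $V|u|^2 = |V^{1/2}u|^2$ and extracting a square root delivers the desired estimate. For the endpoint $q = \infty$, I instead use the $r = \infty$ version of Lemma \ref{2th:sousharm}, namely $\sup_Q |u|^2 \le \frac{C}{\avert{Q} V} \avert{2Q} V|u|^2$, and combine it with the defining property of $RH_\infty$, i.e. $\mathrm{ess\,sup}_Q V \le C\avert{Q} V$, to recover $\|V^{1/2}u\|_{L^\infty(Q)}^2 \le C \avert{3Q}|V^{1/2}u|^2$.

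No step seems particularly delicate. The whole argument rests on choosing the right form of the weighted mean value inequality and identifying $|u|^2$ as the correct subharmonic quantity; the mild loss from $r = q+\varepsilon$ down to $r = q$ is absorbed by Jensen, and the passage from $2Q$ to $3Q$ is a trivial enlargement. The one place where care is needed is to ensure that the constants supplied by Lemma \ref{2th:sousharm} depend only on the $RH$ data of $V$ (and on $n$, $q$), so that the final constant $C$ in the statement is of the allowed form.
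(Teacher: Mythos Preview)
Your proof is correct and follows exactly the approach of the paper, which simply states that the proposition ``follows directly from Lemma \ref{2th:sousharm} and \ref{2subh}''. You have carefully spelled out the details: the subharmonicity of $|u|^2$ from Lemma \ref{2subh}, the application of Lemma \ref{2th:sousharm} with $\omega=V$, $f=|u|^2$, $s=1$ (using self-improvement to ensure $r>q$, then Jensen to return to exponent $q$), and the harmless enlargement from $2Q$ to $3Q$.
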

  \begin{proof}It follows directly from Lemma \ref{2th:sousharm} and \ref{2subh}.
  \end{proof}
\begin{pro}
\label{lemm:5}  
Set $\tilde q = \inf (q^*, 2q)$. For all $1< \mu\le 2$ and $k>0$, there is a constant $C$ such that
$$
\big( \avert{ Q}   |L u|^{\tilde q} \big)^{1/\tilde q} \le   
\frac {C} { (1+ R^2\avert{Q}V)^k} \,
 \big(\avert{\mu Q}   |L u|^2 + |m(.,|B|) u|^{2} +V|u|^2 \big)^{1/2}. 
$$
\end{pro}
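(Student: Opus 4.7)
The plan is to parallel the proof of Lemma \ref{th:RH1}, with $V$ now playing the role that $|B|$ played there. The argument splits on whether $R^{2}\avert{Q}V \le 1$ or $R^{2}\avert{Q}V>1$: in the first regime the prefactor $(1+R^{2}\avert{Q}V)^{-k}$ is comparable to $1$, so it suffices to establish the plain reverse Hölder inequality; in the second regime a genuine decay must be extracted, and the arbitrary power $k$ comes out by iteration.

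For the small-cube regime, I would revisit the pointwise estimate \eqref{eq:2}. Since $u$ is only a weak solution of $H(\textbf{a},0)u = -Vu$ in $4Q$, its derivation acquires an extra source term of the order of $R^{2}\big(\avert{\frac{1}{4}Q}|Vu|^{p}\big)^{1/p}$, which under the standing assumption $R^{2}\avert{Q}V \le 1$ is dominated by $\big(\avert{3Q}V|u|^{2}\big)^{1/2}$ after one application of Proposition \ref{th:csv'}. The pure magnetic error terms $R^{2}\big(\avert{\frac{1}{4}Q}(|\nabla B|\,|u|)^{p}\big)^{1/p}$ and $R^{2}\big(\avert{\frac{1}{4}Q}(|B|\,|Lu|)^{p}\big)^{1/p}$ are absorbed exactly as in Lemma \ref{th:RH1}, producing the term $|m(\cdot,|B|)u|^{2}$ on the right. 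The exponent $\tilde q=\inf(q^{*},2q)$ comes from two independent self-improvements: Sobolev embedding applied to $|u|$ through the diamagnetic inequality $|\nabla|u||\le|Lu|$, which upgrades $|u|$ up to $L^{q^{*}}$ when $q<n$; and Proposition \ref{th:csv'}, which upgrades $V^{1/2}u$ up to $L^{2q}$ via the reverse Hölder self-improvement $V\in RH_{q+\varepsilon}$.

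For the large-cube regime $R^{2}\avert{Q}V>1$, I would exploit the Fefferman-Phong inequality \eqref{eq:feff} combined with the Caccioppoli-type inequality to get the first decay factor: together they give an $L^{2}$ bound of the form $\avert{Q}|u|^{2}\le C\,(R^{2}\avert{Q}V)^{-1}\avert{\mu Q}(|Lu|^{2}+V|u|^{2})$. Feeding this bound into the pointwise estimate of Lemma \ref{lemm 2.3}, adapted to the inhomogeneity $-Vu$ in $H(\textbf{a},0)u=-Vu$, yields one power of the decay factor for $\big(\avert{Q}|Lu|^{\tilde q}\big)^{1/\tilde q}$; the arbitrary power $k$ is then reached by iterating on a shrinking sequence of subcubes, in the style of Shen \cite{Sh4}. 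The main obstacle I anticipate is the simultaneous management of the magnetic weight $m(\cdot,|B|)$ and the electric weight $V$: because the joint condition \eqref{eq:Shen1} is not assumed, these two quantities are not combined into a single auxiliary function, and they must be tracked on separate tracks throughout the proof (invoking Proposition \ref{th:csv'} and Lemma \ref{2th:sousharm} for the electric contribution, Proposition \ref{pro:mu} and Lemma \ref{th:RH1} for the magnetic one) before being reassembled by the triangle inequality at the last step.
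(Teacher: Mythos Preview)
Your plan is workable but takes a genuinely different route from the paper. The paper does \emph{not} split on the size of $R^{2}\avert{Q}V$, nor does it reopen the derivation of \eqref{eq:2} or Lemma~\ref{lemm 2.3} with an inhomogeneous right-hand side. Instead it introduces a harmonic decomposition: one lets $v$ solve $H(\textbf{a},0)v=0$ in $2Q$ with $v=u$ on $\partial(2Q)$, and sets $w=u-v$. For $v$, Lemma~\ref{th:RH1} applies \emph{as a black box}, yielding the $|Lu|^{2}+|m(\cdot,|B|)u|^{2}$ piece of the right-hand side (the passage from $v$ back to $u$ on the right uses the maximum principle $\sup_{2Q}|v|=\sup_{\partial(2Q)}|u|$ together with subharmonicity of $|u|^{2}$). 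For $w$, one writes $L(w\eta)$ via the fundamental solution $\Gamma_{0}$ of $H(\textbf{a},0)$ and the kernel bound \eqref{FS}; the only non-local term is the one carrying $Vu$, which Hardy--Littlewood--Sobolev sends to $L^{q^{*}}$ and Lemma~\ref{lemm:2} then controls by $\big(\avert{\mu Q}V|u|^{2}\big)^{1/2}$. The decay factor $(1+R^{2}\avert{Q}V)^{-k}$ is obtained uniformly from Lemmas~\ref{lemm:1} and \ref{lemm:2}, not by a dichotomy plus iteration.

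What each approach buys: the paper's decomposition cleanly separates the magnetic contribution (handled by existing homogeneous estimates) from the electric one (handled by potential theory for $w$), so nothing from Section~3 has to be re-proved. Your approach would, in effect, re-derive the content of Lemma~\ref{th:RH1} and Lemma~\ref{lemm 2.3} with the forcing term $-Vu$ present, and the ``iteration on shrinking subcubes'' you sketch for the large-cube regime is precisely what Lemma~\ref{lemm:1} already packages. Your remarks on why the exponent stops at $\tilde q=\inf(q^{*},2q)$ are correct, but note that Proposition~\ref{th:csv'} is not invoked in the paper's argument here; the restriction to $q^{*}$ arises solely from Hardy--Littlewood--Sobolev applied to the $Vu$ term, and $\tilde q\le q^{*}$ always.
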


\begin{pro}
\label{lemm:6}  
Let $n/2 \le q <n$, for all $1< \mu\le 2$, there is a constant $C$ such that
$$
\big( \avert{ Q}   |L u|^{ q^*} \big)^{1/ q^*} \le  C \, \big(\avert{\mu Q}   |L u|^{2q} +|m(.,|B|)\,u|^{2q}\big)^{1/2q}. 
$$
If $q \ge n$ then there is a constant $C$ such that 
  $$
 \sup_{ Q}   |L u|  \le  C \, \big(\avert{\mu Q}   |L u|^{2q}+|m(.,|B|)\,u|^{2q}\big)^{1/2q}. 
$$
\end{pro}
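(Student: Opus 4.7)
My strategy is to combine Proposition \ref{lemm:5} with a Sobolev-type self-improvement based on the inequality \eqref{eq:2} of Lemma \ref{lemm 2.3}, following the same scheme used in the case $V=0$ (Lemma \ref{th:RH1}). The additional difficulty compared to that case is that one must absorb the $V|u|^2$ term on the right-hand side of Proposition \ref{lemm:5}, since it does not appear in the target estimate of Proposition \ref{lemm:6}.

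For $n/2 \le q < n$, I would first apply Proposition \ref{lemm:5} on a nested cube $Q \subset Q_1 \subset \mu Q$, taking $\tilde q = 2q$ (which equals $\inf(q^*,2q)$ precisely because $q\ge n/2$) and bounding the decay factor $(1+R^2\avert{Q}V)^{-k}$ trivially by $1$:
$$\big(\avert{Q_1}|Lu|^{2q}\big)^{1/(2q)} \le C\big(\avert{\mu_1 Q_1}|Lu|^2+|m(.,|B|)u|^2+V|u|^2\big)^{1/2}.$$
To upgrade the left-hand side exponent from $2q$ to $q^*$, I would then apply the inequality \eqref{eq:2} with $p=2q$; the Sobolev condition $1/q^*-1/(2q)>-2/n$ is equivalent to $q>n/2$, which is admissible. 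This produces auxiliary terms of the form $R^2(\avert{\cdot}(|\nabla B||u|)^{2q})^{1/(2q)}$ and $R^2(\avert{\cdot}(|B||Lu|)^{2q})^{1/(2q)}$, which are absorbed into $|m(.,|B|)u|^{2q}$ and $|Lu|^{2q}$ thanks to $|\nabla B|\le c\,m(.,|B|)^3$, together with the fact that $|B|\lesssim m(.,|B|)^2$ (from the definition of $m$ and the reverse H\"older hypothesis on $|B|$). The comparison $R\,m(x_0,|B|)\lesssim 1$ of Lemma \ref{th:mprop} is used in the small-cube regime $R^2\avert{Q}|B|\le 1$, while Lemma \ref{lemm 2.3} handles the complementary regime. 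The case $q\ge n$ is treated identically, now taking the endpoint $q'=\infty$ in \eqref{eq:2}, which yields the sup-norm bound directly.

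The main obstacle will be the absorption of the $V|u|^2$ term into the right-hand side of Proposition \ref{lemm:6}. For this I would combine Proposition \ref{th:csv'} (the reverse H\"older inequality $(\avert{Q_1}|V^{1/2}u|^{2q})^{1/(2q)}\le C(\avert{3Q_1}|V^{1/2}u|^2)^{1/2}$) with the improved Fefferman-Phong inequality Lemma \ref{FP} applied to $\omega=V\in A_\infty$, and a Caccioppoli estimate on nested sub-cubes. The idea is to split according to the size of $R^2\avert{Q}V$: in the regime $R^2\avert{Q}V\lesssim 1$, Caccioppoli converts $V|u|^2$ into $R^{-2}|u|^2$ which is then controlled via Fefferman-Phong by $|Lu|^{2q}+|m(.,|B|)u|^{2q}$; in the regime $R^2\avert{Q}V\gg 1$, one uses the nontrivial decay factor $(1+R^2\avert{Q}V)^{-k}$ that was dropped at the first step, together with Proposition \ref{th:csv'}, to swallow the factor $V^{1/2}$. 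Careful bookkeeping of these two regimes, and choosing the parameter $\mu_1$ appropriately between $1$ and $\mu$ so that all intermediate averaged norms fit inside $\mu Q$, is the technical heart of the argument.
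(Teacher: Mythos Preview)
Your plan contains a genuine gap at the step where you invoke inequality \eqref{eq:2} (and Lemma~\ref{lemm 2.3}) to upgrade the exponent on $|Lu|$ from $2q$ to $q^*$. Those estimates are stated and proved in Section~3 for weak solutions of the \emph{pure magnetic} equation $H(\textbf{a},0)u=0$; their proof proceeds by applying $L_j$ to the equation and using the commutator $[H(\textbf{a},0),L_j]$, which produces only the $|B||Lu|$ and $|\nabla B||u|$ terms. Here, however, $u$ solves $H(\textbf{a},V)u=0$, so $H(\textbf{a},0)u=-Vu$, and the same manipulation creates additional terms of the type $V|Lu|$ and $|\nabla V||u|$. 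No hypothesis on $\nabla V$ is available (only $V\in RH_q$), so \eqref{eq:2} as stated is not at your disposal for this $u$, and your absorption scheme never sees these extra contributions.

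The paper sidesteps this entirely by \emph{reusing the decomposition} $u=v+w$ from the proof of Proposition~\ref{lemm:5}: $v$ solves $H(\textbf{a},0)v=0$, so the pure-magnetic reverse H\"older estimates of Lemma~\ref{th:RH1} give the $q^*$ bound on $|Lv|$ directly, while $w$ is handled via the representation formula with the fundamental solution $\Gamma_0$ of $H(\textbf{a},0)$. The only change relative to Proposition~\ref{lemm:5} is in the treatment of the $V$-term $III$: once Proposition~\ref{lemm:5} guarantees that $\avert{\mu'Q}|Lu|^{2q}<\infty$ with $2q>n$, one may invoke Lemma~\ref{lemm:3} (which rests on Morrey's embedding) in place of Lemma~\ref{lemm:2}. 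This converts the bound $R\,\avert{Q}V\,\sup|u|$ directly into $(\avert{\mu Q}|Lu|^{2q})^{1/(2q)}$, eliminating the $V|u|^2$ term in one stroke and making your regime-splitting argument (Caccioppoli plus Fefferman--Phong plus the decay factor) unnecessary.
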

The next lemma will be useful to prove propositions \ref{lemm:5} and \ref{lemm:6}.

\begin{lemm}
\label{lemm:1}
For all $1 \le \mu < \mu' \le 2$ and $k>0$, there is a constant $C$ such that
$$ 
\avert{\mu Q}|u|^2 \le \frac {C} { (1+ R^2\avert{Q}V)^k} \big(\avert{\mu' Q} |u|^2\big).
$$
and
$$ 
\avert{\mu Q} (|L u|^2 + V|u|^2) \le \frac {C} { (1+ R^2\avert{Q}V)^k} \big(\avert{\mu' Q} (|L u|^2 + V|u|^2) \big).
$$
\end{lemm}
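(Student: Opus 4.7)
\medskip

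\noindent\textbf{Proof plan for Lemma \ref{lemm:1}.}
Both inequalities combine two ingredients already at our disposal: the Caccioppoli inequality applied to the equation $H(\mathbf{a},V)u=0$, and the improved Fefferman--Phong inequality of Lemma \ref{FP} (recall that $V\in RH_q\subset A_\infty$). The strategy is to show a \emph{single-step decay} of the form
\begin{equation}\label{eq:onestep}
\int_{\rho Q}|u|^2\le \frac{C}{(\rho'-\rho)^2\, m_\beta(R^2\aver{Q}V)}\,R^2\int_{\rho' Q}(|Lu|^2+V|u|^2),
\end{equation}
valid for $1\le\rho<\rho'\le 2$, and to iterate it along a finite chain of scales $\mu=\mu_0<\mu_1<\cdots<\mu_N=\mu'$, using the doubling of $V$ to replace $\aver{\mu_i Q}V$ by $\aver{Q}V$ throughout.

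First I would assume without loss of generality that $R^2\aver{Q}V\ge 1$; otherwise $(1+R^2\aver{Q}V)^k\le 2^k$ and both inequalities follow immediately from the trivial $\mu Q\subset\mu' Q$ inclusion. Under $R^2\aver{Q}V\ge 1$, Lemma \ref{FP} applied on $\rho Q$ gives
\begin{equation*}
\int_{\rho Q}|u|^2\le \frac{C\, R^2}{(R^2\aver{Q}V)^\beta}\int_{\rho Q}(|Lu|^2+V|u|^2),
\end{equation*}
while Caccioppoli's inequality with a cut-off supported in $\rho' Q$ and equal to $1$ on $\rho Q$ yields
\begin{equation*}
\int_{\rho Q}(|Lu|^2+V|u|^2)\le \frac{C}{(\rho'-\rho)^2 R^2}\int_{\rho' Q}|u|^2,
\end{equation*}
since $u$ is a weak solution in $4Q$ and $f=0$. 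Combining these two produces \eqref{eq:onestep} and, after plugging one into the other, the single-step decay
\begin{equation*}
\avert{\rho Q}|u|^2\le \frac{C}{(\rho'-\rho)^2(R^2\aver{Q}V)^\beta}\avert{\rho' Q}|u|^2.
\end{equation*}

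Given $k>0$, I would fix $N$ with $N\beta\ge k$, set $\mu_i=\mu+i(\mu'-\mu)/N$, and iterate the previous inequality along the $N$ pairs $(\mu_i Q,\mu_{i+1}Q)$. This gives
\begin{equation*}
\avert{\mu Q}|u|^2\le \Big(\frac{CN^2}{(\mu'-\mu)^2}\Big)^{\!N}\frac{1}{(R^2\aver{Q}V)^{N\beta}}\avert{\mu' Q}|u|^2\le \frac{C_{k,\mu,\mu'}}{(1+R^2\aver{Q}V)^k}\avert{\mu' Q}|u|^2,
\end{equation*}
where in the last step we used $R^2\aver{Q}V\ge 1$ so that $1+R^2\aver{Q}V\le 2R^2\aver{Q}V$. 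This proves the first inequality. For the second, the same iteration scheme works, but one applies Fefferman--Phong on the intermediate cube $\mu_{i+1}Q$ to pass from $\int_{\mu_{i+1}Q}|u|^2$ back to $\int_{\mu_{i+1}Q}(|Lu|^2+V|u|^2)$; combining with Caccioppoli on $(\mu_iQ,\mu_{i+1}Q)$ yields
\begin{equation*}
\int_{\mu_iQ}(|Lu|^2+V|u|^2)\le \frac{C N^2}{(\mu'-\mu)^2 (R^2\aver{Q}V)^\beta}\int_{\mu_{i+1}Q}(|Lu|^2+V|u|^2),
\end{equation*}
and the iteration delivers the desired $(1+R^2\aver{Q}V)^{-k}$ factor.

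The main technical point, and the place where one must be careful, is that the constant $(CN^2/(\mu'-\mu)^2)^N$ produced by the iteration is harmless only because both $N$ and $\mu'-\mu$ are \emph{fixed} once $k$ is fixed; this is why the constant $C$ in the statement depends on $k$ and on $\mu'-\mu$ (but not on $Q$ nor on $u$). A second delicate point is ensuring $\aver{\mu_i Q}V\sim\aver{Q}V$ uniformly for $\mu_i\in[1,2]$, which uses the doubling of the measure $V\,dx$ coming from $V\in A_\infty$; this is what lets us replace $m_\beta(R^2\aver{\mu_iQ}V)$ by $m_\beta(R^2\aver{Q}V)$ at every step without accumulating uncontrolled constants.
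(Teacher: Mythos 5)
Your proof is correct and follows essentially the same route as the paper's: reduce to the case $R^2\aver{Q}V\geq 1$, combine the Caccioppoli inequality with the improved Fefferman--Phong inequality (Lemma \ref{FP}) on the intermediate cubes, and iterate along a finite chain of scales, using the doubling of $V\,dx$ to keep the averages $\aver{\mu_i Q}V$ uniformly comparable to $\aver{Q}V$. The only difference is that the paper condenses the iteration into a single sentence while you spell it out explicitly, including the bookkeeping of the $(\mu'-\mu)$-dependence of the Caccioppoli constants; this is a sound and complete rendering of what the paper leaves implicit.
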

\begin{proof} There is nothing to prove if  $R^{2} \avert{Q} V \leq 1$. We assume $R^{2} \avert{Q} V > 1$. The well-known Caccioppoli type argument yields for $1\le \mu <\mu' \le 2$
\begin{equation}
\label{eq:caccio}
\int_{\mu Q} |L u|^2 + V|u|^2 \le \frac {C} {R^2} \int_{\mu' Q} |u|^2.
\end{equation}
The improved Fefferman-Phong inequality \eqref{1eq:FP} and the fact that the averages
of $V$ on $\mu Q$ with $1\le \mu \le 2$ are all uniformly comparable imply for some $\beta>0$,   
$$
 \frac{ 1}{ R^{2}}  \int_{\mu Q} |u|^2 \le \frac C {(R^2 \avert{Q}V)^\beta}\int_{\mu Q}  |L u|^2 + V|u|^2  .
$$ 
The desired estimates follow readily by iterating these two inequalities.
\end{proof}

\begin{lemm}
\label{lemm:2}
For all $1<\mu \le 2$ and $k>0$, there is a constant $C$ such that  
$$
(R\avert{Q}V)^2 \, \avert{Q} |u|^2 \le \frac {C} { (1+ R^2\avert{Q}V)^k}  \big(\avert{\mu Q}   V|u|^2 \big).
$$ 
\end{lemm}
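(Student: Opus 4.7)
Writing $\alpha := R^2\avert{Q}V$, observe that $(R\avert{Q}V)^2 = \alpha^2/R^2$, so enlarging $k$ by $2$ absorbs the factor $\alpha^2$ into the decay $(1+\alpha)^{-k}$, and the inequality is equivalent to the $V$-weighted analogue of Lemma~\ref{lemm:1}:
\begin{equation*}
\frac{1}{R^2}\avert{Q}|u|^2 \;\le\; \frac{C}{(1+\alpha)^{k}}\,\avert{\mu Q} V|u|^2 \qquad (\star)
\end{equation*}
for arbitrary $k>0$. The strategy is to mimic the proof of Lemma~\ref{lemm:1} --- a Caccioppoli / improved Fefferman--Phong bootstrap --- but land on the $V$-weighted average on the right.

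First, the improved Fefferman--Phong inequality (Lemma~\ref{FP}) applied on $Q$ yields
$$R^{-2}\avert{Q}|u|^2 \;\le\; \frac{C}{m_\beta(\alpha)}\,\avert{Q}(|Lu|^2+V|u|^2),$$
which already provides a gain of $m_\beta(\alpha)^{-1}$. Iterating the second assertion of Lemma~\ref{lemm:1} finitely many times between $Q$ and an intermediate cube $\mu_1 Q$ with $1<\mu_1<\mu$ amplifies this to an arbitrary power:
$$\avert{Q}(|Lu|^2+V|u|^2) \;\le\; \frac{C}{(1+\alpha)^{K}}\,\avert{\mu_1 Q}(|Lu|^2+V|u|^2),$$
for any $K>0$. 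What remains is to reduce the Dirichlet form on $\mu_1 Q$ to $\avert{\mu Q}V|u|^2$ at the expense of at most a polynomial factor $(1+\alpha)^{C_0}$, which can then be absorbed into the gain from Lemma~\ref{lemm:1} by taking $K$ large.

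For this last reduction I would split $|Lu|^2 + V|u|^2$ into its two pieces. The $V|u|^2$ piece is dominated by $C\avert{\mu Q}V|u|^2$ via the mean-value inequality for subharmonic functions against an $A_\infty$ weight (Lemma~\ref{2th:sousharm}): since $\Delta|u|^2 = 2(|Lu|^2+V|u|^2)\ge 0$ by Lemma~\ref{2subh}, the function $|u|^2$ is subharmonic, and $V\in RH_q\subset A_\infty$. The $|Lu|^2$ piece is handled by Caccioppoli, $\avert{\mu_1 Q}|Lu|^2 \le CR^{-2}\avert{\mu_2 Q}|u|^2$, followed by a re-use of Fefferman--Phong and Caccioppoli on $\mu_2 Q$ to cycle back to the Dirichlet form on a slightly larger cube, all while keeping the cubes inside $(1,\mu]Q$.

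The main obstacle is that the last reduction is a priori circular: bounding $R^{-2}\avert{\mu_2 Q}|u|^2$ in terms of $\avert{\mu Q}V|u|^2$ is exactly $(\star)$ on a shifted cube. I expect to break the circularity as in the proof of Lemma~\ref{lemm:1} itself: one iteration of (FP + Caccioppoli + iterated Lemma~\ref{lemm:1}) suffices to produce a decay $(1+\alpha)^{-k_0}$ with $k_0$ small but positive, and then iterating the whole cycle a bounded number of times yields arbitrary $k$. Balancing the polynomial loss from the $|Lu|^2$ piece against the geometric gain from Lemma~\ref{lemm:1} closes the bootstrap.
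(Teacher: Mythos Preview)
Your reformulation $(\star)$ is not equivalent to the lemma; it is strictly stronger when $\alpha=R^2\avert{Q}V\le 1$, and in fact false in that regime. Take $V$ locally comparable to a small constant and a cube $Q$ with $R$ so small that both $\alpha$ and $R\,m(x_0,|B|)$ are $\ll 1$; a weak solution $u$ of $H(\textbf a,V)u=0$ on $4Q$ then has $\avert{Q}|u|^2\sim\avert{\mu Q}|u|^2$, while $\avert{\mu Q}V|u|^2\sim(\avert{Q}V)\avert{\mu Q}|u|^2$, so $(\star)$ would force $1\le C\alpha$. The error is in ``absorbing $\alpha^2$ into the decay'': for $\alpha\le 1$ the factor $(1+\alpha)^{-k}$ is harmless, but dividing by $\alpha^2$ is not. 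Consequently your whole bootstrap is aimed at the wrong target, and the circularity you flag for the $|Lu|^2$ piece never resolves, because each pass through Fefferman--Phong plus Caccioppoli gains nothing when $\alpha\le 1$.

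The paper's proof avoids the Dirichlet form entirely. Write $(R\avert{Q}V)^2=\alpha\cdot(\avert{Q}V)$, use the \emph{first} inequality of Lemma~\ref{lemm:1} to get $\avert{Q}|u|^2\le C(1+\alpha)^{-k}\avert{\mu'Q}|u|^2$ (so only one power of $1+\alpha$ is sacrificed to absorb $\alpha$), bound $\avert{\mu'Q}|u|^2\le\sup_{\mu'Q}|u|^2$, and finish with the weighted subharmonic mean value inequality (Lemma~\ref{2th:sousharm}): since $|u|^2$ is subharmonic and $V\in A_\infty$, one has $(\avert{\mu'Q}V)\sup_{\mu'Q}|u|^2\le C\,\avert{\mu Q}V|u|^2$. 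Three lines, no $|Lu|^2$, no iteration. The point you are missing is that the single factor $\avert{Q}V$ left over on the left pairs directly with $\sup|u|^2$ via Lemma~\ref{2th:sousharm}; there is no need to pass through $|Lu|^2+V|u|^2$.
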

\begin{proof} Using Lemma \ref{lemm:1} with $k>1$ and $1<\mu' <\mu$ and subsequently Lemma \ref{2th:sousharm}, we have:
$$
(R \avert{Q} V)^2 \avert{ Q}|u|^2 \le \frac {C  \avert{Q}V\,\avert{ \mu' Q}|u|^2} { (1+ R^2\avert{Q}V)^{k-1}}  \le \frac {C  \avert{\mu' Q}V\,\sup_{ \mu' Q}|u|^2} { (1+ R^2\avert{Q}V)^{k-1}}   \le \frac {C  \avert{\mu Q} (V|u|^2)} { (1+ R^2\avert{Q}V)^{k-1}}  .
$$
 
 \end{proof}

\begin{lemm}
\label{lemm:3}
For all $1<\mu\le 2$, $k>0$ and $n <p<\infty$, there is a constant $C$ such that
$$
(R\avert{Q}V)^2 \, \avert{Q} |u|^2 \le  \frac {C} { (1+ R^2\avert{Q}V)^k}  \big( \avert{\mu Q}   |L u|^p \big)^{2/p}.
$$
\end{lemm}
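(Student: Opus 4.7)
The plan is to combine Lemma \ref{lemm:2}, which already carries the desired decay $(1+R^2\avert{Q}V)^{-k}$ at the cost of producing $\avert{\mu' Q}V|u|^2$ on the right, with Morrey's embedding $W^{1,p}\hookrightarrow C^{0}$ (available because $p>n$) applied to the scalar subharmonic function $|u|$. The diamagnetic inequality $\bigl|\nabla|u|\bigr|\le|Lu|$ then converts the $L^p$-norm of $\nabla|u|$ into the $L^p$-norm of $Lu$, which is exactly what appears on the right-hand side of the target estimate.

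Concretely, I would fix intermediate radii $1<\mu_1<\mu_2<\mu$ and apply Lemma \ref{lemm:2} with a large parameter $k_0$; using subharmonicity of $|u|^2$ (Lemma \ref{2subh}) and doubling of the $A_\infty$ weight $V$, this gives
\[
(R\avert{Q}V)^2\avert{Q}|u|^2 \le \frac{C\avert{Q}V}{(1+R^2\avert{Q}V)^{k_0}}\sup_{\mu_1 Q}|u|^2.
\]
Morrey's embedding combined with the diamagnetic inequality yields
\[
\sup_{\mu_1 Q}|u|^2 \le C\avert{\mu_2 Q}|u|^2 + CR^2\bigl(\avert{\mu_2 Q}|Lu|^p\bigr)^{2/p}.
\]
The $|Lu|^p$ contribution becomes $\frac{CR^2\avert{Q}V}{(1+R^2\avert{Q}V)^{k_0}}(\avert{\mu Q}|Lu|^p)^{2/p}\le\frac{C}{(1+R^2\avert{Q}V)^{k_0-1}}(\avert{\mu Q}|Lu|^p)^{2/p}$, which is of the desired form as soon as $k_0\ge k+1$.

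The residual $\frac{C\avert{Q}V}{(1+R^2\avert{Q}V)^{k_0}}\avert{\mu_2 Q}|u|^2$ I would treat by iterating the pair (Lemma \ref{lemm:1}, then Morrey): each iteration trades $\avert{\mu_j Q}|u|^2$ for $\avert{\mu_{j+1} Q}|u|^2$ with an extra decay factor $(1+R^2\avert{Q}V)^{-k_1}$ and produces an additional $|Lu|^p$ contribution at a cube contained in $\mu Q$. After a finite number $N$ of iterations the total decay $(1+R^2\avert{Q}V)^{-k_0-Nk_1}$ dominates the constants accumulated by successive Morrey steps, and the final residual $\avert{\mu Q}|u|^2/R^2$ is absorbed by the Fefferman--Phong inequality of Lemma \ref{FP} with exponent $p$. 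The main obstacle is precisely this bootstrap: the intermediate radii $\mu_j$ must stay strictly inside $(1,\mu)$ so that every average remains over a cube in $\mu Q$, and $N$ must be chosen large enough to overcome the constant blow-up from the Morrey steps while leaving room for the decay gained from Lemma \ref{lemm:1} -- the analogue of the $A_\infty$/Fefferman--Phong scheme used in \cite{AB} in the absence of magnetic field.
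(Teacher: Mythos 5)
Your approach is genuinely different from the paper's, and unfortunately it has a real gap at the point where you try to close the loop.

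The paper's proof never iterates on shrinking annuli. Instead it tests the equation $H(\textbf{a},V)u=0$ against $\bar u\eta^2$, which gives $\int |Lu|^2\eta^2 + V|u|^2\eta^2 = 2\int Lu\cdot\nabla\eta\,\bar u\eta$, and sets $X=(R^2\avert{Q}V)\int V|u|^2\eta^2$, $Y=(\avert{\mu Q}|Lu|^p)^{2/p}$, $Z=\avert{Q}V\int|u|^2\eta^2$. Cauchy--Schwarz gives $X\lesssim (R^2\avert{Q}V)^{1/2}|\mu Q|^{1/2}Y^{1/2}Z^{1/2}$. Then --- and this is the decisive move --- it uses Morrey not to produce another $\avert{\mu'Q}|u|^2$ residual but to write $|u(x)|\le\inf_Q|u|+CR\,Y^{1/2}(\cdot)^\alpha$, choosing $y$ with $|u(y)|=\inf_Q|u|$, so that $(\avert{Q}V)\inf_Q|u|^2\le\avert{Q}(V|u|^2)$. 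This turns $Z$ into $CX+C(R^2\avert{Q}V)^2|\mu Q|\,Y$, so $X\le C|\mu Q|^{1/2}Y^{1/2}(X+(R^2\avert{Q}V)^2|\mu Q|Y)^{1/2}$, which self-absorbs via $2ab\le\epsilon^{-1}a^2+\epsilon b^2$. The decay in $(1+R^2\avert{Q}V)^k$ then comes from a single application of the lower bound for $X$ (subharmonicity plus Lemma \ref{lemm:1}), not from an iteration.

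The gap in your argument is precisely the absence of this self-absorption mechanism. After your first Lemma \ref{lemm:2} $+$ Morrey step you are left with a residual $\frac{C\avert{Q}V}{(1+R^2\avert{Q}V)^{k_0}}\avert{\mu_2 Q}|u|^2$, and you propose to kill it by alternating Lemma \ref{lemm:1} and Morrey and then invoking Fefferman--Phong. Two problems. First, the iteration does not close: every Morrey step reproduces an $\avert{\mu_{j+1}Q}|u|^2$ residual with a constant $\ge 1$, and the decay gained from Lemma \ref{lemm:1} is bounded (at most $2^{-Nk_1}$) on the regime $R^2\avert{Q}V\approx 1$, so there is no uniform choice of $N$ that makes the accumulated constants small; you would need to absorb back into the left-hand side, which your scheme never sets up. Second, the terminal Fefferman--Phong step is circular: Lemma \ref{FP} bounds $\frac{m_\beta(R^p\avert{Q}V)}{R^p}\int_Q|u|^p$ by $\int_Q |Lu|^p+V|u|^p$, and the $V|u|^p$ term is exactly the kind of quantity the lemma is supposed to control --- it cannot be dropped, and there is no $V|u|^p$ on the right-hand side of the target. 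In other words, Fefferman--Phong never gives a bound on $|u|$ in terms of $|Lu|$ alone, which is what you would need. The missing idea is the $\inf_Q|u|$ trick, which converts the ``bad'' part of Morrey back into the quantity $X$ you are estimating and lets a single Young inequality finish the job.
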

\begin{proof} If $\avert{\mu Q} |L u|^p
=\infty$ , there is nothing to prove. Assume, therefore, that $\avert{\mu Q} |L u|^p
<\infty$.
  Let $1<\nu <\mu$ and $\eta$ be a smooth non-negative function, bounded by 1, equal to 1 on $\nu Q$ with support on $\mu Q$ and whose gradient is bounded by $ C/R$ and Laplacian by $C/R^2$.

Integrating the equation $H(\textbf{a},0) u + Vu =0$ against� $\bar{u}\eta^2$. \\
Since
%$$H(\textbf{a},0)u= -\Delta u - \, \frac{1}{i}\sum^{n}_{j=1}a_{j}\frac{\partial}{\partial x_{j}}u\, -\frac{2}{i}\sum^{n}_{j=1}\frac{\partial a_{j}}{\partial x_{j}}u+ |a|^{2}u$$

 $$H(\textbf{a},V)u = \sum_{j=1}^{n} L_{j}^{\star} L_{j} u + V u,$$
  
$$\int H(\textbf{a},V)u\,\bar{u}\eta^2 =  \sum_{j=1}^{n}\int  L_{j} u\,\overline{L_{j}(u\eta^2)} + \int V |u|^{2}\,\eta^2,$$
then
$$
\int  |L u|^2 \eta^2 + V|u|^2 \eta^2= 2 \int L u \cdot \nabla \eta\, \bar{u} \eta, $$
hence
$$\int  V|u|^2 \eta^2\le \frac C R \bigg( \int_{\mu Q} |L u|^2 \bigg)^{1/2} \bigg( \int |u|^2 \eta^2\bigg)^{1/2},$$
\begin{equation}\label{X}
X \le  {C\, {(R^2\avert{Q}V)^{1/2}}  |\mu Q|^{1/2}\, Y^{1/2}\, Z^{1/2}}
\end{equation}
%$$+{(R^2\avert{Q}V)}|\mu Q|\,Z $$$$+{C\, {(R^2\avert{Q}V)^{3/2}}  |\mu Q|^{3/2}\, Y^{1/2}\, Z^{1/2}}+{C\, {(R^2\avert{Q}V  |\mu Q|)^{2}}\, Z}$$
where we set $X=(R^2\avert{Q}V) \int V |u|^2 \eta^2$,  $Y= \big(\avert{\mu Q} |L u|^p\big)^{2/p}$
and $Z= \avert{Q}V \int |u|^2 \eta^2$.
By Morrey's embedding theorem and diamagnetic inequality \eqref{2diam}, $u$ is H\"older continuous with exponent $\alpha= 1- n/p$. Hence for all $x,y\in \mu Q$, we have

$$
\left||u(x)| -|u(y)|\right| \le C \bigg( \frac{|x-y|}R\bigg)^\alpha \, R\, \big(\avert{\mu Q} |\nabla |u||^p\big)^{1/p}\leq C \bigg( \frac{|x-y|}R\bigg)^\alpha \, R\, Y^{1/2}.
$$
 We pick $y\in \overline Q$ such that $|u(y)|= \inf_{Q}|u|$. Then 
\begin{align*}
Z=\avert{Q}V \int |u|^2 \eta^2 &\le 2  (\avert{Q} V) \inf_{Q}|u|^2 \int \eta^2 + 2 (\avert{Q} V) \int \left||u(x)| -|u(y)|\right|^2 \eta^2(x)\, dx
\\
& \le 2 \big(\avert{Q} (V|u|^2)\big) \int \eta^2 + C (\avert{Q} V) R^2 Y \, \int 
 \bigg( \frac{|x-y|}R\bigg)^{2\alpha} \eta^2(x)\, dx
 \\
 & \le C   \big(\avert{Q} (V|u|^2)\big)  |Q| + C (\avert{Q} V) R^2 Y\,|\mu Q|  \\
 & \le C  \int V|u|^2\eta^2 + C (\avert{Q} V) R^2 Y\,|\mu Q|.
 \end{align*}
 where, in the penultimate inequality, we used the support condition on $\eta$ and $0\le \eta \le 1$, and in the last,    $\eta=1$ on $ Q$. 
 Using the previous inequalities, we obtain
 $$X  \le C  |\mu Q| ^{1/2} \, Y^{1/2} \, \big( CX + C(R^2\avert{Q}V)^2  |\mu Q| Y\big)^{1/2},
 $$
 which, as $2ab \le \epsilon^{-1} a^2 + \epsilon b^2$ for all $a,b \ge 0$ and $\epsilon>0$, implies
  $$
 X \le C(1 + R^2 \avert{Q}V )^2 \,  |\mu Q| \, Y.
 $$
 Next, let $1< \nu'<\nu$. Using $\eta=1$ on $\nu Q$ Lemma \ref{2th:sousharm} and Lemma \ref{lemm:1}
 $$
 \int  V|u|^2 \eta^2 \ge \int_{\nu Q} V |u|^2 \ge C \avert{\nu' Q } V \, \int_{\nu' Q} |u|^2 \ge C   (\avert{Q } V)  (1 + R^2 \avert{Q}V )^k \int_{ Q} |u|^2,
 $$
 hence
 $$
 X \ge C (R \avert{Q}V)^2  (1 + R^2 \avert{Q}V )^k \int_{Q} |u|^2.
 $$
 The upper and lower bounds for $X$ yield the lemma.

 \end{proof}
 
 \begin{lemm}\label{th:Lu}Let $q<n$, there exists a constant $C>0$ such that 
 %$V\in RH_{q}$, with$q\geq2$.
 %Soit $u$ une solution faible de $H(\textbf{a},V)u=0$ dans $4Q$, on a
  \begin{equation}\label{eq1'}(\avert{Q}|Lu|^{q^{\star}})^{1/q^{\star}}\leq C (\frac{1}{R}+R\avert{Q} V)  (\avert{3Q} | u|^{2})^{1/2} .
   \end{equation}
   Consider $q\geq n$, there is a constant $C>0$ such that    \begin{equation}\label{eq1''}\sup_{Q}|Lu|\leq C (\frac{1}{R}+R\avert{Q} V)  (\avert{3Q} | u|^{2})^{1/2} .
   \end{equation}
 \end{lemm}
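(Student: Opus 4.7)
My plan is to adapt Shen's argument from \cite{Sh1} for the non-magnetic setting by splitting $u$ into a pure-magnetic ``regular'' piece plus a perturbation that absorbs the $Vu$ source. Concretely, set $w = -H(\textbf{a},0)^{-1}(Vu\chi_{2Q})$ and $v = u - w$; then on $2Q$ one has $H(\textbf{a},0)v = H(\textbf{a},0)u + Vu = -Vu + Vu = 0$, so $v$ is a weak solution of the pure magnetic equation on $2Q$ and the results of Section~3 apply, while $w$ is an explicit global magnetic Newtonian potential amenable to the kernel estimate \eqref{FS}. With this decomposition $Lu = Lv + Lw$, and the two pieces are estimated separately.

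For $Lv$, the pointwise bound from Lemma \ref{lemm 2.3} applied on a cube slightly smaller than $2Q$ gives $\sup_Q|Lv| \leq \frac{C}{R}\bigl(\avert{2Q}|v|^2\bigr)^{1/2}$. Since $|u|^2$ is subharmonic (Lemma \ref{2subh}), the mean value inequality yields $\sup_{2Q}|u| \leq C\bigl(\avert{3Q}|u|^2\bigr)^{1/2}$, and it only remains to bound $\|w\|_{L^2(2Q)}$. For this I would use the Kato-Simon inequality \eqref{eq:KS}, which gives $|w|\leq (-\Delta)^{-1}(V|u|\chi_{2Q})$ pointwise, and then estimate the resulting Newtonian potential on $2Q$ using the reverse H\"older property of $V$, producing a bound of the form $\|w\|_{L^2(2Q)} \leq CR^2 \avert{Q}V \cdot \bigl(\avert{3Q}|u|^2\bigr)^{1/2}$. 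Divided by $R$, this contributes exactly the $R\avert{Q}V$ factor in the statement.

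For $Lw$, differentiating the representation $w(x) = -\int \Gamma_0(x,y) V(y) u(y) \chi_{2Q}(y)\,dy$ and applying the Fefferman-Shen kernel estimate \eqref{FS} (with $k=0$) gives the pointwise majorisation $|Lw(x)| \leq C \int |x-y|^{-(n-1)} V(y) |u(y)|\chi_{2Q}(y)\,dy$, i.e.\ $|Lw|$ is controlled by the Riesz potential $I_1$ of order $1$ of $V|u|\chi_{2Q}$. Reverse H\"older together with the subharmonic bound on $u$ gives $\|Vu\|_{L^q(2Q)} \leq CR^{n/q}\avert{Q}V \bigl(\avert{3Q}|u|^2\bigr)^{1/2}$, and for $q<n$ the mapping $I_1 \colon L^q \to L^{q^*}$, together with the elementary identity $n/q - n/q^* = 1$, yields $\bigl(\avert{Q}|Lw|^{q^*}\bigr)^{1/q^*} \leq CR\avert{Q}V\bigl(\avert{3Q}|u|^2\bigr)^{1/2}$. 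The case $q \geq n$ is handled by H\"older with the dual exponent $q' < n/(n-1)$ when $q>n$, and by the self-improvement $V \in RH_{q+\varepsilon}$ at the borderline $q = n$, upgrading the bound to a uniform sup estimate on $Q$.

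The main technical hurdle will be the $L^2$ bound on $w$ over $2Q$: since $w$ is defined globally, one must carefully extract the correct $R$-dependence when restricting to the bounded cube, and track the factor $\avert{Q}V$ coming from reverse H\"older. The Kato-Simon inequality is the key reduction to the classical Laplacian case, after which standard Newtonian potential bounds close the loop; combining the resulting estimates for $Lv$ and $Lw$ then yields the two claimed inequalities.
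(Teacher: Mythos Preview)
Your treatment of $Lw$ is fine and matches the paper's handling of the $Vu$ term. The gap is in the $Lv$ step: the bound $\|w\|_{L^2(2Q)} \le CR^2\,\avert{Q}V\,(\avert{3Q}|u|^2)^{1/2}$ does not follow from Kato--Simon plus ``standard Newtonian potential bounds'' over the full range $1<q<n$ allowed by the lemma. After $|w|\le I_2(V|u|\chi_{2Q})$ and $|u|\le\sup_{2Q}|u|$, Hardy--Littlewood--Sobolev places $w$ only in $L^{q^{**}}$ with $1/q^{**}=1/q-2/n$, and passing to $L^2(2Q)$ by H\"older requires $q^{**}\ge 2$, i.e.\ $q\ge 2n/(n+4)$, which excludes a genuine interval of exponents once $n\ge 5$. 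The obstruction is real, not an artefact of the route: for $V(y)=|y|^{-\alpha}$ with $\alpha$ just above $2+n/2$ (so $V\in RH_q$ only for $q<2n/(n+4)$) one has $I_2(V\chi_{2Q})\notin L^2(Q)$, and the weak-solution structure of $u$ gives no better integrability for $V|u|$ than $L^q$.

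The paper sidesteps this entirely by never decomposing $u$. It multiplies by a cutoff $\phi\in C_0^\infty(2Q)$, $\phi\equiv 1$ on $Q$, writes $u\phi=H(\textbf{a},0)^{-1}\big(\tfrac{2}{i}Lu\cdot\nabla\phi-u\Delta\phi-Vu\phi\big)$, and applies $L$. The $Vu\phi$ term is exactly your $Lw$; the two commutator terms $Lu\cdot\nabla\phi$ and $u\Delta\phi$ play the role of your $Lv$ but are supported on the annulus where $\nabla\phi\ne 0$, so $|L_x\Gamma_0(x_0,y)|\le CR^{1-n}$ there, and Caccioppoli converts them directly into the $\tfrac{C}{R}(\avert{3Q}|u|^2)^{1/2}$ contribution --- no auxiliary $L^2$ bound on any piece of $u$ is ever needed, and the argument works for every $q>1$. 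Your decomposition idea does reappear in the proof of Proposition~\ref{lemm:5}, but with $w$ solving the \emph{local} Dirichlet problem $H(\textbf{a},0)w=-Vu$ on $2Q$ with $w=0$ on $\partial(2Q)$: there energy comparison plus Poincar\'e give $\|w\|_{L^2(2Q)}\le C\|u\|_{L^2(3Q)}$ for free, which is precisely the ingredient missing from your global construction.
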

 \begin{proof} Set $\phi \in C_{0}^{\infty}(2Q)$, with $\phi \equiv 1$ in $Q$ , $|\nabla \phi|\leq {C}/{R}$ and $|\nabla^{2} \phi|\leq {C}/{R^{2}}$.\\ Since
 $$ H(\textbf{a},0)(u \phi)= \frac{2}{i} Lu. \nabla \phi- u \Delta \phi  - V u \phi,$$
then
 $$ u(x)\phi(x) = \int_{\mathbb{R}^n} \Gamma_{0}(x,y)[\frac{2}{i} Lu(y) .\nabla \phi(y)- u(y) \Delta \phi(y)  - V(y) u(y) \phi(y)]\, dy. $$
By \eqref{FS}, we obtain for all $x_{0} \in Q$
 
 $$|Lu(x_0)|\leq \frac{C}{R^n}\int_{2Q} |L u(y)|dy + \frac{C}{R^{n+1}}\int_{2Q} | u(y)|dy +C \int_{2Q} \frac{ V(y) |u(y)|}{|x_{0}-y|^{n-1}}dy. $$
 Using Caccioppoli type inequality, it follows that  
 $$|Lu(x_0)|\leq  \frac{C}{R}(\avert{2Q} | u(y)|^{2}dy)^{1/2} + C\int_{2Q} \frac{ V(y) |u(y)|}{|x_{0}-y|^{n-1}}dy. $$
  %$$\leq  \frac{C}{R}\sup_{y\in 2Q} | u(y)| +\frac{C}{R^n}\int_{2Q} \frac{ V(y) |u(y)|}{|x_{0}-y|^{n-1}}dy $$
  If $q< n$,
   $$(\avert{Q}|Lu|^{q^{\star}})^{1/q^{\star}}\leq  \frac{C}{R} \sup_{ 2Q} | u| + C\bigg{(}\avert{2Q} \left\{\int_{2Q}\frac{ V(y) |u(y)|}{|x_{0}-y|^{n-1}}dy\right\}^{q^{\star}}dx\bigg{)}^{\frac{1}{q^{\star}}} .$$
  By Hardy-Littlewood-Sobolev inequality, we obtain
 \begin{equation}\label{eq:1'}(\avert{Q}|Lu|^{q^{\star}})^{1/q^{\star}}\leq  \frac{C}{R} \sup_{\frac{5}{2} Q} | u| + C\, R(\avert{2Q} |Vu|^{q})^{1/q} 
 \end{equation}
  $$\qquad\leq  \frac{C}{R} \sup_{\frac{5}{2}Q} | u| + C\,R(\avert{Q} |V|^{q})^{1/q} \sup_{2Q} |u|$$
   $$\qquad\leq  \frac{C}{R} \sup_{ \frac{5}{2}Q} | u| +C\,R\avert{Q} |V| \sup_{\frac{5}{2}Q} |u|.$$
  Subharmonicity of $|u|^{2}$ yields 
   $$(\avert{Q}|Lu|^{q^{\star}})^{1/q^{\star}}\leq C (\frac{1}{R}+R\avert{Q} V)  (\avert{3Q} | u|^{2})^{1/2}.$$
   If $q\geq n$
   $$\sup_{Q}|Lu|\leq  \frac{C}{R} \sup_{ 2Q} | u| + C \sup_{2Q} | u(y)| \sup_{x\in Q} \bigg{(}\int_{2Q}\frac{ V(y) }{|x-y|^{n-1}}dy\bigg{)} $$
   $$\leq  \frac{C}{R} \sup_{ 2Q} | u| +  \frac{C}{R^{n-1}}  \sup_{2Q} | u| \int_{2Q} V(y)dy  .$$
Here we used H\"older inequality  with $V\in L^q(2Q)$ and the fact that $V\in RH_q$.
Hence, inequality \eqref{eq1''} holds. 
\end{proof}

\begin{lemm}
\label{lemm:4}
 Let $1< \mu\le 2$ and $k>0$, if $n/2 \le q <n$, then there is a constant $C$ such that  
$$
\big( \avert{ Q}   |L u|^{ q^*} \big)^{1/ q^*} \le  \frac {C} { R(1+ R^2\avert{Q}V)^k} \big( \sup_{\mu Q} |u|\big). 
$$
If  $q \ge n$, then there is a constant $C$ such that 
$$
 \sup_{ Q}   |L u|  \le    \frac {C} { R(1+ R^2\avert{Q}V)^k}  \big(\sup_{\mu Q} |u|\big). 
$$

\end{lemm}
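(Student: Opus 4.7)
The strategy is to combine a slightly more localized version of Lemma \ref{th:Lu} with the $L^2$-decay provided by Lemma \ref{lemm:1} and the subharmonicity of $|u|^2$ from Lemma \ref{2subh}.

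First, I revisit the proof of Lemma \ref{th:Lu} with a tighter cutoff. Fix dilation parameters $1<\mu_1<\mu_2<\mu$ once and for all, and choose $\phi\in C_0^\infty(\mu_2 Q)$ with $\phi\equiv 1$ on $\mu_1 Q$ and $\|\nabla\phi\|_\infty+R\|\Delta\phi\|_\infty\le C_{\mu_1,\mu_2}/R$. Representing $u\phi$ through the Green kernel $\Gamma_0$ of $H(\textbf{a},0)$, using the kernel bound \eqref{FS} with $k=0$, and then running exactly the argument of Lemma \ref{th:Lu} (Caccioppoli, Hardy--Littlewood--Sobolev, and the $RH_q$ property of $V$) yields
\[
\big(\avert{Q}|Lu|^{q^*}\big)^{1/q^*}\le \frac{C_{\mu_1,\mu_2}\bigl(1+R^2\avert{Q}V\bigr)}{R}\,\sup_{\mu_2 Q}|u|
\]
when $n/2\le q<n$, and the analogous bound with $\sup_{Q}|Lu|$ on the left when $q\ge n$. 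The averages $\avert{\mu_2 Q}V$ that appear along the way are comparable to $\avert{Q}V$ by the doubling property of the $A_\infty$ weight $V$.

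Next, I convert $\sup_{\mu_2 Q}|u|$ into $\sup_{\mu Q}|u|$ and simultaneously extract the required decay factor. By Lemma \ref{2subh}, $|u|^2$ is subharmonic on $4Q$, so the mean value property gives $\sup_{\mu_2 Q}|u|^2\le C\avert{\mu_3 Q}|u|^2$ for any $\mu_3$ with $\mu_2<\mu_3<\mu$. Applying Lemma \ref{lemm:1} with exponent $2(k+1)$ (admissible since that lemma allows any positive power) and then bounding $\avert{\mu Q}|u|^2$ by $\sup_{\mu Q}|u|^2$, I get
\[
\sup_{\mu_2 Q}|u|\le \frac{C}{(1+R^2\avert{Q}V)^{k+1}}\,\sup_{\mu Q}|u|.
\]
Substituting into the estimate from the first step cancels the factor $(1+R^2\avert{Q}V)$ in the numerator and leaves exactly the prefactor $R^{-1}(1+R^2\avert{Q}V)^{-k}$ demanded by the lemma, in both the $L^{q^*}$ case and the $\sup$ case.

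The main technical point is the localization in the first step: one must verify that, with the cutoff supported in $\mu_2 Q$ instead of $2Q$, the Hardy--Littlewood--Sobolev step (for $q<n$), the distribution-type estimate on $\int V/|x-\cdot|^{n-1}$ (for $q\ge n$), and the Caccioppoli inequality all retain their form. This is essentially bookkeeping, since all constants depend only on the fixed dilation parameters $\mu_1,\mu_2$ and every cube involved lies in $4Q$, where $u$ is a weak solution. No ingredients beyond those already established (doubling of $V$, subharmonicity of $|u|^2$, and Lemma \ref{lemm:1}) are needed.
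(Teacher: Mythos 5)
Your proof is correct and follows essentially the same route as the paper's one-line proof ("combine Lemma \ref{th:Lu} with Lemma \ref{lemm:1}"). You rightly notice that Lemma \ref{th:Lu} as literally stated produces $(\avert{3Q}|u|^2)^{1/2}$ on the right, whereas one needs $\sup_{\mu Q}|u|$ with $\mu\le 2$, so the dilation factors in the cutoff and the Caccioppoli/Hardy--Littlewood--Sobolev steps must be tightened before applying Lemma \ref{lemm:1} together with the mean value property of the subharmonic function $|u|^{2}$; this fills in the bookkeeping that the paper leaves implicit, and the bookkeeping (powers of $(1+R^{2}\avert{Q}V)$, choice of intermediate dilations $\mu_1<\mu_2<\mu_3<\mu$) is done correctly.
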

\begin{proof}It suffices to combine Lemma \ref{th:Lu} with Lemma \ref{lemm:1}. 
\end{proof}

\subsubsection{Proof of Proposition \ref{lemm:5}  }

\begin{proof}
% First note that if $q\le \frac{2n}{n+2}$ then $\tilde q\le 2$ et the conclusion (useless for us) follows by a mere H\"older inequality. Henceforth, we assume 
 We assume $q>\frac{2n}{n+2}$.
 
  % Also, by Lemma \ref{lem:1}, it suffices to obtain the estimate with $k=0$. Let us assume $\mu=2$ for simplicity of the argument.
Let $v$ be a weak solution of $H(\textbf{a},0)v=0$ in $ 2 Q$ with $v=u$ on $\partial( 2 Q)$ and  set $w=u-v$ on $ 2 Q$. Since $w=0$ on $\partial( 2 Q)$, we have 
$$
 (\avert{ 2 Q}   |L w|^2 \big)^{1/2}  \le 
(\avert{ 2 Q}   |L u|^2 \big)^{1/2} .
$$
By estimates of Lemma \ref{th:RH1}, we have for all $2\le p \le \infty$ and in particular for $p=\tilde q$,  
$$
\big ( \avert{Q} |L v|^p \big)^{1/p} \le C (\avert{ \frac{3}{2} Q}   |L v|^2 +\avert{ \frac{3}{2} Q} |m(.,|B|)\,v|^{2} \big)^{1/2}.$$ 
The subharmonicity of $|v|^{2}$ and $|u|^{2}$ implies 
$$\avert{\frac{3}{2}Q}|v|^{2}\leq \sup_{2Q} |v|^{2}=\sup_{\partial(2Q)}|v|^{2}=\sup_{\partial (2Q)}|u|^{2}\leq C\avert{3Q} |u|^{2}.$$
Hence
 \begin{align*}\big{(} \avert{\frac{3}{2} Q}| m(x,|B|) v(x) |^{2}dx \big{)}^{1/2}\leq & \{1+ R m(x_{0},|B|)\}^{k_{0} }m(x_{0},|B|)\big{(} \avert{\frac{3}{2}Q}|  v|^{2} \big{)}^{1/2}
\\
&\leq \frac{C_{k} \{1+ R m(x_{0},|B|)\}^{k_{0}}m(x_{0},|B|)}{\{1+ R m(x_{0},|B|)\}^{k}}\big{(} \avert{3Q}|u |^{2} \big{)}^{1/2}
\\
&\leq \{1+ R m(x_{0},|B|)\}^{k_{0}-k+(k_{0}/k_{0}+1)}\frac{C_{k} m(x_{0},|B|)}{\{1+ R m(x_{0},|B|)\}^{k_{0}/k_{0}+1}}\big{(} \avert{3Q}|  u |^{2} \big{)}^{1/2}
\\
 &\leq  C\big{(} \avert{3Q}|m(.,|B|)  u |^{2} \big{)}^{1/2}.
\end{align*}
 Where we used Lemma \ref{th:mprop} and Lemma \ref{lemm:1} for an arbitrary $k$.
It follows $$
\big ( \avert{Q} |L v|^p \big)^{1/p} \ \le C (\avert{ 3 Q}   |L u|^2 +\avert{3Q} |m(.,|B|)\,u|^{2} \big)^{1/2} .$$ 

Let $1<\mu <2$ and  $\eta$ be a smooth non-negative function, bounded by 1, equal to 1 on $ Q$ with support contained in $\mu Q$ and whose gradient is bounded by $ C/R$ and Laplacian by $C/R^2$.
As $H(\textbf{a},0) w= H(\textbf{a},0) u = - V u $ on $2Q$, we have
 $$ H(\textbf{a},0)(w \eta)= \frac{2}{i} Lw .\nabla \eta- w \Delta \eta  - V u \eta.$$
Hence
$$
 L(w\eta)(x) = \int_{\mathbb{R}^n} L^{x}\Gamma_{0}(x,y) \big[\frac{2}{i} L(w)(y) .\nabla \eta(y)- w(y) \Delta \eta(y)  - (V u \eta)(y)\big]\, dy$$$$= I+II+III,
$$
with $\Gamma_{0}$ the kernel of $H(\textbf{a},0)^{-1}$. We know by \eqref{FS}, $|L^{x}\Gamma_{0}(x,y) | \le C |x-y|^{1-n}$.

Since $\tilde q \le q^*$, then
$$
\big( \avert{Q} |L w|^{\tilde q} \big)^{1/\tilde q } \le \big( \avert{Q} |L w|^{ q^*} \big)^{1/ q^* }.
$$

%so that it suffices to bound the latter integral.
Using support conditions on $\eta$, we obtain the following estimates for all $x\in Q$,
$$
|I|\le  C \big( \avert{ 2 Q} |L w|^2 \big)^{1/2} \le C \big( \avert{ 2 Q} |L u|^2 \big)^{1/2}
$$
and
$$
|II| \le \frac C R \avert{ 2 Q} | w| \le C \big( \avert{ 2 Q} |\nabla |w||^2 \big)^{1/2} \le C \big( \avert{ 2 Q} |Lw|^2 \big)^{1/2} \le C \big( \avert{ 2 Q} |L u|^2 \big)^{1/2},
 $$
 Above we used the Poincar\'e and the diamagnetic inequality \eqref{2diam} \footnote{We consider the function $\tilde{w}$ defined as $\left\{\begin{array}{ll}
\tilde{ w}=w,\,\, \textrm{sur}\,\, 2Q\\\tilde{w}=0, \,\, \textrm{sur}\,\, \mathbb{R}^{n}\setminus{2Q}
  \end{array}
  \right.$. Then $L(\tilde{w})=\mathbf{1}_{2Q} L(w)$ as $w$ vanishes on  $\partial 2Q$. }
 
 It follows by Hardy-Littlewood-Sobolev inequality,% $w=0$ on the  boundary. 
%By the $L^q-L^{q^*}$ boundedness of the Riesz potential
$$
\bigg(\int_{\mathbb{R}^n} III^{q^*}\bigg)^{1/q^*} \le C\bigg( \int_{
\mathbb{R}^n} |Vu\eta|^q \bigg)^{1/q}  \le C\bigg( \int_{
\mu Q} |V|^q \bigg)^{1/q} \sup_{\mu Q } |u|.
$$
%Normalizing by taking averages et using the $B_{q}$ condition on $V$ yields
Since $V\in RH_{q}$, then
\begin{equation}
\label{eq:I}
\big (\avert{Q} III^{q^*}\big)^{1/q^*} \le C R \, \avert{\mu Q} V \sup_{\mu Q } |u|.
\end{equation}
Now, if $\mu < \mu' < 2$, subharmonicity of $|u|^{2}$ and Lemma \ref{2th:sousharm} yield
$$
R\, \avert{\mu Q} V \sup_{\mu Q}|u| \le  C R\,  \avert{\mu' Q} V  \,  \big(\avert{\mu' Q} |u|^2 \big)^{1/2},
$$
 which by Lemma \ref{lemm:2} is bounded by 
 $
C  \big(  \avert{2Q} (V|u|^2)\big)^{1/2}
$.
Gathering the estimates obtained for $L v$ and $L w$, the lemma is proved.
\end{proof}

\subsubsection{Proof of Proposition \ref{lemm:6}  }

\begin{proof}  Assume $q>n/2$ (it includes $q=\frac{n}{2}$ via the self-improvement of reverse H\"older classes). The previous lemma shows that $ \avert{\mu' Q} |L u|^{ \tilde q}  <\infty$ for all $1<\mu'\le \mu$. As $\tilde q =2q>n$,  Lemma \ref{lemm:3} applies and using it for $k=0$ instead of Lemma \ref{lemm:2} in the argument of Lemma  \ref{lemm:5}, we obtain,  
$$
\big( \avert{Q} |L w|^{ q^*} \big)^{1/ q^* } \le C \big( \avert{\mu Q} |L u|^{ 2q} \big)^{1/ 2q }.
$$
Next, we know that $$
\big( \avert{Q} |L v|^{ q^*} \big)^{1/ q^* } \le C \big( \avert{\mu Q} |L u|^{ 2q}+|m(.,|B|)u|^{2q} \big)^{1/ 2q }.$$
Hence 
$$
\big( \avert{Q} |L u|^{ q^*} \big)^{1/ q^* } \le C \big( \avert{\mu Q} |L u|^{ 2q}+|m(.,|B|)u|^{2q} \big)^{1/ 2q }.
$$
\end{proof}

\subsection{Maximal inequalities}
 \textit{Proof of Theorem \ref{2th:VH}:}\\
 	
The proof of this theorem is identical to that of Theorem 1.1 in \cite{AB}. First we prove an $ L ^ 1$ inequality, then we establish some reverse H\"older type estimates, then finally we apply Theorem \ref{2theor:shen}.

\begin{lemm}\label{lemm:l1}Let $f \in L^{\infty}_{comp}(\mathbb{R}^n)$ and $u=H(\textbf{a},V)^{-1}f$. Then, 
\begin{equation}\label{2Vu}
\int_{\mathbb{R}^n} V|u| \le \int_{\mathbb{R}^n} |f|, 
\end{equation}
and
\begin{equation}\label{2Hu}
\int_{\mathbb{R}^n} |H(\textbf{a},0)u| \le 2 \int_{\mathbb{R}^n} |f|.
\end{equation}
 
\end{lemm}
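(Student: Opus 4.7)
The plan is to prove \eqref{2Vu} first via a resolvent regularization combined with the magnetic Kato inequality, and then to obtain \eqref{2Hu} as a direct consequence of \eqref{2Vu} and the identity $H(\mathbf{a},0)u = f - Vu$. For $\lambda>0$, I introduce the approximant $u_\lambda = (H(\mathbf{a},V)+\lambda)^{-1}f \in \mathcal{D}(H)$. The Kato-Simon inequality \eqref{eq:KS} gives $|u_\lambda|\le (-\Delta+\lambda)^{-1}|f|$, and since the convolution kernel of $(-\Delta+\lambda)^{-1}$ has total mass $1/\lambda$, this yields the crucial uniform bound $u_\lambda\in L^1(\mathbb{R}^n)$ with $\|u_\lambda\|_{L^1}\le \lambda^{-1}\|f\|_{L^1}$.

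Next, I invoke the magnetic Kato inequality
\[
-\Delta|u_\lambda| \,\le\, \mathrm{Re}(\mathrm{sgn}(\bar u_\lambda)\,H(\mathbf{a},0)u_\lambda) \quad\text{in } \mathcal{D}'(\mathbb{R}^n),
\]
and substitute $H(\mathbf{a},0)u_\lambda = f - V u_\lambda - \lambda u_\lambda$ to arrive at $(-\Delta + V + \lambda)|u_\lambda|\le |f|$ in $\mathcal{D}'(\mathbb{R}^n)$. Testing this distributional inequality against a standard cutoff $\phi_R\in C_c^\infty(\mathbb{R}^n)$ with $\phi_R\equiv 1$ on $B(0,R)$, $0\le\phi_R\le 1$, and $|\Delta\phi_R|\le C/R^2$, the $L^1$ bound on $u_\lambda$ forces
\[
\int |u_\lambda|\,|\Delta\phi_R|\,dx \,\le\, \frac{C}{R^2}\,\|u_\lambda\|_{L^1}\,\longrightarrow\, 0 \quad\text{as } R\to\infty,
\]
so in the limit $\int (V+\lambda)|u_\lambda| \le \int|f|$, and in particular $\int V|u_\lambda|\le \int|f|$ uniformly in $\lambda>0$. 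Passing $\lambda\to 0^+$ along a subsequence on which $u_\lambda\to u$ a.e., Fatou's lemma then yields \eqref{2Vu}.

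For \eqref{2Hu}, the equation $H(\mathbf{a},V)u=f$ rewrites as $H(\mathbf{a},0)u = f-Vu$ in the sense of distributions; the triangle inequality combined with \eqref{2Vu} then gives $\|H(\mathbf{a},0)u\|_{L^1}\le \|f\|_{L^1}+\|Vu\|_{L^1}\le 2\|f\|_{L^1}$. The main obstacle I foresee is the rigorous justification of the magnetic Kato inequality at the regularity of $u_\lambda\in\mathcal{V}$: one smooths $|u_\lambda|$ by $(|u_\lambda|^2+\varepsilon)^{1/2}$, computes the Laplacian using the diamagnetic identity together with $L_k u_\lambda\in L^2$, and passes $\varepsilon\to 0$. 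Once that is in place, the cutoff computation and the limit $\lambda\to 0^+$ are essentially routine, with the $L^1$ control coming from Kato-Simon being precisely what eliminates the boundary term at infinity.
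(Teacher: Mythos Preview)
Your argument is correct, but it takes a different route from the paper. The paper's proof is a two-line reduction: it applies the Kato--Simon (diamagnetic) domination in the form $|H(\mathbf{a},V)^{-1}f|\le (-\Delta+V)^{-1}|f|$, then multiplies by $V$ and integrates, invoking the already-established non-magnetic estimate $\int V(-\Delta+V)^{-1}|f|\le\int|f|$ from \cite{AB}. Inequality \eqref{2Hu} is then obtained by difference, exactly as you do.

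Your approach instead works directly in the magnetic setting: you regularize with $u_\lambda=(H(\mathbf{a},V)+\lambda)^{-1}f$, apply the distributional magnetic Kato inequality to get $(-\Delta+V+\lambda)|u_\lambda|\le|f|$, and then test against cutoffs, using the $L^1$ bound $\|u_\lambda\|_{L^1}\le\lambda^{-1}\|f\|_{L^1}$ (from Kato--Simon) to kill the boundary term. This is more self-contained---you are effectively reproving the \cite{AB} lemma rather than citing it---at the cost of carrying out the cutoff and $\lambda\to 0$ limits explicitly. The paper's version is shorter precisely because it offloads that work to the non-magnetic reference; your version would stand on its own without \cite{AB}. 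Both are sound, and your handling of the regularity issue (smoothing $|u_\lambda|$ via $(|u_\lambda|^2+\varepsilon)^{1/2}$ to justify Kato's inequality for $u_\lambda\in\mathcal{V}$) is the standard and correct way to make the distributional step rigorous.
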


\begin{proof}
 $V\geq 0$, by Kato-Simon inequality \eqref{eq:KS}, we have
 $$|H(\textbf{a},V)^{-1}f|\leq  H(0,V)^{-1}|f|.$$
 We know, by \cite{AB} that
 $$\int_{\mathbb{R}^n} VH(0,V)^{-1}|f| \le \int_{\mathbb{R}^n} |f|.$$ Thus, inequality
 \eqref{2Vu} holds, and inequality \eqref{2Hu} follows by difference.
\end{proof}

\textit{ Proof of the $L^p$ maximal inequality:} 
Assume $V\in RH_q$ with $q> 1$.	
$V H (\textbf{a}, V) ^{-1} $. We know that this operator is bounded on $ L ^ 1 (\mathbb{R}^ n) $, so we apply Theorem \ref{2theor:shen} through the reverse H\"older inequality verified by any weak solution.
Set $Q$ a fixed cube and $f\in L^\infty(\mathbb{R}^n)$ a function with compact support in $\mathbb{R}^n \setminus 4Q$. Then $u=H(\textbf{a},V)^{-1} f$ is well defined in  $ \dot{\mathcal{V}}$ and it is a weak solution of $H(\textbf{a},0) u + Vu=0$ in $4Q$.

Since $|u|^2$ is subharmonic, by Lemma \ref{2th:sousharm} with $w=V$, $f=|u|^2$ and $s=1/2$, we obtain
$$\big{(}\avert{Q} |Vu|^{q}\big{)}^{1/q} \leq C \avert{2Q} |Vu|.$$
Thus \eqref{2T:shen} holds with $T=V H(\textbf{a},V)^{-1}$,  $p_{0}=1$, 
$q_{0}=q$, $S=0$, $\alpha_{1}=2$ and $\alpha_{2}=4$. Hence $V H(\textbf{a},V)^{-1}$ is bounded on $L^p(\mathbb{R}^n)$ for all $1<p<q$ by Theorem \ref{2theor:shen}. Due to the properties of $RH_{q}$ weights, we can replace $q$ by $q+\epsilon$. Taking the difference, we obtain the same result for $H(\textbf{a},0) H(\textbf{a},V)^{-1}$.
This completes the proof of Theorem \ref{2th:VH} .$\square$
%Notons qu'on peut prouver ce dernier par une autre méthode basée sur l'application du Theorem \ref{theor:shen} tout en  utilisant le résultat \ref{th:L2}.
 \begin{rem} Theorem \ref{Maxbes} is a consequence of Theorem \ref{Maxshen} and \ref{2th:VH}: $$L_{s}L_{k}H(\text{a},V)^{-1}=L_{s}L_{k}H(\text{a},0)^{-1}H(\text{a},0)H(\text{a},V)^{-1}.$$
 \end{rem}

     \subsection{Proof of Theorem \ref{th:2a}}
	
Using Theorem \ref{th:1a} and the corollary \ref{2th:VH2}, we can establish a first result:
 \begin{theo}\label{th:Lm}Under the assumptions of Theorem \ref{th:2a}, there exists an $ \epsilon >0$ such that $L\, H(\textbf{a},V)^{-1/2}$ is $L^p$ bounded for all $1 <p
< 2q+\epsilon$, where $\epsilon$  depends only on $V$.
\end{theo}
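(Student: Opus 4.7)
The plan is to obtain Theorem \ref{th:Lm} as a near-immediate consequence of the two ingredients already established: Theorem \ref{th:1a} (which gives the $L^p$ boundedness of $LH(\textbf{a},0)^{-1/2}$ for the full range $1<p<\infty$) and Corollary \ref{2th:VH2} (which gives the $L^p$ boundedness of $H(\textbf{a},0)^{1/2}H(\textbf{a},V)^{-1/2}$ for $1<p<2q+\epsilon$). The key identity is the operator factorization
\begin{equation*}
LH(\textbf{a},V)^{-1/2} \;=\; \bigl(LH(\textbf{a},0)^{-1/2}\bigr) \circ \bigl(H(\textbf{a},0)^{1/2}H(\textbf{a},V)^{-1/2}\bigr).
\end{equation*}

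First I would justify this factorization at the $L^2$ level. Both $H(\textbf{a},0)$ and $H(\textbf{a},V)$ are nonnegative self-adjoint operators on $L^2$, so the fractional powers $H(\textbf{a},V)^{-1/2}$ and $H(\textbf{a},0)^{1/2}$ are defined by functional calculus, and $H(\textbf{a},0)^{1/2}H(\textbf{a},V)^{-1/2}$ is bounded on $L^2$ by the identity \eqref{LL2} (its $L^2$ norm is controlled by the fact that the quadratic form of $H(\textbf{a},0)$ is dominated by that of $H(\textbf{a},V)$). In particular, for $f\in C_0^\infty(\mathbb{R}^n)$ the element $g:=H(\textbf{a},0)^{1/2}H(\textbf{a},V)^{-1/2}f$ lies in $L^2$, and applying $LH(\textbf{a},0)^{-1/2}$ to $g$ yields precisely $LH(\textbf{a},V)^{-1/2}f$.

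Next I would chain the two $L^p$ bounds. Fix $1<p<2q+\epsilon$, where $\epsilon>0$ is the constant provided by Corollary \ref{2th:VH2}. For $f\in C_0^\infty(\mathbb{R}^n)$, Corollary \ref{2th:VH2} yields
\begin{equation*}
\|g\|_p \;=\; \|H(\textbf{a},0)^{1/2}H(\textbf{a},V)^{-1/2}f\|_p \;\le\; C_p\|f\|_p ,
\end{equation*}
and since $g\in L^p\cap L^2$, Theorem \ref{th:1a} applied to $g$ (after a standard density argument using $C_0^\infty$) gives
\begin{equation*}
\|LH(\textbf{a},V)^{-1/2}f\|_p \;=\; \|LH(\textbf{a},0)^{-1/2}g\|_p \;\le\; C_p'\|g\|_p \;\le\; C_p''\|f\|_p .
\end{equation*}

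The only genuinely delicate point is the density/approximation step needed to legitimately compose the two bounded operators on $C_0^\infty$ in the range $p>2$: one must verify that $g$ can be suitably approximated by $C_0^\infty$ functions so that Theorem \ref{th:1a} applies in $L^p$, or alternatively observe that $LH(\textbf{a},0)^{-1/2}$ extends uniquely from $C_0^\infty$ to a bounded operator on $L^p$ and that this extension coincides with the functional-calculus composition on $L^p\cap L^2$. Both verifications are routine given the Gaussian heat kernel bounds and the semigroup density arguments already invoked in the proof of Theorem \ref{th:2b}; no new estimate beyond Theorem \ref{th:1a} and Corollary \ref{2th:VH2} is required.
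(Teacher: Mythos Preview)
Your proposal is correct and follows exactly the same approach as the paper: the paper's entire proof consists of the single factorization $LH(\textbf{a},V)^{-1/2}=LH(\textbf{a},0)^{-1/2}\,H(\textbf{a},0)^{1/2}H(\textbf{a},V)^{-1/2}$, relying on Theorem \ref{th:1a} and Corollary \ref{2th:VH2}. Your write-up is in fact more careful than the paper's, since you also discuss the $L^2$ justification of the composition and the density issue, which the paper leaves implicit.
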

\begin{proof} ? $$LH(\textbf{a},V)^{-1/2}=LH(\textbf{a},0)^{-1/2}H(\textbf{a},0)^{1/2}H(\textbf{a},V)^{-1/2}.$$ 
\end{proof} 
\begin{rem}\label{m2q}Using the same argument, we obtain that $m(.,|B|)\,H(\textbf{a},V)^{-1/2}$ is $L^p$ bounded for $1\leq p<2q+\epsilon$. 
\end{rem}
  	
Now, we have to controll the term $ m (. | B |) u $ appearing in the previous estimates. It suffices to study the $L^p$ boundedness of operator $m(.,|B|)\,H(\textbf{a},V)^{-1/2}$.	
The result of the remark \ref{m2q} is not enough, we will improve it through the following theorem:
   \begin{theo}\label{th:mmh}Let $\textbf{a}\in L^{2}_{loc}(\mathbb{R}^{n})^{n}$, $V\in RH_{q}$, $1<q\leq +\infty$ and we assume \eqref{eq:shen}.
  Then, for all $1\leq p \leq \infty$, there is a constant $C_p$, such that 
 \begin{equation}\| m(.,|B|)^{2}H(\textbf{a},V)^{-1}(f)\|_{p}\leq C \| f\|_{p}
 \end{equation}
 for all $f\in C_{0}^{\infty}(\mathbb{R}^{n}).$

 \end{theo}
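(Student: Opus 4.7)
The plan is to reduce the $L^p$ boundedness of $T:=m(.,|B|)^2 H(\textbf{a},V)^{-1}$ to a pointwise estimate on the Green function $\Gamma_V$ of $H(\textbf{a},V)$, then close the argument by Schur's test at the two endpoints $p=1,\infty$ and interpolate in between. The guiding observation is that the proof of the analogous statement for $V=0$ in \cite{Sh4} uses the potential only through the Caccioppoli inequality and through the subharmonicity of $|u|^2$, both of which are preserved (in fact reinforced) by a non-negative $V$. Consequently the kernel bound persists, expressed entirely in terms of $m(.,|B|)$, without ever requiring control of $V$ at the magnetic length scale.

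The first step is to establish the pointwise estimate
$$|\Gamma_V(x,y)|\le \frac{C_k}{\{1+|x-y|\,m(x,|B|)\}^k}\,\frac{1}{|x-y|^{n-2}},\qquad x\neq y,$$
for every $k>0$. Fix $y$; the function $u=\Gamma_V(\cdot,y)$ is a weak solution of $H(\textbf{a},V)u=0$ away from $y$, so by Lemma \ref{2subh} we have $\Delta |u|^2 = 2|Lu|^2+2V|u|^2\ge 2|Lu|^2\ge 0$. The Caccioppoli inequality yields $\int_Q(|Lu|^2+V|u|^2)\le (C/R^2)\int_{2Q}|u|^2$; discarding the non-negative $V|u|^2$ term gives $\int_Q|Lu|^2\le (C/R^2)\int_{2Q}|u|^2$, identical to the $V=0$ case. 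Inserting this into the improved Fefferman--Phong inequality of Lemma \ref{FP} for the $A_\infty$-weight $|B|$ produces a one-scale gain
$$\avert{Q(x_0,R)}|u|^2\le \frac{C}{\{1+R\,m(x_0,|B|)\}^{2\beta}}\avert{Q(x_0,2R)}|u|^2,$$
which iterates outward from the magnetic scale $R\sim m(x_0,|B|)^{-1}$ and, combined with the near-diagonal $L^2$ control $\int_{Q(y,R)}|\Gamma_V(\cdot,y)|^2\le CR^{4-n}$, delivers the claimed pointwise bound exactly as in Shen's proof of Lemma~1.11 of \cite{Sh4}.

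With the kernel estimate at hand, write $K(x,y)=m(x,|B|)^2\Gamma_V(x,y)$, the integral kernel of $T$. Fix $x$ and substitute $z=m(x,|B|)(x-y)$: for $k>2$,
$$\int_{\mathbb{R}^n}|K(x,y)|\,dy\le C_k\int_{\mathbb{R}^n}\frac{|z|^{2-n}}{(1+|z|)^k}\,dz<\infty,$$
which gives the $L^\infty\to L^\infty$ bound. For the $L^1\to L^1$ bound, fix $y$ and use items (3)--(4) of Lemma \ref{th:mprop} to trade $m(x,|B|)$ for $m(y,|B|)$ at the cost of a polynomial factor of the form $(1+|x-y|\,m(y,|B|))^{k_0}$; choosing $k$ large enough to absorb this correction, the same change of variables gives $\int|K(x,y)|\,dx$ bounded uniformly in $y$. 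Schur's test then supplies the two endpoints, and Riesz--Thorin interpolation covers every $1\le p\le\infty$.

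The principal obstacle is Step~1: the decay factor must involve $m(.,|B|)$ alone, since the weak hypothesis $V\in RH_q$ with $q>1$ gives no \emph{a priori} information about $V$ at the scale $m(.,|B|)^{-1}$, so $V$ cannot be used to manufacture decay and must be handled as harmless slack. What makes this work is that the iteration rests only on the subharmonicity of $|u|^2$ (to which $V|u|^2$ contributes non-negatively) and on the Caccioppoli bound for $|Lu|^2$ (from which $V|u|^2$ is simply discarded); the improved Fefferman--Phong inequality and the estimates on $m(.,|B|)$ from Lemma \ref{th:mprop} depend only on $|B|$. Thus $V$ plays no active role in the proof of the kernel estimate, and the proof essentially reproduces Shen's argument under trivial modifications.
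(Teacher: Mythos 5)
Your strategy is genuinely different from the paper's. The paper factorizes $T=m(\cdot,|B|)^{2}H(\textbf{a},V)^{-1}=\big(m(\cdot,|B|)^{2}H(\textbf{a},0)^{-1}\big)\big(H(\textbf{a},0)H(\textbf{a},V)^{-1}\big)$, deduces $L^{2}$ boundedness from Theorem \ref{2th:VH} and the known $L^{p}$ bounds for $m(\cdot,|B|)^{2}H(\textbf{a},0)^{-1}$, and then bootstraps to $p>2$ via Theorem \ref{2theor:shen} and the reverse H\"older estimate of Proposition \ref{pro:muv}. You go for a pointwise Green-function bound followed by Schur's test and Riesz--Thorin; that is cleaner if you can prove the kernel estimate, and it handles $p=1,\infty$ directly.

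However, the derivation of the ``one-scale gain'' in your Step~1 does not close as written. Lemma \ref{FP} with $\omega=|B|$, $p=2$ gives
$$\int_{Q}|Lu|^{2}+|B|\,|u|^{2}\ \ge\ \frac{C\,m_{\beta}\!\big(R^{2}\avert{Q}|B|\big)}{R^{2}}\int_{Q}|u|^{2},$$
and Caccioppoli controls only $\int_{Q}|Lu|^{2}\le (C/R^{2})\int_{2Q}|u|^{2}$. The stray term $\int_{Q}|B|\,|u|^{2}$ is not controlled: $|B|\in RH_{n/2}$ does not bound $|B|$ pointwise by its average, and even the crude estimate $\int_{Q}|B|\,|u|^{2}\le \sup_{Q}|u|^{2}\cdot |Q|\avert{Q}|B|$, after multiplying by the prefactor $R^{2}/(C\,m_{\beta}(R^{2}\avert{Q}|B|))$, yields a quantity of order $|Q|\,(R^{2}\avert{Q}|B|)^{1-\beta}\sup_{Q}|u|^{2}\gtrsim\int_{2Q}|u|^{2}$ once $R^{2}\avert{Q}|B|\ge 1$ --- no gain. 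The correct tool is the magnetic Fefferman--Phong inequality $\|m(\cdot,|B|)u\|_{2}\le C\|Lu\|_{2}$ (the unnamed lemma following Lemma \ref{FP} in Section 2), which, applied to a cutoff $u\phi$ together with Caccioppoli and then item (4) of Lemma \ref{th:mprop}, is exactly how the paper proves Lemma \ref{lemm:solV}. That inequality relies essentially on the hypothesis $|\nabla B|\le c\,m(\cdot,|B|)^{3}$ from \eqref{eq:shen}; your closing claim that only ``the improved Fefferman--Phong inequality and the estimates on $m(\cdot,|B|)$'' are needed misidentifies the decisive ingredient. Replace the appeal to Lemma \ref{FP} by an appeal to Lemma \ref{lemm:solV} (or to the magnetic Fefferman--Phong lemma together with Caccioppoli), and the pointwise kernel estimate and the subsequent Schur's test and interpolation are sound.
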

By complex interpolation, we obtain
 \begin{cor}\label{cormu} Suppose $\textbf{a}\in L^{2}_{loc}(\mathbb{R}^{n})^{n}$ and $V\in RH_{q}$, $1<q\leq +\infty$. We also assume \eqref{eq:shen}.
  Then, for all $1\leq p < \infty$, there is a constant $C_p$, such that 
 \begin{equation}\| m(.,|B|)\,H(\textbf{a},V)^{-1/2}(f)\|_{p}\leq C \| f\|_{p},
 \end{equation}
 for all $f\in C_{0}^{\infty}(\mathbb{R}^{n}).$

 \end{cor}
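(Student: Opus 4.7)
The plan is to apply Stein's complex interpolation theorem to the analytic family
\begin{equation*}
T_z \;=\; m(\cdot,|B|)^{2z}\, H(\textbf{a},V)^{-z}, \qquad 0\le \mathrm{Re}(z)\le 1,
\end{equation*}
and evaluate at $z=1/2$, which is precisely the operator $m(\cdot,|B|)\,H(\textbf{a},V)^{-1/2}$ we want to bound. This follows the same template used in \cite{AB}, the only new input being Theorem \ref{th:mmh} as the right endpoint.

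On the line $\mathrm{Re}(z)=0$, write $z=iy$ and note that $m(\cdot,|B|)^{2iy}=e^{2iy\log m(\cdot,|B|)}$ has modulus one, so multiplication by it is an isometry on every $L^p$. The factor $H(\textbf{a},V)^{-iy}$ is bounded on $L^p$ for $1<p<\infty$ with norm of polynomial growth in $|y|$, via Hebisch's theorem \cite{H} applied to the Gaussian heat kernel bounds for $e^{-tH(\textbf{a},V)}$ that the paper has already invoked. On the line $\mathrm{Re}(z)=1$, decompose
\begin{equation*}
T_{1+iy}\;=\;m(\cdot,|B|)^{2iy}\cdot\bigl(m(\cdot,|B|)^{2}\,H(\textbf{a},V)^{-1}\bigr)\cdot H(\textbf{a},V)^{-iy}.
\end{equation*}
The middle factor is bounded on $L^p$ for all $1\le p\le\infty$ by Theorem \ref{th:mmh}, while the outer factors are handled as above, again with polynomial norm growth in $|y|$.

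Having checked both boundary estimates with admissible ($e^{a|y|}$ with $a<\pi$) norm growth and having verified that $T_z f$ depends analytically on $z$ for $f$ in a dense class (spectral calculus for $H(\textbf{a},V)$ on $L^2$, where $m(\cdot,|B|)$ is a locally bounded multiplier, provides this on a suitable test class), Stein's theorem delivers the $L^p$ boundedness of $T_{1/2}=m(\cdot,|B|)\,H(\textbf{a},V)^{-1/2}$ for every $1<p<\infty$.

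The only delicate point is the endpoint $p=1$ asserted in the statement. Complex interpolation gives only the open range, so $p=1$ must be addressed separately; the natural route is a weak-type $(1,1)$ estimate for $m(\cdot,|B|)^2 H(\textbf{a},V)^{-1}$ propagated through a Calder\'on--Zygmund decomposition of the type used in Lemma \ref{th:CZ1}, or, more directly, by combining the Kato--Simon inequality \eqref{eq:KS} with the corresponding $L^1$ bound for the electric-free case $\textbf{a}=0$. This endpoint argument, rather than the interpolation itself, is the main obstacle; the interpolation step is otherwise routine once Theorem \ref{th:mmh} and the $L^p$ boundedness of the imaginary powers of $H(\textbf{a},V)$ are in place.
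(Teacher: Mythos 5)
Your complex-interpolation argument is exactly the paper's intended route: the paper writes ``By complex interpolation, we obtain'' Corollary \ref{cormu} immediately after Theorem \ref{th:mmh}, and earlier (in the discussion of Theorem \ref{th:mH}) explains the template you follow --- interpolate an analytic family between the identity and $m(\cdot,|B|)^2 H(\textbf{a},V)^{-1}$, using Hebisch's $L^p$ boundedness of the imaginary powers $H(\textbf{a},V)^{iy}$ for $1<p<\infty$. Your factorisation of $T_{1+iy}$ and the observation that $m(\cdot,|B|)^{2iy}$ is unimodular are the right points, and the interpolation delivers $L^p$ boundedness of $m(\cdot,|B|)\,H(\textbf{a},V)^{-1/2}$ for every $1<p<\infty$.

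You are right that this leaves $p=1$ untouched, but neither of your two proposed fixes works. The Kato--Simon inequality \eqref{eq:KS} dominates $|H(\textbf{a},V)^{-1}f|$ pointwise by $(-\Delta+V)^{-1}|f|$, which discards the magnetic potential entirely --- yet the weight $m(\cdot,|B|)$ is built from the magnetic field and has no meaning when $\textbf{a}=0$, so there is no ``electric-free $L^1$ bound'' to transfer; indeed what Kato--Simon would give you after multiplying by the weight is $m(\cdot,|B|)^2(-\Delta+V)^{-1}$, which is not an $L^1$-bounded operator for a growing weight $m$. As for the Calder\'on--Zygmund route: Lemma \ref{th:CZ1} is built to prove the reverse inequality of Theorem \ref{th:1b}, not to bound forward operators, and even a weak-$(1,1)$ bound for $m(\cdot,|B|)^2 H(\textbf{a},V)^{-1}$ would at best interpolate at the half-power to a weak-$(1,1)$ estimate, not the strong $L^1$ bound the displayed inequality asserts. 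In fact the ``$1\le p$'' in the statement of the corollary appears to be a slip: the parallel interpolated statements in the paper, Theorem \ref{th:mH} (the case $V=0$) and Corollary \ref{2th:VH2}, are both given only for $1<p$. The interpolation you carried out is the complete proof; the range should read $1<p<\infty$, and $p=1$ should not be treated as a missing lemma.
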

 We will apply Theorem \ref{2theor:shen} to prove Theorem \ref{th:mmh} for $p>2$ and we will need the following lemma:
 \begin{lemm}\label{lemm:solV} Under assumptions of Theorem \ref{th:mmh}, let $u$ be a weak solution of $H(\textbf{a},V)u=0$ in $4Q$ centered at $x_{0} \in \mathbb{R}^{n}$ and of sidelength $4R$. Then, for any integer $k>0$, there exists a constant $C_{k}$ such that 
 \begin{equation}\label{eq:solV}|u(x_{0})| \leq \frac{C_{k}}{\{1+
Rm(x_{0},|B|)\}^{k} }\big(\avert{3Q} |u|^{2}\big)^{1/2}. 
\end{equation} 
  
  \end{lemm}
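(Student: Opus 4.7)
The plan is to mimic the proof of the pure magnetic case (Lemma \ref{lemm:sol}, which is Shen's Lemma 1.11) and observe that adding a nonnegative electric potential $V$ only strengthens the coercivity used in the argument, so the same decay estimate survives. Concretely, the three ingredients one needs are still available in our setting: (i) by Lemma \ref{2subh}, $|u|^2$ is subharmonic in $4Q$, so the weighted sub--mean value inequality of Lemma \ref{2th:sousharm} applies; (ii) the Caccioppoli inequality for $H(\textbf{a},V)$ with $V\ge 0$ gives, for cutoffs $\eta$ adapted to concentric subcubes $Q'\subset Q''\subset 3Q$,
\begin{equation*}
\int |Lu|^{2}\eta^{2}+V|u|^{2}\eta^{2}\le \frac{C}{\mathrm{dist}(Q',\partial Q'')^{2}}\int_{Q''}|u|^{2};
\end{equation*}
and (iii) the improved Fefferman--Phong inequality \eqref{1eq:FP} applied with $\omega=|B|\in A_\infty$ (which uses only $|B|$, not $V$) yields, for some $\beta\in(0,1)$ and every sub--cube $Q'=sQ$ of sidelength $r=sR$,
\begin{equation*}
\int_{Q'}|Lu|^{2}+|B||u|^{2}\ \ge\ \frac{C\,m_{\beta}\!\left(r^{2}\avert{Q'}|B|\right)}{r^{2}}\int_{Q'}|u|^{2}.
\end{equation*}

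Treat two regimes. When $Rm(x_0,|B|)\le 1$ the factor $\{1+Rm(x_0,|B|)\}^{-k}$ is comparable to~$1$, and the conclusion reduces to the sub--mean value bound $|u(x_0)|^{2}\le C\avert{3Q}|u|^{2}$, which is immediate from subharmonicity of $|u|^{2}$ and Lemma \ref{2th:sousharm}. When $Rm(x_0,|B|)>1$, one sets up a chain of concentric cubes $Q_j$ interpolating between a small cube of sidelength $\sim m(x_0,|B|)^{-1}$ around $x_0$ and $3Q$. At each scale where $r^{2}\avert{Q_j}|B|\ge 1$, the combination of the Caccioppoli inequality with \eqref{1eq:FP} gives a uniform geometric gain factor for $\int_{Q_j}|u|^{2}$ as one passes from $Q_{j+1}$ to $Q_{j}$; iterating over $\log\!\bigl(Rm(x_0,|B|)\bigr)$ such scales (using property~(2) of Lemma \ref{th:mprop} to keep $m(\,\cdot\,,|B|)\approx m(x_0,|B|)$ on the small scales, and property~(3) to bound how fast $m$ can change on the large ones) produces any prescribed polynomial decay $\{1+Rm(x_0,|B|)\}^{-k}$. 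Finally, a single application of the sub--mean value inequality at the innermost scale converts the $L^{2}$ average into the pointwise value $|u(x_0)|$.

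The main obstacle is the iteration/bookkeeping step: counting precisely how many scales of geometric gain are available between sidelength $R$ and sidelength $m(x_0,|B|)^{-1}$, and verifying that at each of those scales $r^{2}\avert{Q_j}|B|$ is bounded below (so that the factor $m_{\beta}(r^{2}\avert{Q_j}|B|)$ in \eqref{1eq:FP} is effective). This is handled exactly as in Shen's argument, using $|B|\in RH_{n/2}$ together with the doubling of $|B|\,dx$ and the monotonicity of $r\mapsto r^{2}\avert{Q(x_0,r)}|B|$ implicit in the definition of $m(\,\cdot\,,|B|)$. The presence of $V\ge 0$ contributes only the extra nonnegative term $V|u|^{2}\eta^{2}$ on the left of the Caccioppoli estimate, which can simply be discarded; thus no new analytic ingredient beyond those already used for Lemma \ref{lemm:sol} is required.
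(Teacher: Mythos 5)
Your overall plan (two regimes depending on $Rm(x_0,|B|)$, a multi-scale iteration of Caccioppoli and Fefferman--Phong estimates, and the observation that $V\ge 0$ only adds a nonnegative, discardable term to Caccioppoli) is the right strategy and matches the paper's. However, the specific Fefferman--Phong ingredient you select in (iii) does not produce the geometric gain you claim, and this is a genuine gap. The improved Fefferman--Phong inequality \eqref{1eq:FP} with $\omega=|B|$ reads
\begin{equation*}
\int_{Q'}|Lu|^{2}+|B||u|^{2}\ \ge\ \frac{C\,m_{\beta}\bigl(r^{2}\avert{Q'}|B|\bigr)}{r^{2}}\int_{Q'}|u|^{2},
\end{equation*}
so to extract decay for $\int_{Q'}|u|^2$ you must upper-bound the whole left-hand side, in particular $\int_{Q'}|B||u|^{2}$. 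But the Caccioppoli inequality for a weak solution of $H(\textbf{a},V)u=0$ controls only $\int_{Q'}|Lu|^{2}+V|u|^{2}$; it gives no information on $\int_{Q'}|B||u|^{2}$, and discarding $V|u|^{2}\ge 0$ does nothing to help on the $|B|$ side. The local estimate \eqref{1eq:FP} holds for an arbitrary $A_\infty$ weight $\omega$ and therefore cannot exploit the structural hypotheses \eqref{eq:shen} --- in particular $|\nabla B|\le c\,m(\cdot,|B|)^3$ --- which are exactly what is needed to absorb the $|B||u|^2$ contribution.

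The paper's proof sidesteps the issue by using a different Fefferman--Phong inequality, Shen's \emph{global} estimate \eqref{eq:mL}, $\|m(\cdot,|B|)f\|_{2}\le C\|Lf\|_{2}$, which is valid under \eqref{eq:shen} and has no $|B||u|^{2}$ term on the right. Apply it to $u\phi$, where $\phi$ is a cutoff equal to $1$ on $Q$ and supported in $\tfrac{3}{2}Q$; expand $L(u\phi)$ and use Caccioppoli to get $\int_{Q}m(\cdot,|B|)^{2}|u|^{2}\le CR^{-2}\int_{2Q}|u|^{2}$. Then item (4) of Lemma \ref{th:mprop} gives the pointwise lower bound $m(y,|B|)\gtrsim m(x_0,|B|)\{1+Rm(x_0,|B|)\}^{-k_0/(k_0+1)}$ for $y\in Q$, which converts the last display into a genuine decay $\int_Q|u|^2\le C\{1+Rm(x_0,|B|)\}^{-2/(k_0+1)}\int_{3Q}|u|^2$; iterating over scales (exactly the multi-scale bookkeeping you sketch) and using subharmonicity of $|u|^2$ for the final pointwise step yields \eqref{eq:solV} for arbitrary $k$. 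In short: keep your two-regime scheme and your treatment of $V$, but replace the local Fefferman--Phong \eqref{1eq:FP} with $\omega=|B|$ by the global estimate $\|m(\cdot,|B|)f\|_2\lesssim\|Lf\|_2$ applied to cutoffs.
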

  \begin{proof}
	
We will use the results obtained in the absence of electric potential $ V $. For $f\in C^{\infty}_{0}(\mathbb{R}^n)$,
  \begin{equation}\label{eq:mL}\|m(.,|B|)f\|_{2} \leq C\|H(\textbf{a},0)^{1/2}f\|_{2} \leq C\|Lf\|_{2}.
  \end{equation}
 Consider $\phi$ a smooth non-negative function, bounded by 1, equal to 1 on $ Q$ with support in $\frac{3}{2}\, Q$ and whose gradient is bounded by $ C/R$.
  
We apply inequality \eqref{eq:mL} to� $u\phi$ and we obtain
  $$\int_{\mathbb{R}^{n}} | m(.,|B|)\, u\phi|^{2} \leq C \int_{\mathbb{R}^{n}} |L(u\phi)|^{2}.$$
  This gives $$\int_{Q} | m(.,|B|)\, u|^{2} \leq C \int_{\frac{3}{2} \,Q} |\phi Lu|^{2}+ \int_{\frac{3}{2} \,Q} |u\nabla \phi|^{2}$$
  $$\int_{Q} | m(.,|B|)\, u|^{2} \leq C \int_{\frac{3}{2} \,Q} |L\,u|^{2}+ \frac{C}{R^{2}}\int_{\frac{3}{2} \,Q} |u|^{2} \leq \frac{C}{R^{2}}\int_{2Q} |u|^{2},$$
  where we used Caccioppoli type inequality.
 Now, Lemma \ref{th:mprop} yields
$$\int_{Q} |u|^{2}\leq \frac{C\{1+
Rm(x_{0},|B|)\}^{2k_{0}/(k_{0}+1)}}{\{Rm(x_{0},|B|)\}^{2} }\int_{3Q} |u|^{2}\leq \frac{C}{\{1+Rm(x_{0},|B|)\}^{2/(k_{0}+1)} }\int_{3Q} |u|^{2},$$
then
$$|u(x_{0})|\leq C\big(\avert{Q} |u|^{2}\big)^{1/2}  \leq \frac{C_k}{\{1+
Rm(x_{0},|B|)\}^{k/(k_{0}+1)}} \big(\avert{3Q} |u|^{2}\big)^{1/2} .$$
\end{proof}
 \begin{pro}\label{pro:muv} Under assumptions of Theorem \ref{th:mmh}, let $u$ be a weak solution of $H(\textbf{a},V)u=0$ in $4Q$, for all $s>2$, there exists a constant $C>0$ such that %for tout cube $Q=Q (x_{0}, R)$ et toute solution faible de $H(\textbf{a},0)u=0$ dans $4Q=Q (x_{0}, 4R)$,
 \begin{equation}\label{muvv}
 \big{(} \avert{Q}| m(.,|B|)^{2} u |^{s} \big{)}^{1/s}\leq C \big{(} \avert{3Q}| m(.,|B|)^{2} u |^{2} \big{)}^{1/2}.
 \end{equation}

 \end{pro}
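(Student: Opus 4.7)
The plan is to mirror the proof of Proposition \ref{pro:mu}: the only ingredient that really changes is the pointwise decay bound on $u$, which in the pure magnetic case was Lemma \ref{lemm:sol} and is now available in the presence of $V$ via the newly established Lemma \ref{lemm:solV}. The extra power of $m(\cdot,|B|)$ on the left of \eqref{muvv}, compared to \eqref{pro:mu}, will be absorbed by taking the free parameter $k$ in Lemma \ref{lemm:solV} larger.

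Let $x_0$ denote the center of $Q$ and $R$ its sidelength, and set $\lambda := R\,m(x_0,|B|)$. For each $y \in Q$ the cube $Q(y,R/2)$ is contained in $4Q$, so Lemma \ref{lemm:solV} applies at $y$ and yields, for any prescribed $k>0$,
$$
|u(y)| \le \frac{C_k}{\{1+R\,m(y,|B|)\}^k}\,\bigl(\avert{3Q}|u|^2\bigr)^{1/2},
$$
after absorbing the local cube into $3Q$. Now Lemma \ref{th:mprop}(3) gives $m(y,|B|)^2 \le C\{1+\lambda\}^{2k_0}\, m(x_0,|B|)^2$ on $Q$, while Lemma \ref{th:mprop}(4), treating the cases $\lambda\le 1$ and $\lambda>1$ separately, gives $\{1+R\,m(y,|B|)\}^k \ge C\{1+\lambda\}^{k/(k_0+1)}$. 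Multiplying through by $m(y,|B|)^2$ yields the pointwise bound
$$
m(y,|B|)^2 |u(y)| \le C_k\{1+\lambda\}^{2k_0 - k/(k_0+1)}\, m(x_0,|B|)^2 \bigl(\avert{3Q}|u|^2\bigr)^{1/2},
$$
which survives the $L^s$-average on $Q$.

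It remains to convert $m(x_0,|B|)^2 \bigl(\avert{3Q}|u|^2\bigr)^{1/2}$ into the desired right-hand side of \eqref{muvv}. Applying Lemma \ref{th:mprop}(4) now on $3Q$ gives $m(z,|B|) \ge c\,m(x_0,|B|)/\{1+\lambda\}^{k_0/(k_0+1)}$ for all $z\in 3Q$, hence
$$
m(x_0,|B|)^2 \bigl(\avert{3Q}|u|^2\bigr)^{1/2} \le C\{1+\lambda\}^{2k_0/(k_0+1)} \bigl(\avert{3Q}|m(\cdot,|B|)^2 u|^2\bigr)^{1/2}.
$$
Choosing $k$ large enough that $2k_0 + 2k_0/(k_0+1) - k/(k_0+1) \le 0$ eliminates all remaining $\{1+\lambda\}$-factors and establishes \eqref{muvv}.

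The main difficulty is purely bookkeeping: one must carefully track the powers of $\{1+\lambda\}$ that accumulate from each application of Lemma \ref{th:mprop} as $m(\cdot,|B|)^2$ is pulled in and out of averages, and verify that the polynomial decay from Lemma \ref{lemm:solV} is strong enough to absorb all of them. Beyond Lemma \ref{lemm:solV} itself, no new ideas relative to the argument of Proposition \ref{pro:mu} are required.
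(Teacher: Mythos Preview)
Your proof is correct and follows exactly the route the paper intends: the paper says only that the proof is ``similar to that of Proposition \ref{pro:mu}'', and your argument is precisely that proof with Lemma \ref{lemm:solV} replacing Lemma \ref{lemm:sol} and the exponents on $m(\cdot,|B|)$ doubled. Your bookkeeping of the $\{1+\lambda\}$-powers is in fact more careful than the paper's own proof of Proposition \ref{pro:mu}, which glosses over the conversion from $m(y,|B|)$ back to $m(x_0,|B|)$ when applying the decay lemma at a variable point.
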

the proof is similar to that of Proposition \ref{pro:mu}.
\\
\\
  \textit{Proof of Theorem \ref{th:mmh}:}\\
 We have  $$m(.,|B|)^{2}H(\textbf{a},V)^{-1}=m(.,|B|)^{2}H(\textbf{a},0)^{-1}H(\textbf{a},0)H(\textbf{a},V)^{-1}.$$
  It follows by Theorem \ref{2th:VH} that $H(\textbf{a},0)H(\textbf{a},V)^{-1}$ is $L^p$ bounded for $1\leq p<q+\epsilon$. We know also that $m(.,|B|)^{2}H(\textbf{a},0)^{-1}$ is $L^p$ bounded for $1<p<\infty$. Hence $m(.,|B|)^{2}H(\textbf{a},V)^{-1}$ is bounded on $L^{p}(\mathbb{R}^{n})$ for all $1<p<q+\epsilon$. In particular it is  $L^{2}$ bounded.
Then we apply Theorem \ref{2theor:shen} to study the behaviour of this operator on $ L ^{p} (\mathbb{R} ^{n}) $.   
Fix a cube $Q$ and let $f\in C^{\infty}_{0}(\mathbb{R}^n,\mathbb{C})$ compact support contained in $\mathbb{R}^{n} \setminus 4Q$. Then $u=H(\textbf{a},V)^{-1}f$ is well defined on $\mathbb{R}^n$. Due to the support conditions on $f$, $u$ is a weak solution of $H(\textbf{a},V)u=0$ on $4Q$. It follows by Proposition \ref{pro:muv} that, for all $s>2$, there is a constant $C$, independant of $Q$ and $\textbf{F}$, such that 
 \begin{equation}
  \big{(} \avert{Q}| m(.,|B|)^{2}H(\textbf{a},V)^{-1}f |^{s} \big{)}^{1/s}\leq C \big{(} \avert{3Q}| m(.,|B|)^{2} H(\textbf{a},V)^{-1}f|^{2}\big{)}^{1/2}.
  \end{equation}
  Then \eqref{2T:shen} holds with $T=m(.,|B|)^{2}H(\textbf{a},V)^{-1},\, q_{0}=s,\, p_{0}=2$ and $T$ is $L^p$ bounded by Theorem \ref{2theor:shen}.$\square$

 \begin{rem}Note that we can prove Corollary \ref{cormu} by a proof analogous to that of Theorem \ref{th:mmh}. In fact, under hypothesies of Corollary \ref{cormu}, if $u$ is a weak solution of $H(\textbf{a},V)u=0$ in the cube $4Q$ centred at $x_{0} \in \mathbb{R}^{n}$ of sedelength $4R$. Then, for all $s>2$, there exists a constant $C>0$ such that
 
  \begin{equation}
 \big{(} \avert{Q}| m(.,|B|) u |^{s} \big{)}^{1/s}\leq C \big{(} \avert{3Q}| m(.,|B|)^{2} u |^{2} \big{)}^{1/2}.
 \end{equation}
 \end{rem}

  \space
  \space

 \textit{Proof of Theorem \ref{th:2a}: }\\
  	
We know that for $ p\leq $ 2 and without conditions on $ V $ operators $ LH (\textbf{a}, V) ^ {-1 / 2} $ and $ V ^ {1 / 2} H (\textbf{a}, V) ^ {-1 / 2}$ are $ L ^ p$ bounded .	
We would therefore limit ourselves to cases where $p>2$.

 The following lemma allows the reduction of the problem.
 \begin{lemm} Under the assumptions of Theorem \ref{th:2a},  $L H(\textbf{a},V)^{-1/2}$ is $L^p$ bounded if and only if $L H(\textbf{a},V)^{-1} L^{\star}$ and $L H(\textbf{a},V)^{-1}\,V^{1/2}$ are $L^{p}$ bounded.
 \end{lemm}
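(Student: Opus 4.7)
The plan is to follow the two-directional argument of Lemma~\ref{red}, accommodating the additional $V^{1/2}$ term arising from the reverse inequality \eqref{eq:rev Lp'} of Theorem~\ref{th:2b}. We work with $p>2$ (the case $p\le 2$ being known) and its dual exponent $p'$.

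For the direct implication, suppose $LH(\textbf{a},V)^{-1/2}$ is bounded on $L^p$. The weak-$(1,1)$ result of Sikora \cite{Sik} (or Duong--Ouhabaz--Yan) combined with the $L^2$-boundedness and Marcinkiewicz interpolation yields $L^{p'}$-boundedness of $LH(\textbf{a},V)^{-1/2}$ for $1<p'\le 2$; by duality, $H(\textbf{a},V)^{-1/2}L^{\star}$ is bounded on $L^p$, and composing with the hypothesis gives $LH(\textbf{a},V)^{-1}L^{\star}=\bigl(LH(\textbf{a},V)^{-1/2}\bigr)\bigl(H(\textbf{a},V)^{-1/2}L^{\star}\bigr)$ bounded on $L^p$. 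For the second operator, Corollary~\ref{2th:VH2} gives $V^{1/2}H(\textbf{a},V)^{-1/2}$ bounded on $L^{p'}$ (since $p'<2<2q+\epsilon$); dualizing, $H(\textbf{a},V)^{-1/2}V^{1/2}$ is $L^p$-bounded, and composing again yields $LH(\textbf{a},V)^{-1}V^{1/2}$ bounded on $L^p$.

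For the converse, by duality it suffices to show $H(\textbf{a},V)^{-1/2}L^{\star}$ is bounded on $L^{p'}$. For $\textbf{F}\in C_0^\infty(\mathbb{R}^n,\mathbb{C}^n)$, set $u=H(\textbf{a},V)^{-1}L^{\star}\textbf{F}$, so that $H(\textbf{a},V)^{1/2}u=H(\textbf{a},V)^{-1/2}L^{\star}\textbf{F}$. The reverse inequality of Theorem~\ref{th:2b} in its $m(.,|B|)$-form (valid under \eqref{eq:shen} by the subsequent Remark) yields
\[
\|H(\textbf{a},V)^{-1/2}L^{\star}\textbf{F}\|_{p'}\lesssim \|LH(\textbf{a},V)^{-1}L^{\star}\textbf{F}\|_{p'}+\|m(.,|B|)H(\textbf{a},V)^{-1}L^{\star}\textbf{F}\|_{p'}+\|V^{1/2}H(\textbf{a},V)^{-1}L^{\star}\textbf{F}\|_{p'}.
\]
The first term is $\lesssim\|\textbf{F}\|_{p'}$ because $LH(\textbf{a},V)^{-1}L^{\star}$ is formally self-adjoint as a vector-valued operator, so its $L^p$-boundedness transfers to $L^{p'}$; the third term is $\lesssim\|\textbf{F}\|_{p'}$ because $V^{1/2}H(\textbf{a},V)^{-1}L^{\star}$ is the adjoint of the hypothesized $LH(\textbf{a},V)^{-1}V^{1/2}$.

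The hard part is controlling the middle term $\|m(.,|B|)H(\textbf{a},V)^{-1}L^{\star}\textbf{F}\|_{p'}$: the naive factorization $\bigl(m(.,|B|)H(\textbf{a},V)^{-1/2}\bigr)\bigl(H(\textbf{a},V)^{-1/2}L^{\star}\bigr)$ is circular, since the second factor on $L^{p'}$ (for $p'<2$) is precisely what we are trying to bound. The plan to break the circularity is to use the resolvent identity
\[
H(\textbf{a},V)^{-1}L^{\star}=H(\textbf{a},0)^{-1}L^{\star}-H(\textbf{a},0)^{-1}V^{1/2}\cdot V^{1/2}H(\textbf{a},V)^{-1}L^{\star},
\]
which reduces the middle term to $\|m(.,|B|)H(\textbf{a},0)^{-1}L^{\star}\textbf{F}\|_{p'}$, controlled by Proposition~\ref{2thC'}, plus the composition of $m(.,|B|)H(\textbf{a},0)^{-1}V^{1/2}$ with $V^{1/2}H(\textbf{a},V)^{-1}L^{\star}\textbf{F}$; the latter is already estimated via the hypothesis on $LH(\textbf{a},V)^{-1}V^{1/2}$, and the former is split as $\bigl(m(.,|B|)H(\textbf{a},0)^{-1/2}\bigr)\bigl(H(\textbf{a},0)^{-1/2}V^{1/2}\bigr)$ and handled through Theorem~\ref{th:mH} together with a Kato--Simon domination that reduces an $L^{p'}$-bound on $V^{1/2}H(\textbf{a},0)^{-1/2}$ to the $RH_q$-dependent estimates on $V^{1/2}(-\Delta)^{-1/2}$ established in \cite{AB}.
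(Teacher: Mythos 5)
Your proof correctly reproduces the paper's skeleton (the paper only says the proof is ``similar to that of Lemma~\ref{red}''): the direct implication via composition and duality is fine, and in the converse you apply the $m(\cdot,|B|)$-form of the reverse inequality \eqref{eq:rev Lp'}, dispatch the $\|LH^{-1}L^{\star}\textbf{F}\|_{p'}$ term by self-adjointness, and the $\|V^{1/2}H^{-1}L^{\star}\textbf{F}\|_{p'}$ term by dualizing the second hypothesis. You also correctly single out the middle term $\|m(\cdot,|B|)H(\textbf{a},V)^{-1}L^{\star}\textbf{F}\|_{p'}$ as the genuine obstruction: for $p'<2$ the factorization through $H(\textbf{a},V)^{-1/2}L^{\star}$ is circular, and the two operators named in the hypothesis do not control it directly. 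This diagnosis is sharp and is precisely what the paper's terse ``similar to Lemma~\ref{red}'' glosses over, since Lemma~\ref{red} explicitly included $L\,H(\textbf{a},0)^{-1}m(\cdot,|B|)$ among its hypotheses while the present lemma does not.

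However, the proposed resolution does not close the gap. After the resolvent identity $H(\textbf{a},V)^{-1}=H(\textbf{a},0)^{-1}-H(\textbf{a},0)^{-1}VH(\textbf{a},V)^{-1}$ and the split of $m(\cdot,|B|)H(\textbf{a},0)^{-1}V^{1/2}$ into $\bigl(m(\cdot,|B|)H(\textbf{a},0)^{-1/2}\bigr)\bigl(H(\textbf{a},0)^{-1/2}V^{1/2}\bigr)$, you need the second factor to be bounded on $L^{p'}$, i.e.\ (by duality) $V^{1/2}H(\textbf{a},0)^{-1/2}$ bounded on $L^{p}$. This operator is \emph{not} $L^p$-bounded for a general $V\in RH_q$: Kato--Simon does give the pointwise domination $|V^{1/2}H(\textbf{a},0)^{-1/2}f|\le V^{1/2}(-\Delta)^{-1/2}|f|$, but $V^{1/2}(-\Delta)^{-1/2}$ is the composition of multiplication by $V^{1/2}$ with the Riesz potential $I_1$, which fails to map $L^p$ into $L^p$. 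Already $V\equiv 1\in RH_\infty$ is a counterexample: $(-\Delta)^{-1/2}=I_1$ sends $L^p$ to $L^{p^*}$, not to $L^p$. The result from \cite{AB} you invoke concerns $V^{1/2}(-\Delta+V)^{-1/2}$, where the potential inside the resolvent supplies the low-frequency damping that makes the operator bounded; dropping $V$ from the resolvent destroys that mechanism, so the cited estimate does not transfer. Consequently the middle term remains uncontrolled on $L^{p'}$ for $p'<2$, and the converse direction is not established by this route. To make the argument honest one either has to prove independently that $m(\cdot,|B|)H(\textbf{a},V)^{-1}L^{\star}$ is bounded on $L^{p'}$ for small $p'$ (e.g.\ by a weak-$(1,1)$ argument together with interpolation against the $L^2$ bound), or to state the lemma, as in Lemma~\ref{red}, with the additional hypothesis that $L\,H(\textbf{a},V)^{-1}m(\cdot,|B|)$ is bounded on $L^p$ and then verify that hypothesis in the range at stake.
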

  The proof of this lemma is similar to that of Lemma \ref{red}. 	
%Remember that the presence of the term in $ m (. | B |) u $ inequality in H \ "Unlike older Lu $ $, has no impact on the evidence, since there was evidence that trader $m(.,|B|)H(\textbf{a},V)^{-\frac{1}{2}}$ est $L^p$ borné for $1<p<\infty$.
	
We also use the following results:
 \begin{pro} \label{2thC''} Assume $V \in RH_q$ with $1<q\leq \infty$, then there is an $\epsilon >0$ such that for all $p$ with $2<p< 2(q+\epsilon)$, there exists a constant $C_p$ depending on $V$, such that $f\in C^\infty_{0}(\mathbb{R}^n,\mathbb{C})$ and $\textbf{F}\in C_{0}^\infty(\mathbb{R}^n, \mathbb{C}^n)$, 
$$
\|V^{1/2} H(\textbf{a},V)^{-1} V^{1/2}f\|_p  \le C_{p} \|f\|_{p}, \quad  \|V^{1/2} H(\textbf{a},V)^{-1} L^{\star} \textbf{F}\|_p \le C_p\|\textbf{F}\|_p.
$$
\end{pro}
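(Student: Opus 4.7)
The plan is to factor both operators through $H(\textbf{a},V)^{-1/2}$ and then combine Corollary \ref{2th:VH2} with duality. Since $H(\textbf{a},V)$ is nonnegative self-adjoint on $L^2$, one has the identity $H(\textbf{a},V)^{-1} = H(\textbf{a},V)^{-1/2} \circ H(\textbf{a},V)^{-1/2}$, which gives the factorizations
\begin{equation*}
V^{1/2} H(\textbf{a},V)^{-1} V^{1/2} = \bigl(V^{1/2} H(\textbf{a},V)^{-1/2}\bigr) \circ \bigl(H(\textbf{a},V)^{-1/2} V^{1/2}\bigr),
\end{equation*}
\begin{equation*}
V^{1/2} H(\textbf{a},V)^{-1} L^{\star} = \bigl(V^{1/2} H(\textbf{a},V)^{-1/2}\bigr) \circ \bigl(H(\textbf{a},V)^{-1/2} L^{\star}\bigr).
\end{equation*}
Both factorizations make sense on test functions since $V^{1/2}f,\, L^{\star}\textbf{F} \in L^2$ for $f \in C_0^\infty(\mathbb{R}^n)$ and $\textbf{F}\in C_0^\infty(\mathbb{R}^n,\mathbb{C}^n)$.

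For the first estimate, Corollary \ref{2th:VH2} furnishes the $L^p$ boundedness of $V^{1/2} H(\textbf{a},V)^{-1/2}$ on the range $1<p<2q+\epsilon$. Its $L^2$-adjoint is exactly $H(\textbf{a},V)^{-1/2} V^{1/2}$, since both elementary factors are self-adjoint on $L^2$; by standard duality this adjoint is then $L^p$ bounded whenever $(2q+\epsilon)'<p<\infty$, a range which contains every $p\ge 2$ as soon as $\epsilon$ is small. Composing the two bounded factors gives the desired estimate on the intersection, which contains the interval $2<p<2(q+\epsilon)$ (up to renaming $\epsilon$).

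For the second estimate, I would use the Sikora / Duong--Ouhabaz--Yan result recalled in the introduction: $L_j H(\textbf{a},V)^{-1/2}$ is of weak type $(1,1)$ and therefore $L^p$ bounded for all $1<p\le 2$, with no condition on $V$ beyond $V\ge 0$. Duality then yields the $L^p$ boundedness of $H(\textbf{a},V)^{-1/2} L^{\star}$ for every $2\le p<\infty$. Combining this with the range $1<p<2q+\epsilon$ from Corollary \ref{2th:VH2} for $V^{1/2} H(\textbf{a},V)^{-1/2}$, the composition is $L^p$ bounded on $2\le p<2q+\epsilon$, which again contains $2<p<2(q+\epsilon)$. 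The only real point requiring care (but not difficulty) is justifying the $L^p$-duality and the compatibility of the two factors on $C_0^\infty$, which is routine using the density of $C_0^\infty$ in $L^p\cap L^2$ and the fact that both factors agree with their $L^2$-adjoints on this common dense subspace.
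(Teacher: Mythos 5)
Your argument is correct, but it is not the route the paper takes. The paper proves Proposition \ref{2thC''} \emph{directly} by the same subharmonicity/Shen-criterion scheme used throughout Section 4: for $f$ supported away from $4Q$, the function $u=H(\textbf{a},V)^{-1}V^{1/2}f$ is a weak solution of $H(\textbf{a},V)u=0$ on $4Q$, so $|u|^2$ is subharmonic there (Lemma \ref{2subh}); applying the weighted mean-value inequality for subharmonic functions (Lemma \ref{2th:sousharm}) with $\omega=V^{1/2}\in RH_{2r}$ for some $r$ slightly larger than $q$ yields the self-improving reverse H\"older estimate $\big(\avert{Q}(V^{1/2}|u|)^{2r}\big)^{1/2r}\le C\avert{\mu Q}V^{1/2}|u|$, and then Theorem \ref{2theor:shen} with $p_0=2$, $q_0=2r$ and $S=0$ gives $L^p$-boundedness for $2<p<2r$. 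Your proof instead factorises $V^{1/2}H^{-1}V^{1/2}$ and $V^{1/2}H^{-1}L^{\star}$ through $H^{-1/2}$ and combines Corollary \ref{2th:VH2} (itself deduced from Theorem \ref{2th:VH} by complex interpolation) with $L^2$-duality and the Sikora/Duong--Ouhabaz--Yan bound $1<p\le 2$ for $LH^{-1/2}$. This is logically sound — there is no circularity, since Corollary \ref{2th:VH2} is already available at this point — and it is shorter. What it costs is that it quietly re-invokes the complex-interpolation/imaginary-powers machinery hidden inside Corollary \ref{2th:VH2}, whereas the paper's proof is a self-contained application of the subharmonicity lemma and the good-$\lambda$ criterion, making it uniform in style with Theorem \ref{2th:VH}, Proposition \ref{thD''}, etc. One small point to make explicit in your version: for $f\in C_0^\infty$ and $2<p<2q+\epsilon$ one indeed has $V^{1/2}f\in L^p\cap L^2$ (since $V\in RH_q$ gives $V^{1/2}\in L^{2q}_{loc}$), so the factorisation and the density argument are legitimate; you wave at this as routine, which is fine, but it is the only place where the $RH_q$ hypothesis enters beyond what is packaged in Corollary \ref{2th:VH2}.
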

\begin{proof}

% We shall use  a result by Shen \cite{Shen1} (see \cite{AM1} for this version).
%

 Fix a cube $Q$ in $\mathbb{R}^{n}$ and let $f\in C^\infty_{0}(\mathbb{R}^n)$ supported away from $4Q$. Then $u=H(\textbf{a},V)^{-1}V^{1/2}f$ is well-defined on $\mathbb{R}^n$ with
$\|V^{1/2} u\|_{2}+ \|L u \|_{2} \le \|f\|_{2}$, by construction of $H(\textbf{a},V)$ and
$$
\int_{\mathbb{R}^n} V u \varphi + \nabla u \cdot \nabla \varphi = \int_{\mathbb{R}^n} V^{1/2} f \varphi
$$ for all $\varphi\in L^2$ with $\|V^{1/2} \varphi\|_{2}+ \|\nabla \varphi \|_{2}<\infty$.
In particular, the support condition on $f$ implies that $u$ is a weak solution of $H(\textbf{a},V)u=0$ in $4Q$, hence $|u|^2$ is subharmonic on $4Q$. Consider $r$ such that $V\in RH_{r}$ and note that $V^{1/2} \in RH_{2r}$. By Lemma  \ref{2th:sousharm} with $\omega=V^{1/2}$ $f=|u|^2$ and  $s=1/2$, we have
$$
\big(\avert {Q} (V^{1/2}|u| )^{2r} \big)^{1/2r} \le C \, \avert {\mu Q} (V^{1/2} |u|).
$$
Hence \eqref{2T:shen} holds with $T=V^{1/2}H(\textbf{a},V)^{-1} V^{1/2}$, $q_{0}=2r$, $p_{0}=2$ and $S=0$. By Theorem \ref{2theor:shen}, 
$V^{1/2}H^{-1} V^{1/2}$ is then $L^p$ bounded for $2<p<2r$. 

We use the same argument to obtain that $V^{1/2}H(\textbf{a},V)^{-1}\,L^{*}$ is $L^p$ bounded for $2<p<2r$.

\end{proof}
	
To prove Theorem \ref{th:2a}, it suffices to prove the following result:
\begin{pro} \label{thD''}
Assume $V \in RH_q$ with $q>1$. 
If $2<p<  q^{*}+\epsilon$ for an $\epsilon>0$ which depends on the $RH_{q}$ constant of $V$, then for all $f\in C^\infty_{0}(\mathbb{R}^n,\mathbb{C})$ and $\textbf{F}\in C_{0}^\infty(\mathbb{R}^n, \mathbb{C}^n)$, 
$$
\|L  H(\textbf{a},V)^{-1} V^{1/2}f\|_p  \le C_{p} \|f\|_{p}, \quad  \|L H(\textbf{a},V)^{-1} L^{\star} \textbf{F}\|_p \le C_p\|\textbf{F}\|_p.
$$
\end{pro}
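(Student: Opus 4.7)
The plan is to apply the Shen criterion (Theorem \ref{2theor:shen}) to each of $T=LH(\mathbf{a},V)^{-1}L^\star$ and $T=LH(\mathbf{a},V)^{-1}V^{1/2}$. Both are $L^2$ bounded: by \eqref{LL2} and duality, $LH^{-1/2}$, $H^{-1/2}L^\star$, $V^{1/2}H^{-1/2}$ and $H^{-1/2}V^{1/2}$ are bounded on $L^2$, so $T$ factors through $L^2$. The reverse Hölder estimates from Propositions \ref{lemm:5}--\ref{lemm:6} will supply the off-diagonal condition \eqref{2T:shen}, while Corollary \ref{cormu} and Proposition \ref{2thC''} will provide the $L^p$ bounds on the auxiliary operator $S$.

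I would fix a cube $Q$ and take $\mathbf{F}\in C_0^\infty(\mathbb{R}^n,\mathbb{C}^n)$ supported in $\mathbb{R}^n\setminus 4Q$, so that $u=H(\mathbf{a},V)^{-1}L^\star\mathbf{F}$ is a weak solution of $H(\mathbf{a},V)u=0$ in $4Q$. For the first bootstrap, Proposition \ref{lemm:5} gives
\begin{equation*}
\big(\avert{Q}|Lu|^{\tilde q}\big)^{1/\tilde q}\le C\big(\avert{\mu Q}|Lu|^{2}+|m(\cdot,|B|)u|^{2}+V|u|^{2}\big)^{1/2},\qquad \tilde q=\inf(q^*,2q),
\end{equation*}
which matches \eqref{2T:shen} with $T\mathbf{F}=Lu$, $p_0=2$, $q_0=\tilde q$ and
\begin{equation*}
S\mathbf{F}=\bigl(M|m(\cdot,|B|)H(\mathbf{a},V)^{-1}L^\star\mathbf{F}|^{2}\bigr)^{1/2}+\bigl(M|V^{1/2}H(\mathbf{a},V)^{-1}L^\star\mathbf{F}|^{2}\bigr)^{1/2}.
\end{equation*}
Factoring $m(\cdot,|B|)H^{-1}L^\star=[m(\cdot,|B|)H^{-1/2}]\circ[H^{-1/2}L^\star]$, Corollary \ref{cormu} together with the dual of $LH^{-1/2}$ (which is $L^{p'}$ bounded for $p'\le 2$) shows boundedness on $L^p$ for $p\ge 2$; Proposition \ref{2thC''} handles $V^{1/2}H^{-1}L^\star$ for $2<p<2q+\epsilon$. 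Thus $S$ is $L^p$ bounded for $2<p<2q+\epsilon$, and Theorem \ref{2theor:shen} gives $T$ bounded on $L^p$ for $2<p<\tilde q$.

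When $q\ge n/2$, so that $\tilde q=2q<q^*$, I would then bootstrap: Proposition \ref{lemm:6} yields
\begin{equation*}
\big(\avert{Q}|Lu|^{q^*}\big)^{1/q^*}\le C\big(\avert{\mu Q}|Lu|^{2q}+|m(\cdot,|B|)u|^{2q}\big)^{1/2q},
\end{equation*}
to which Theorem \ref{2theor:shen} is applied with $p_0=2q-\delta$ (the previous step yielding $T$ $L^{p_0}$ bounded for arbitrarily small $\delta>0$), $q_0=q^*$, and $S\mathbf{F}=(M|m(\cdot,|B|)H^{-1}L^\star\mathbf{F}|^{p_0})^{1/p_0}$, which is $L^p$ bounded for $p>p_0$. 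This delivers $T$ on $L^p$ for $p_0<p<q^*$, and, combined with the first step, for $2<p<q^*$. The additional $\epsilon$ in $p<q^*+\epsilon$ is obtained by the self-improvement $V\in RH_{q+\eta}$, which promotes $q^*$ to $(q+\eta)^*$. The operator $LH^{-1}V^{1/2}$ is handled identically, with $V^{1/2}$ replacing $L^\star$ on the input side and Proposition \ref{2thC''} applying verbatim. The main difficulty is that the term $V|u|^2$ on the right of Proposition \ref{lemm:5} limits the first bootstrap to $\tilde q\le 2q+\epsilon$, strictly below $q^*$ when $q\ge n/2$; only after this intermediate $L^{2q}$ step does Proposition \ref{lemm:6}---in which $V|u|^2$ has been absorbed into $|m(\cdot,|B|)u|^{2q}$---allow Corollary \ref{cormu} to push the range up to $q^*+\epsilon$.
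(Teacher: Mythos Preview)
Your approach matches the paper's: apply Theorem \ref{2theor:shen} with the reverse H\"older estimate of Proposition \ref{lemm:5} as a first pass, then bootstrap via Proposition \ref{lemm:6} when $q\ge n/2$, using Corollary \ref{cormu} and Proposition \ref{2thC''} to control the auxiliary operator $S$.

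There is one technical point to flag in your bootstrap. You set $p_{0}=2q-\delta$, but the inequality you quote from Proposition \ref{lemm:6} carries an $L^{2q}$ average of $Lu$ on the right, whereas the hypothesis \eqref{2T:shen} of Theorem \ref{2theor:shen} requires the $T\mathbf{F}$ term to appear with an $L^{p_{0}}$ average; an $L^{2q}$ average is not dominated by an $L^{2q-\delta}$ average, so as written \eqref{2T:shen} is not verified with your choice of $p_{0}$. The paper handles this by taking $p_{0}=2q$ exactly: invoking the self-improvement $V\in RH_{q+\eta}$ already at the first stage (rather than only at the very end) pushes the range of the first step to $2<p<2(q+\eta)$, so $T$ is genuinely bounded on $L^{2q}$ and $p_{0}=2q$ becomes admissible in the second application of Theorem \ref{2theor:shen}. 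Equivalently, since $RH_{q}\subset RH_{q'}$ for $q'<q$, you can apply Proposition \ref{lemm:6} with $q$ replaced by $q-\delta'$ to obtain $L^{2q-2\delta'}$ averages on the right and $q_{0}=(q-\delta')^{*}$, then let $\delta'\to 0$ before using self-improvement for the final $\epsilon$. With this small adjustment your argument goes through and coincides with the paper's.
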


\begin{proof}
Assume $q<n/2$. 
 Fix a cube $Q$ and let  $\textbf{F}\in C^{\infty}_{0}(\mathbb{R}^n,\mathbb{C}^n)$ supported away from $4Q$. Set $H=H(\textbf{a},V)$, $u=H^{-1} L^{\star}\textbf{F}$ is well-defined on $\mathbb{R}^n$. As before,  the support condition on  $\textbf{F}$, implies that $u$ is a weak solution of $Hu=0$ on $4Q$. Lemma \ref{lemm:5} implies for all $p\leq q^{\star}$
 \begin{equation}
  \big{(} \avert{Q}| LH^{-1}L^{\star} \textbf{F}|^{p}dx \big{)}^{1/p}\leq C \big{(} \avert{3Q}| L H^{-1} L^{\star}\textbf{F}|^{2}+| m(.,|B|)H^{-1} L^{\star}\textbf{F} |^{2}+|V^{1/2}H^{-1}L^{\star}\textbf{F}|^{2} \big{)}^{1/2}.
  \end{equation}
  Then \eqref{2T:shen} holds with $$T=LH^{-1}L^{\star},\, q_{0}=q^{\star},\, \,  p_{0}=2 \,\, \textrm{and}\,\, S \textbf{F}=\big(M\big(m(.,|B|)H^{-1} L^{\star}\textbf{F}+V^{1/2}H^{-1}L^{\star}\textbf{F}\textbf)^{2}\big)^{\frac{1}{2}},$$
  where $M$ is the maximal Hardy-Littlewood operator. Since S is $L^p$ bounded for all $1<p< 2q$  and $q^{\star}\leq 2q$, then $T$ is bounded on $L^p(\mathbf{R}^{n},\mathbf{C}^{n})$, $p< q^{\star}$. By the self-improvement of reverse H\"older estimates we can replace $q$ by a slightly larger value and, therefore, $L^p$ boundedness  for $p< q^*+\epsilon$ holds. $\footnote{Thanks to Theorem \ref{th:Lm}, we can improve the range of $p$: $1<p< 2q+\epsilon$.}$

Assume next that $ n/2 \le q < n$, then $q^{\star}\geq 2q$. We follow the same argument used for $p<n/2$, and we obtain first that $LH^{-1}L^{\star}$ is $L^p$ bounded for $q\leq 2q$. \\We can improve this result by Lemma \ref{lemm:6}: in fact, inequality \eqref{2T:shen} holds with $T=LH^{-1}L^{\star},\, q_{0}=q^{\star},\, p_{0}=2q \, \,\textrm{and}\, S=M\big(|m(.,|B|)H^{-1} L^{\star}|^{2}\big)^{\frac{1}{2}}$. Since $S$ is $L^p$ bounded for all $1<p<\infty$ then $T$ is bounded on $L^p(\mathbf{R}^{n},\mathbf{C}^{n})$, $p< q^{\star}$. Again, by self-improvement of the $RH_{q}$ condition, it holds for $p<q^{\star} +\epsilon$.
 
Finally, if $q\ge n$, then $L H^{-1} V^{1/2}$ is $L^p$ bounded for $2<p < \infty$. And this ends the proof.
\end{proof}

\end{document}